\documentclass[12pt,a4paper]{amsart}
\usepackage{geometry}
\usepackage{hyperref,color}
\allowdisplaybreaks
\usepackage{enumerate,pstricks,amssymb}
%\usepackage[notcite]{showkeys}  % add option [notref] to reduce keys in margin.

% Theorem like environments, with consecutive numbering per section
\newtheorem{theorem}{Theorem}
\newtheorem*{maintheorem*}{Main Theorem}
\newtheorem{lemma}[theorem]{Lemma}
\newtheorem{prop}[theorem]{Proposition}
\newtheorem{cor}[theorem]{Corollary}
\newtheorem{definition}[theorem]{Definition}
\theoremstyle{remark}
\newtheorem{remark}[theorem]{Remark}

\numberwithin{equation}{section}
\numberwithin{theorem}{section}

\newcommand{\lip}{\left\langle}
\newcommand{\rip}{\right\rangle}

\newcommand{\Der}[1]{\mathcal{D}(\Lie{#1})}
\newcommand{\SpinDer}[1]{\mathcal{R}(\Lie{#1})}
\newcommand{\SymDer}[1]{\mathcal{D}^{\mathrm{sym}}(\Lie{#1})}
\newcommand{\SkewDer}[1]{\mathcal{D}^{\mathrm{skew}}(\Lie{#1})}
\newcommand{\SymDerzero}[1]{\mathcal{D}^{\mathrm{sym}}_0(\Lie{#1})}
\newcommand{\SkewDerzero}[1]{\mathcal{D}^{\mathrm{skew}}_0(\Lie{#1})}
\newcommand{\Lie}[1]{\mathfrak{#1}}

\newcommand{\transp}{^{\mathsf{T}}}

\newcommand{\R}{\mathbb{R}}
\newcommand{\Z}{\mathbb{Z}}

\newcommand{\ad}{\operatorname{ad}}

\newcommand{\rspan}{\operatorname{span}}

\def \and {\quad\text{and}\quad}
\def\rist {\vert_}

\DeclareMathOperator{\height}{height}

\DeclareMathOperator{\End}{End}
\DeclareMathOperator{\Hom}{Hom}

\newcommand{\Epsilon}{\mathrm{E}}

\newcommand{\subsubsubsection}[1]{\noindent{\textit{#1}}\enspace}

\title[Derivations of Lie algebras]{On derivations of subalgebras \\ of real semisimple Lie algebras }

\author{Paolo Ciatti}
\address{%Dipartimento di Matematica \\
Universit\`a di Padova\\
Via Trieste 63\\
35121 Padova\\
ITALY}
\email {paolo.ciatti@unipd.it}

\author{Michael G.~Cowling}
\address{%School of Mathematics and Statistics\\
University of New South Wales\\
UNSW Sydney 2052\\
AUSTRALIA}
\email {m.cowling@unsw.edu.au}

\thanks{Research supported by the Italian \emph{Ministero dell'Istruzione, dell'Universit\`a e della Ricerca} through PRIN 2011--12 \emph{``Variet\`a reali e complesse: geometria, topologia e analisi armonica''} and the GNAMPA project \emph{``Calcolo funzionale per operatori subellitici su variet\`a''}, and by the Australian Research Council through DP-140100531 ``Generalised conformal mappings''.
The first-named author thanks the University of New South Wales and the second-named author thanks the \emph{Universit\`a di Padova} for hospitality.  }

\subjclass[2000]{17B40;  22E60,  30L10}

\begin{document}
\begin{abstract}
Let $\mathfrak{g}$ be a real semisimple Lie algebra with Iwasawa decomposition $\mathfrak{k} \oplus \mathfrak{a} \oplus \mathfrak{n}$.
We show that, except for some explicit exceptional cases, every derivation of the nilpotent subalgebra $\mathfrak{n}$ that preserves its restricted root space decomposition is of the form $\text{ad}( W)$, where $W \in \mathfrak{m}\oplus \mathfrak{a}$.
\end{abstract}
\maketitle
\section{Introduction}
Let $\Lie{k} \oplus \Lie{a} \oplus \Lie{n}$ be an Iwasawa decomposition of a real semisimple Lie algebra $\Lie{g}$ (here and later, $\oplus$ denotes a vector space direct sum; in general, the summands need not be Lie algebras), and let $\Lie{m}$ be the centraliser of $\Lie{a}$ in $\Lie{k}$.
We study the Lie algebra of derivations of the nilpotent subalgebra $\Lie{n}$ that preserve its restricted root space decomposition.
We show that every such derivation is of the form $\ad(W)$, where $W \in \Lie{m} \oplus \Lie{a}$, unless $\Lie{g}$ contains a simple summand of the form $\Lie{so}(n,1)$ or $\Lie{su}(n,1)$.
These derivations are known for real rank one simple Lie algebras.
Indeed, Kor\'anyi \cite{Ko} showed that in that case $\Lie{n}$ is an $H$-type Lie algebra, and the Lie algebra of derivations and the automorphism group of an $H$-type algebra were found by Riehm \cite{R} and Saal \cite{S}.
%For the general case, we first reduce to the case where $\Lie{g}$ is simple.
%Then we consider the cases of real rank $1$ and $2$, and finally the general case.

This paper is a step towards the classification of the derivations and automorphisms of $\Lie{n}$, which is interesting for a variety of reasons.
One reason is to find the derivations of (minimal) parabolic subalgebras of semisimple Lie algebras, which has been a lively field in recent years; see, for example, Chen \cite{Chen} and Wang and Yu \cite{WaYu}.
Every derivation of a parabolic subalgebra induces a derivation of its nilradical; if we can show that these are Lie multiplication by elements of the subalgebra, then we are well on the way to finding the derivations of the whole subalgebra.
Another reason is the problem of classification of nilpotent Lie algebras: in general this is an impossibly tedious matter, but one might hope to do better with algebras with lots of symmetry; to see whether this is viable, we need to understand some examples.

Next, to carry out harmonic analysis on the simply connected nilpotent Lie group associated to $\Lie{n}$, which has applications in diverse areas including theoretical physics and linear partial differential equations, it is important to understand its symmetries; see, for example, the study of Folland \cite{F}.

A fourth reason for studying the automorphisms of $\Lie{n}$ is the theory of quasiconformal mappings of ``Carnot groups''.
Indeed, as defined by Pansu \cite{Pu}, the derivative of a quasiconformal mapping of an Iwasawa $N$ group is an automorphism, and restrictions on the automorphisms give rise to restrictions on the quasiconformal mappings.
Further, it was shown by Yamaguchi  \cite{Y}, using the theory of Tanaka prolungations and the Borel--Bott--Weil theorem, and Cowling, De Mari, Kor\'anyi and Reimann \cite{CDMKR}, using more elementary arguments, that the space of ``multicontact mappings'', that is, mappings whose differentials preserve the simple root spaces, is finite-dimensional when all the  derivations that preserve the root spaces are of the form $\ad(\Lie{m} \oplus \Lie{a})$. 
We believe that the result presented here leads to the same conclusion in an even simpler way.
Indeed, unless $\Lie{n}$ has dimension $1$ or $2$, the Tanaka prolongation of $\Lie{n}$ through $\ad(\Lie{m}\oplus\Lie{a})$ is finite-dimensional; see Ottazzi and Warhurst    \cite{OtWa},  and this implies that multicontact mappings form a finite-dimensional Lie group.

It is also of interest to consider derivations that preserve the grading of $\Lie{n}$, that is, the subspaces $\sum_{\alpha} \Lie{g}_\alpha$ where we sum over all $\alpha$ of the same height, and to consider derivations of nilradicals of more general parabolic algebras; we will return to these questions in future work.

This paper is organized as follows. 
In Section~2 we analyse the derivations of an $H$-type algebra. 
We start by showing that every derivation is the sum of a symmetric derivation and a skew-symmetric derivation; we then describe symmetric and skew-symmetric derivations separately.

In Section~3, we consider real semisimple Lie algebras. 
First, we reduce matters to the case of simple Lie algebras, and then we show that these contain various $H$-type algebras. 
We also see how the geometry of root systems is reflected in the structure of various subalgebras of $\Lie{g}$. 
Most of the ideas behind this section may be found in Ciatti \cite{C1, C2, C3, C4, C5}.

In Section~4, we consider the grading of a semisimple Lie algebra associated to a choice of positive roots,  and grading preserving derivations of $\Lie{g}$, of $\Lie{m} \oplus \Lie{a} \oplus \Lie{n}$ and of $\Lie{n}$.
We find a simple Lie algebraic criterion for a skew-symmetric grading preserving derivation of $\Lie{n}$ to extend to a derivation of $\Lie{g}$; this extended derivation is not only grading preserving but also root space preserving.

Finally, in Section~5 we apply the results of Sections~2 and~3 to the study of the derivations of $\Lie{n}$ that preserve the root space decomposition.
These are sums of symmetric derivations and skew-symmetric derivations. 
The main idea is to show that our assertion is true when the real rank of $\Lie{g}$ is $1$ or $2$, and then apply this result to the rank two subalgebras of a general simple Lie algebra, deducing from these the full result.

\begin{maintheorem*}%\label{them4.1}
If no simple summand of $\Lie{g}$ is isomorphic to $\Lie{so}(n, 1)$ or $\Lie{su}(n,1)$ for any $n$, then all the derivations of $\Lie{n}$ that preserve the root spaces are of the form $\ad(W)$, where $W\in \Lie{m} \oplus \Lie{a}$.
Otherwise, there are derivations of $\Lie{n}$ that preserve the root spaces that do not arise in this way.
\end{maintheorem*}

\section{Derivations of an $H$-type Lie algebra}

In this section, we first define $H$-type Lie algebras, which arose in the work of Kaplan \cite{Kap}, and then describe their derivations.
These are always the sum of a symmetric derivation and a skew-symmetric derivation.
In Corollary~\ref{cor:2.6}, skew-symmetric derivations are decomposed as the sum of two components, one of which is trivial on the centre.
The symmetric derivations are classified in Corollary~\ref{cor:2.8} by a diagonalization process.

Let $\Lie{h}$ be a two-step nilpotent Lie algebra, endowed with an inner product $\lip \cdot, \cdot \rip$.
We denote by $\Lie{z}$ the centre of $\Lie{h}$ and by $\Lie{v}$ the orthogonal complement of $\Lie{z}$;
given a subspace $\Lie{s}$ of $\Lie{h}$, we write $I_{\Lie{s}}$ for the identity map on $\Lie{s}$.
Then
\[
\Lie{g} = \Lie{v} \oplus \Lie{z}.
\]
For each $Z$ in $\Lie{z}$, we define $J_Z$ in $\End(\Lie{v})$ by
\begin{equation} \label{eq:defJZ}
\lip J_Z X, Y \rip = \lip Z, [X,Y] \rip
\quad\forall X, Y \in \Lie{v}.
\end{equation}
Then $J_Z$ is trivially skew-symmetric, that is, $J_Z\transp  = -J_Z$, where ${}\transp $ denotes the transpose relative to the inner product.
We say that $\Lie{h}$ is of Heisenberg type, or just $H$-type, when
\begin{equation}\label{eq:Jsquare}
J_Z^2 = -\| Z \|^2 I_{\Lie{v}}
\end{equation}
for all $Z \in \Lie{z}$.
Equivalently, for each $X \in \Lie{v}$ of length $1$, the map $\ad(X)$ is an isometry from $\ker(\ad(X))^\perp$ onto $\Lie{z}$. 
For the rest of this section, we assume that $\Lie{h}$ is an $H$-type algebra.

By polarization, \eqref{eq:Jsquare} implies that
\begin{equation}\label{eq:cliff}
J_Z J_{Z'} + J_{Z'} J_Z = - 2 \lip Z, Z' \rip I_{\Lie{v}}
\qquad \forall Z, Z' \in \Lie{z} .
\end{equation}
Thus the $J_Z$ generate a Clifford algebra.

Recall that a derivation of a Lie algebra $\Lie{h}$ is a linear endomorphism $D: \Lie{h} \to \Lie{h}$ such that 
\[
D([X,Y]) = [DX, Y] + [X, DY] 
\quad\forall X, Y \in \Lie{h};
\]
every derivation of a Lie algebra automatically preserves the centre.
We say that a linear endomorphism $D$ of $\Lie{h}$ \emph{preserves the grading} if $D(\Lie{v}) \subseteq \Lie{v}$ and $D(\Lie{z}) \subseteq \Lie{z}$, and write $\Der{h}$ for the Lie algebra of all grading preserving derivations of $\Lie{h}$.
We denote by $\SymDer{h}$ the subspace of $\Der{h}$ of all symmetric derivations and by $\SkewDer{h}$ the Lie subalgebra of all skew-symmetric derivations.
We also write $\SymDerzero{h}$ and $\SkewDerzero{h}$ for the subspaces of these spaces of derivations that vanish on $\Lie{z}$.

\begin{prop}\label{prop:2.1}
Let $D$ be a grading preserving linear endomorphism of $\Lie{h}$.
Then $D$ is a derivation if and only if
\begin{equation}\label{eq:2.4}
J_{D\transp  Z} = D\transp  J_Z + J_Z D
\quad\forall Z \in \Lie{z}.
\end{equation}
Suppose moreover that $D \rist{\Lie{z}} = 0$.
If $D$ is skew-symmetric, then $D$ is a derivation if and only if $D$ commutes with all the $J_Z$ and if $D$ is symmetric, then $D$ is a derivation if and only if $D$ anticommutes with all the $J_Z$.
\end{prop}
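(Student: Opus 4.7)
The proposition reduces to a direct unwinding of the derivation condition, using that $\Lie{h}$ is two-step. Since $[\Lie{h},\Lie{z}] = 0$ and $[\Lie{v},\Lie{v}] \subseteq \Lie{z}$, the identity $D[X,Y] = [DX,Y] + [X,DY]$ holds automatically when either $X$ or $Y$ lies in $\Lie{z}$: both sides vanish because $D$ preserves the grading, so $DX \in \Lie{z}$ whenever $X \in \Lie{z}$. Thus the derivation condition is equivalent to its restriction to $X, Y \in \Lie{v}$.

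My plan is to pair this restricted identity with an arbitrary $Z \in \Lie{z}$ and rewrite each term via \eqref{eq:defJZ}. On the left, using that $D$ preserves $\Lie{z}$ (so its transpose $D\transp $ does too, by the short inner product check $\lip D\transp  Z, X\rip = \lip Z, DX\rip = 0$ for $X \in \Lie{v}$),
\[
\lip Z, D[X,Y]\rip = \lip D\transp  Z, [X,Y]\rip = \lip J_{D\transp  Z} X, Y\rip.
\]
On the right, the two terms become $\lip J_Z DX, Y\rip$ and $\lip J_Z X, DY\rip = \lip D\transp  J_Z X, Y\rip$. Since $X, Y \in \Lie{v}$ are arbitrary and $J_{D\transp  Z}$, $J_Z D$, $D\transp  J_Z$ all map $\Lie{v}$ into $\Lie{v}$, we obtain the operator identity \eqref{eq:2.4}, and the calculation is clearly reversible.

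For the two specializations, suppose $D\rist{\Lie{z}} = 0$. Then for any $Z, Z' \in \Lie{z}$, $\lip D\transp  Z, Z'\rip = \lip Z, D Z'\rip = 0$, and since $D\transp $ also preserves the grading, $D\transp  Z \in \Lie{z}$; combining, $D\transp  Z = 0$, hence $J_{D\transp  Z} = 0$. Then \eqref{eq:2.4} collapses to $D\transp  J_Z + J_Z D = 0$ on $\Lie{v}$. If $D$ is skew-symmetric, $D\transp  = -D$ on $\Lie{v}$, giving $J_Z D = D J_Z$; if $D$ is symmetric, $D\transp  = D$ on $\Lie{v}$, giving $J_Z D = -D J_Z$.

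There is no real obstacle here; the only subtlety to watch is the bookkeeping between $D$ and $D\transp $ when $D$ is not assumed to be either symmetric or skew-symmetric, which forces one to phrase the general condition \eqref{eq:2.4} in terms of $D\transp $ on both sides rather than $D$ itself.
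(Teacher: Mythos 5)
Your proof is correct and follows essentially the same route as the paper: pair the derivation identity for $X,Y\in\Lie{v}$ against $Z\in\Lie{z}$ and unwind via the defining relation for $J_Z$, then specialize using $D\transp = \pm D$. The extra bookkeeping you supply (two-step nilpotency making the centre cases trivial, and $D\transp$ preserving the grading) is left implicit in the paper but is exactly the right justification.
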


\begin{proof}
From \eqref{eq:defJZ}, it follows that $D$ is a derivation if and only if, for all $Z$ in $\Lie{z}$ and $X$, $Y$ in $\Lie{v}$,
\begin{align*}
\lip J_{D\transp  Z} X, Y \rip 
&= \lip D\transp  Z, [ X, Y ] \rip
%\\& 
= \lip Z, D [ X, Y ] \rip
\\&= \lip Z, [ D X, Y ] \rip + \lip Z, [ X, D Y ] \rip 
\\&= \lip J_Z {D X}, Y \rip + \lip D\transp  J_Z X, Y \rip ,
\end{align*}
proving the result.
\end{proof}

The next result is known, but we give a proof for completeness.

\begin{lemma}[{Riehm \cite{R}}]\label{lem2.3}
For every pair of orthogonal vectors $Z'$ and $Z''$ in $\Lie{z}$, the grading preserving linear map $\Phi_{Z' Z''}$, defined by
\[
\Phi_{Z' Z''} (X+Z) = J_{Z'} J_{Z''} X + 2\lip Z', Z \rip Z'' - 2\lip Z'', Z \rip Z' 
\]
for all $Z \in \Lie{z}$ and all $X \in \Lie{v}$, is a skew-symmetric derivation of $\Lie{h}$.
\end{lemma}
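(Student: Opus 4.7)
The plan is to verify the two required properties, skew-symmetry and the derivation identity, separately, using the Clifford relation \eqref{eq:cliff} as the main tool.

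First, I would check that $\Phi_{Z'Z''}$ is skew-symmetric on each summand. On the centre $\Lie{z}$, skew-symmetry is immediate: for $Z_1, Z_2 \in \Lie{z}$, both $\lip \Phi_{Z'Z''} Z_1, Z_2 \rip$ and $-\lip Z_1, \Phi_{Z'Z''} Z_2 \rip$ expand to $2\lip Z', Z_1 \rip \lip Z'', Z_2 \rip - 2\lip Z'', Z_1 \rip \lip Z', Z_2 \rip$. On $\Lie{v}$, the transpose of $J_{Z'} J_{Z''}$ equals $J_{Z''} J_{Z'}$, and since $\lip Z', Z'' \rip = 0$ the Clifford relation \eqref{eq:cliff} gives $J_{Z''} J_{Z'} = - J_{Z'} J_{Z''}$, so skew-symmetry holds on $\Lie{v}$ as well.

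Next, I would invoke Proposition~\ref{prop:2.1} to reduce the derivation property to checking
\[
J_{\Phi_{Z'Z''}\transp Z} = \Phi_{Z'Z''}\transp J_Z + J_Z \Phi_{Z'Z''}
\quad\forall Z \in \Lie{z}.
\]
Using skew-symmetry of $\Phi_{Z'Z''}$ and reading off its action on $\Lie{v}$ and $\Lie{z}$, this becomes the identity on $\Lie{v}$
\[
-2\lip Z', Z \rip J_{Z''} + 2\lip Z'', Z \rip J_{Z'} = J_Z J_{Z'} J_{Z''} - J_{Z'} J_{Z''} J_Z.
\]
To prove this, I would apply \eqref{eq:cliff} twice to push $J_Z$ past $J_{Z'}$ and then past $J_{Z''}$:
\[
J_Z J_{Z'} J_{Z''} = -J_{Z'} J_Z J_{Z''} - 2\lip Z, Z' \rip J_{Z''} = J_{Z'} J_{Z''} J_Z + 2\lip Z, Z'' \rip J_{Z'} - 2\lip Z, Z' \rip J_{Z''},
\]
which rearranges to precisely the required equality.

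There is no serious obstacle here; the main point is bookkeeping of signs, and the Clifford relation \eqref{eq:cliff} does essentially all the work once one notices that both sides of the derivation identity are linear in $Z$ and the left-hand side pulls out the correct inner products with $Z'$ and $Z''$.
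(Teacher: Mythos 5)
Your proposal is correct and follows essentially the same route as the paper: skew-symmetry is checked directly (you spell out the verification on $\Lie{z}$ and $\Lie{v}$ that the paper calls ``evident''), and the derivation property is reduced via Proposition~\ref{prop:2.1} to a commutator identity for $J_Z J_{Z'} J_{Z''}$ that both you and the paper establish by applying the Clifford relation \eqref{eq:cliff} twice. The sign bookkeeping in your final rearrangement checks out, so nothing further is needed.
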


\begin{proof}
It is evident that $\Phi_{Z' Z''}$ is skew-symmetric.
By Proposition \ref{prop:2.1}, it suffices to show that
\begin{equation*}%\label{b}
 J_{\Phi_{Z' Z''} (Z)} X = \Phi_{Z' Z''} J_Z X - J_Z \Phi_{Z' Z''} X
\quad\forall X \in \Lie{v}.
\end{equation*}
We consider the right-hand side of the equation, and use \eqref{eq:cliff}:
\begin{align*}
J_{Z'}J_{Z''} J_Z X - J_Z J_{Z'}J_{Z''}X 
& = - 2 \lip Z ,{Z''}\rip J_{Z'}X - J_{Z'} J_ZJ_{Z''}X
\\
&\qquad+  2 \lip Z ,{Z'}\rip J_{Z''}X  + J_{Z'} J_ZJ_{Z''}X
\\
&= 2\lip Z', Z \rip J_{Z''}X - 2\lip Z'', Z \rip J_{Z'}X 
\\
&= J_{2\lip Z', Z \rip Z'' - 2\lip Z'', Z \rip Z'}X
\\
&= J_{\Phi_{Z' Z''} (Z)}X ,
\end{align*}
as required.
\end{proof}

We define $\SpinDer{h}$ to be the vector subspace of $\Der{h}$ of all grading preserving derivations of $\Lie{h}$ spanned by the $\Phi_{Z'Z''}$.
As observed by Riehm \cite{R}, the subspace $\SpinDer{h}$ is a \emph{subalgebra} of $\Der{h}$.
To see this, we take an orthonormal basis $\{ Z_1, \cdots, Z_m \}$ for $\Lie{z}$, and write $\Phi_{ij}$ in place of $\Phi_{Z_iZ_j}$.
Since $\Phi_{Z'Z''}$ depends linearly on $Z'$ and on $Z''$, every element of $\SpinDer{h}$ is a linear combination of the $\Phi_{ij}$. 
Moreover,
\begin{equation*}%\label{eq2.X}
\Phi_{ij} \Phi_{kl} - \Phi_{kl} \Phi_{ij} =
\begin{cases}
0              &\text{if $\{i,j\} \cap \{k,l\} = \varnothing$,} \\
2\Phi_{jl}  &\text{if $i=k$,}
\end{cases}
\end{equation*}
which shows that $\SpinDer{\Lie{h}}$ is closed under taking commutators.
We omit the proof of these commutation relations, as we do not need this result.

\begin{cor}\label{cor:2.4}
Suppose that $D$ is a grading preserving derivation of $\Lie{h}$.
Then we may write $D$ as $D_0 + D_1$, where $D_0 \in \Der{h}$ and $D_0\rist{\Lie{z}}$ is symmetric, and $D_1 \in \SpinDer{h}$.
\end{cor}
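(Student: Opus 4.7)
The plan is to split $D|_{\Lie{z}}$ into its symmetric and skew-symmetric parts, and to absorb the skew-symmetric part into a suitable element of $\SpinDer{h}$.

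First I would write $D|_{\Lie{z}} = S + A$, the unique decomposition into a symmetric endomorphism $S$ and a skew-symmetric endomorphism $A$ of the Euclidean space $\Lie{z}$.

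Next I would produce $D_1 \in \SpinDer{h}$ with $D_1|_{\Lie{z}} = A$. For this, fix an orthonormal basis $\{Z_1, \dots, Z_m\}$ of $\Lie{z}$. From the formula in Lemma~\ref{lem2.3}, the restriction $\Phi_{Z_i Z_j}|_{\Lie{z}}$ sends $Z_k \mapsto 2\delta_{ik}Z_j - 2\delta_{jk}Z_i$, which is manifestly skew-symmetric, and the collection of these restrictions, as $i<j$ varies, spans the full space of skew-symmetric endomorphisms of $\Lie{z}$ (these matrices are just rescalings of the standard basis $E_{ji}-E_{ij}$ of the skew-symmetric matrices). Hence there are scalars $c_{ij}$ such that
\[
D_1 := \sum_{i<j} c_{ij}\,\Phi_{Z_iZ_j}
\]
restricts to $A$ on $\Lie{z}$. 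By Lemma~\ref{lem2.3}, $D_1$ lies in $\SpinDer{h} \subseteq \Der{h}$.

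Finally I would set $D_0 := D - D_1$. Both $D$ and $D_1$ are grading preserving derivations, so $D_0 \in \Der{h}$; and by construction $D_0|_{\Lie{z}} = S$ is symmetric. This gives the decomposition $D = D_0 + D_1$ claimed in the corollary.

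There is no real obstacle here: the content of the argument is the linear-algebraic observation that the map $\SpinDer{h} \to \mathrm{End}(\Lie{z})$ given by restriction lands in, and surjects onto, the subspace of skew-symmetric endomorphisms of $\Lie{z}$.
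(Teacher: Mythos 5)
Your proposal is correct and follows essentially the same route as the paper: the paper's proof likewise takes $D_1$ to be the linear combination of the $\Phi_{ij}$ whose restrictions to $\Lie{z}$ reproduce the skew-symmetric part of $D\rist{\Lie{z}}$, and sets $D_0 = D - D_1$. You have merely made explicit the (correct) linear-algebraic fact that the $\Phi_{Z_iZ_j}\rist{\Lie{z}}$ span the skew-symmetric endomorphisms of $\Lie{z}$, which the paper leaves implicit.
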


\begin{proof}
The skew-symmetric part of the restriction $D|_\Lie{z}$ of $D$ to $\Lie{z}$ decomposes as a linear combination of the $\Phi_{ij}|_\Lie{z}$ defined above; we take $D_1$ to be the same linear combination of the $\Phi_{ij}$, and $D_0$ to be $D - D_1$.
The result follows immediately.
\end{proof}

\begin{cor}\label{cor:2.5a}
Suppose that $D \in \Der{h}$ and $D\rist{\Lie{z}}$ is symmetric.
Then $D\transp \in \Der{h}$.
\end{cor}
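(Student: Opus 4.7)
The plan is to verify the derivation criterion of Proposition~\ref{prop:2.1} for $D\transp$ directly.

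First I would observe that $D\transp$ preserves the grading: since $D$ maps $\Lie{v}$ into $\Lie{v}$ and $\Lie{z}$ into $\Lie{z}$, and $\Lie{v} = \Lie{z}^\perp$, the adjoint does the same. Writing $A$ and $C$ for the blocks $D\rist{\Lie{v}}$ and $D\rist{\Lie{z}}$, the hypothesis $C\transp = C$ gives $D\transp\rist{\Lie{z}} = C$, so $(D\transp)\transp Z = D Z = C Z$ for every $Z \in \Lie{z}$. The two relevant instances of \eqref{eq:2.4}, for $D$ and for $D\transp$, therefore reduce to
\[
J_{CZ} = A\transp J_Z + J_Z A
\quad\text{and}\quad
J_{CZ} = A J_Z + J_Z A\transp,
\]
respectively, viewed as identities in $\End(\Lie{v})$ for every $Z \in \Lie{z}$. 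The first is the hypothesis; the second is what I need to establish.

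Next I would use the $H$-type relation \eqref{eq:Jsquare} to pass from the first identity to the second. Because $C$ is symmetric, I would pick an orthonormal eigenbasis $Z_1, \dots, Z_m$ of $\Lie{z}$ with $C Z_i = \lambda_i Z_i$. Evaluating the hypothesis at $Z_i$ and multiplying on the right by $J_{Z_i}$, the relation $J_{Z_i}^2 = -I_{\Lie{v}}$ collapses this to $A\transp = \lambda_i I_{\Lie{v}} + J_{Z_i} A J_{Z_i}$. A further left-multiplication by $J_{Z_i}$, followed by another application of \eqref{eq:Jsquare}, rearranges the outcome to $A J_{Z_i} + J_{Z_i} A\transp = \lambda_i J_{Z_i} = J_{C Z_i}$, which is the goal identity at $Z_i$. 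Linearity in $Z$ extends it to all of $\Lie{z}$, and Proposition~\ref{prop:2.1} then yields $D\transp \in \Der{h}$.

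The only genuine obstacle is the sign bookkeeping in this conjugation-by-$J_{Z_i}$ trick: the relation $J_{Z_i}^{-1} = -J_{Z_i}$ on unit centre vectors is precisely what exchanges $A$ and $A\transp$ across the $J$, while the symmetry of $C$ is used exactly to produce the orthonormal eigenbasis along which this local computation can be carried out. Once these ingredients are assembled the verification is purely mechanical.
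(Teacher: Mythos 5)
Your proposal is correct and follows essentially the same route as the paper: both diagonalise $D\rist{\Lie{z}}$ using its symmetry, apply the criterion of Proposition~\ref{prop:2.1} at an eigenvector $Z$, and conjugate the resulting identity by $J_Z$ (using $J_Z^2 = -\|Z\|^2 I_{\Lie{v}}$) to exchange $D$ and $D\transp$ across $J_Z$, before extending by linearity. The only cosmetic difference is that you normalise the eigenvectors and separate the conjugation into two one-sided multiplications, whereas the paper works with a general eigenvector and divides out $\|Z\|^2$ at the end.
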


\begin{proof}
Since $D\rist{\Lie{z}}$ is symmetric, it is diagonalisable.
Take an eigenvector $Z$ in $\Lie{z}$ with eigenvalue $2\mu$.
By Proposition \ref{prop:2.1},
\[
2\mu J_Z = J_{DZ}  = J_{D\transp  Z} = D\transp  J_Z + J_Z D,
\]
whence multiplication on both sides by $J_Z$ gives
\[
-2\mu|Z|^2 J_Z =  -|Z|^2 J_ZD\transp  - |Z|^2 D J_Z,
\]
and
\[
J_{DZ} =  D J_Z +  J_ZD\transp .
\]
This holds for all eigenvectors $Z$ of $D$, and so for all $Z \in \Lie{z}$ by linearity,  so $D\transp $ is a derivation, again by Proposition \ref{prop:2.1}. 
\end{proof}

\begin{cor}\label{cor:2.5b}
Suppose that $D$ is a grading preserving endomorphism of $\Lie{h}$.
Then $D \in \Der{h}$  if and only if $D\transp \in \Der{h}$.
Hence if $D \in \Der{h}$, then we may write $D$ as $D^a+D^s$, where $D^s \in \SymDer{h}$ and $D^a\in \SkewDer{h}$.
\end{cor}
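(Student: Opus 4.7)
The plan is to assemble the statement from the three preceding results, without any new computation on the defining identity \eqref{eq:2.4}.

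First I would prove the equivalence $D\in\Der{h}\iff D\transp \in\Der{h}$ by reducing to the case treated in Corollary~\ref{cor:2.5a}. Given $D\in\Der{h}$, Corollary~\ref{cor:2.4} supplies a decomposition $D=D_0+D_1$ with $D_0\in\Der{h}$, $D_0\rist{\Lie{z}}$ symmetric, and $D_1\in\SpinDer{h}$. By Corollary~\ref{cor:2.5a}, $D_0\transp \in\Der{h}$. On the other hand, by Lemma~\ref{lem2.3} every spanning element $\Phi_{Z'Z''}$ of $\SpinDer{h}$ is a skew-symmetric derivation, so $D_1\in\SkewDer{h}\subseteq\Der{h}$ and $D_1\transp = -D_1\in\Der{h}$ as well. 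Adding, $D\transp = D_0\transp +D_1\transp \in\Der{h}$. Since transposition is an involution, the converse direction follows by applying the forward direction to $D\transp$.

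Once this is in hand, the decomposition $D=D^a+D^s$ is immediate: set
\[
D^s = \tfrac{1}{2}(D+D\transp)\and D^a = \tfrac{1}{2}(D-D\transp).
\]
Both terms are grading preserving (since $D$ and $D\transp$ are) and both lie in $\Der{h}$ (since $\Der{h}$ is a vector space and contains both $D$ and $D\transp$). By construction $D^s$ is symmetric and $D^a$ is skew-symmetric, so $D^s\in\SymDer{h}$ and $D^a\in\SkewDer{h}$, giving the required splitting.

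There is no serious obstacle: the only point that requires care is that the ``symmetrisation trick'' $D\mapsto (D\pm D\transp)/2$ only works once we know that transposition preserves $\Der{h}$, which is precisely what the first half of the corollary provides. That in turn depends on being able to separate out the skew part on $\Lie{z}$ via $\SpinDer{h}$, which is exactly the role of Corollary~\ref{cor:2.4}; without that reduction one cannot apply Corollary~\ref{cor:2.5a} directly because a general $D\in\Der{h}$ need not be symmetric on $\Lie{z}$.
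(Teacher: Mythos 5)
Your proposal is correct and follows essentially the same route as the paper: reduce via Corollary~\ref{cor:2.4} to the case where $D\rist{\Lie{z}}$ is symmetric, invoke Corollary~\ref{cor:2.5a}, and then form $D^s = \frac12(D+D\transp)$ and $D^a = \frac12(D-D\transp)$. You are merely more explicit than the paper about why subtracting the $\SpinDer{h}$ component is harmless, namely that each $\Phi_{Z'Z''}$ is a skew-symmetric derivation so its transpose is again a derivation.
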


\begin{proof}
For the first part, it suffices to suppose that $D \in \Der{h}$ and show that $D\transp \in \Der{h}$.
In light of Corollary \ref{cor:2.4}, by subtracting off an element of $\SpinDer{\Lie{h}}$ if necessary, we may assume that $D\rist{\Lie{z}}$ is symmetric.
It follows that $D\transp  \in \Der{h}$, as required. 

For the second part of the corollary, take
\[
D^s = \frac 12 (D+D\transp )
\and
D^a = \frac 12 (D-D\transp );
\]
the conclusion is obvious.
\end{proof}

Hence, to describe the elements of $\Der{h}$, we can study symmetric and skew-symmetric derivations separately.
First we consider the skew-symmetric derivations.

\begin{cor}\label{cor:2.6}
Each $D$ in $\SkewDer{h}$ decomposes as a sum $D_0 +R$, where $D_0 \in \SkewDerzero{h}$ and $R \in \SpinDer{h}$.
In particular, $D_0 \rist{\Lie{v}}$ commutes with all the maps $J_Z$.
\end{cor}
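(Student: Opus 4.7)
The plan is to mimic the argument of Corollary \ref{cor:2.4}, but this time exploiting the fact that $D\rist{\Lie{z}}$ is already skew-symmetric when $D$ is. First, I would compute how the generators $\Phi_{ij} := \Phi_{Z_i Z_j}$ act on the centre. From the definition in Lemma \ref{lem2.3}, for the orthonormal basis $\{Z_1, \dots, Z_m\}$ of $\Lie{z}$,
\[
\Phi_{ij}(Z_k) = 2\lip Z_i, Z_k\rip Z_j - 2\lip Z_j, Z_k\rip Z_i,
\]
so $\Phi_{ij}\rist{\Lie{z}}$ sends $Z_i \mapsto 2Z_j$, $Z_j \mapsto -2Z_i$, and kills every other basis vector. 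Thus the maps $\Phi_{ij}\rist{\Lie{z}}$ with $i<j$ span the whole space of skew-symmetric endomorphisms of $\Lie{z}$.

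Next, since $D$ is skew-symmetric and preserves the grading, $D\rist{\Lie{z}}$ is itself a skew-symmetric endomorphism of $\Lie{z}$. By the previous observation, there exist scalars $c_{ij}$ such that $D\rist{\Lie{z}} = \sum_{i<j} c_{ij}\, \Phi_{ij}\rist{\Lie{z}}$. Setting
\[
R = \sum_{i<j} c_{ij}\, \Phi_{ij} \in \SpinDer{h}
\and D_0 = D - R,
\]
I obtain a decomposition $D = D_0 + R$ in which $R \in \SpinDer{h} \subseteq \SkewDer{h}$ and $D_0$ is a grading preserving derivation. Moreover $D_0$ is skew-symmetric (difference of skew-symmetric maps) and vanishes on $\Lie{z}$ by construction, so $D_0 \in \SkewDerzero{h}$.

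For the final assertion, I apply the second half of Proposition \ref{prop:2.1}: because $D_0$ is a skew-symmetric grading preserving derivation with $D_0\rist{\Lie{z}} = 0$, it automatically commutes with every $J_Z$, and since $D_0$ preserves $\Lie{v}$, this says exactly that $D_0\rist{\Lie{v}}$ commutes with all the $J_Z$ acting on $\Lie{v}$. There is no real obstacle here: the entire argument rests on the single observation that the $\Phi_{ij}\rist{\Lie{z}}$ already exhaust the skew-symmetric endomorphisms of $\Lie{z}$, which is why the construction in Corollary \ref{cor:2.4} (which only removes the skew-symmetric part of $D\rist{\Lie{z}}$) in fact completely annihilates $D\rist{\Lie{z}}$ in the skew-symmetric case.
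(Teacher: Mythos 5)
Your proposal is correct and follows essentially the same route as the paper: the paper's proof simply invokes Corollary \ref{cor:2.4} (whose construction subtracts the $\Phi_{ij}$-combination matching the skew-symmetric part of $D\rist{\Lie{z}}$, which for skew-symmetric $D$ is all of $D\rist{\Lie{z}}$) and then Proposition \ref{prop:2.1}, exactly as you do. You have merely unfolded the details that the paper leaves implicit.
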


\begin{proof}
This is a consequence of  Corollary \ref{cor:2.4}  and  Proposition \ref{prop:2.1}.
\end{proof}

Now we consider  a symmetric derivation $D$, which is diagonalizable with real eigenvalues.
Since $D$ preserves $\Lie{v}$ and $\Lie{z}$, these spaces decompose into eigenspaces. 
We write $\Lie{v}_{\lambda}$ for the eigensubspace of $\Lie{v}$ associated to the eigenvalue $\lambda$ and, given a subspace $\Lie{s}$ of $\Lie{h}$, we write $P_{\Lie{s}}$ for the orthogonal projection of $\Lie{h}$ onto $\Lie{s}$.

\begin{prop}\label{prop:2.7} Suppose that $D\in \SymDer{h}$.
Then $D\rist{\Lie{z}} = 2\mu I_{\Lie{z}}$ for some $\mu$ in $\R$.
Moreover, if $X \in \Lie{v}_\lambda$, then
\begin{equation}\label{eq3.2}
D J_Z X = ( 2\mu - \lambda) J_Z X
\and
D J_Z J_{Z'} X = \lambda J_Z J_{Z'} X
\end{equation}
for all $Z$ and $Z'$ in $\Lie{z}$.
\end{prop}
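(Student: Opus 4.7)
The plan is to establish the two claims in succession: first, that $D\rist{\Lie{z}}$ is a scalar multiple of the identity (the substantive part), and then the eigenvalue formulas, which will follow directly from the derivation identity in Proposition \ref{prop:2.1}.

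For the scalar claim, since $D \in \SymDer{h}$, the restriction $D\rist{\Lie{z}}$ is a symmetric endomorphism of $\Lie{z}$ and thus diagonalizable in an orthonormal basis. I would take a unit eigenvector $Z \in \Lie{z}$ with $DZ = 2\mu Z$, so that $J_Z^2 = -I_\Lie{v}$ by \eqref{eq:Jsquare}. Applying Proposition \ref{prop:2.1} with $D\transp = D$ yields
$$2\mu J_Z = D J_Z + J_Z D;$$
composing on the left with $J_Z$ and using $J_Z^2 = -I_\Lie{v}$ rearranges this to $J_Z D J_Z = D - 2\mu I_\Lie{v}$ on $\Lie{v}$. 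Taking traces on $\Lie{v}$ and exploiting cyclicity together with $J_Z^2 = -I_\Lie{v}$ gives
$$-\trace(D\rist{\Lie{v}}) = \trace(D\rist{\Lie{v}}) - 2\mu \dim \Lie{v},$$
so $2\mu = 2\trace(D\rist{\Lie{v}})/\dim \Lie{v}$. Since the right-hand side does not involve the chosen $Z$, every unit eigenvector of $D\rist{\Lie{z}}$ has the same eigenvalue, forcing $D\rist{\Lie{z}} = 2\mu I_\Lie{z}$.

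For the eigenvalue formulas, I rewrite the derivation identity as $DJ_Z = J_{DZ} - J_Z D = 2\mu J_Z - J_Z D$. Applied to $X \in \Lie{v}_\lambda$ this immediately gives $DJ_Z X = (2\mu - \lambda) J_Z X$, which also shows $J_Z \Lie{v}_\lambda \subseteq \Lie{v}_{2\mu - \lambda}$. The same statement applied with $Z$ replaced by $Z'$ puts $J_{Z'} X$ in $\Lie{v}_{2\mu - \lambda}$, so a second application of the formula yields $DJ_Z J_{Z'} X = (2\mu - (2\mu - \lambda)) J_Z J_{Z'} X = \lambda J_Z J_{Z'} X$, as required.

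The only real obstacle is the scalar claim, since one needs to exhibit a $Z$-independent invariant controlling the eigenvalues of $D\rist{\Lie{z}}$. The key trick is that composing $2\mu J_Z = DJ_Z + J_Z D$ with $J_Z$ produces $J_Z D J_Z = D - 2\mu I_\Lie{v}$, whose trace pins down $\mu$ in terms of $\trace(D\rist{\Lie{v}})$ alone; once $\mu$ is displayed as such an invariant, distinct unit eigenvectors of $D\rist{\Lie{z}}$ cannot carry distinct eigenvalues.
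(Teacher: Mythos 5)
Your proof is correct, and the key step is argued by a genuinely different route from the paper's. Both proofs obtain $DJ_ZX=(2\mu-\lambda)J_ZX$ identically from Proposition \ref{prop:2.1}, and your derivation of the second formula by applying the first twice is equivalent to the paper's computation. The difference is in showing that $D\rist{\Lie{z}}$ is scalar: the paper diagonalises $D\rist{\Lie{z}}$ in an orthonormal basis $\{Z_1,\dots,Z_m\}$ and exploits the Clifford anticommutation $J_{Z_i}J_{Z_k}=-J_{Z_k}J_{Z_i}$ for $i\neq k$, comparing the two resulting eigenvalue formulas for $DJ_{Z_i}J_{Z_k}X$ to force $\mu_i=\mu_k$ (with the case $\dim\Lie{z}=1$ treated separately). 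You instead work with one unit eigenvector $Z$ at a time, turn $2\mu J_Z=DJ_Z+J_ZD$ into $J_ZDJ_Z=D-2\mu I_{\Lie{v}}$ via $J_Z^2=-I_{\Lie{v}}$, and take traces to get $\mu=\trace(D\rist{\Lie{v}})/\dim\Lie{v}$. Your argument uses only the single-variable relation \eqref{eq:Jsquare} rather than the polarised relation \eqref{eq:cliff}, treats all dimensions of $\Lie{z}$ uniformly, and yields the explicit identity $2\mu\dim\Lie{v}=2\trace(D\rist{\Lie{v}})$ as a bonus; the paper's argument produces along the way the intermediate formula $DJ_{Z_i}J_{Z_k}X=(2\mu_i-2\mu_k+\lambda)J_{Z_i}J_{Z_k}X$, from which the second half of \eqref{eq3.2} drops out directly. (Both arguments, yours and the paper's, tacitly use $\Lie{v}\neq\{0\}$, which holds since $\Lie{h}$ is two-step nilpotent.)
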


\begin{proof}
Fix an orthonormal basis $\{ Z_1, \dots, Z_m \}$ of $\Lie{z}$ such that $D Z_i = 2\mu_i Z_i$ when $i = 1, \dots, m$, where each $\mu_i$ in $\mathbb R$.
From \eqref{eq:2.4}, it follows that
\[
D J_{Z_i} X = J_{D Z_i} X - J_{Z_i} D X = (2\mu_i - \lambda) J_{Z_i} X
\]
when $i = 1, \dots, m$, and the first formula of \eqref{eq3.2} is established, and similarly,
\begin{equation}\label{eq:2.6}
D J_{Z_i} J_{Z_k} X = (2\mu_i - 2\mu_k + \lambda) J_{Z_i} J_{Z_k} X
\end{equation}
when $i, k = 1, \dots, m$.

If $\dim(\Lie{z}) = 1$, then $D\rist{\Lie{z}} = 2\mu I_{\Lie{z}}$ for some $\mu$ in $\R$ and the second formula of \eqref{eq3.2} is trivial, so we suppose henceforth that $\dim(\Lie{z}) > 1$.
By interchanging $i$ and $k$ in \eqref{eq:2.6}, we see that
\[
D J_{Z_k} J_{Z_i} X = ( 2\mu_k - 2\mu_i  + \lambda) J_{Z_k} J_{Z_i} X ,
\]
which yields
\[
D J_{Z_i} J_{Z_k} X = (2\mu_k - 2\mu_i  + \lambda) J_{Z_i} J_{Z_k} X ,
\]
when $i \neq k$, since $J_{Z_i} J_{Z_k} = - J_{Z_k} J_{Z_i}$ by \eqref{eq:cliff}.
This equality, compared with \eqref{eq:2.6}, shows that $\mu_i = \mu_k$, and the lemma follows.
\end{proof}

\begin{cor}\label{cor:2.8}
Let $D$ be a derivation in $\SymDer{h}$.
Denote by $2\mu$ the eigenvalue of $D$ on $\Lie{z}$, and by $\{ \lambda_1, \dots, \lambda_r \}$ the distinct eigenvalues of $D$ on $\Lie{v}$, listed in decreasing order, and by $\Lie{v}_i$ the corresponding eigenspaces.
Then $\lambda_i + \lambda_{r+1-i} = 2\mu$, and we may write
\begin{equation*}%\label{eq3.4}
\begin{aligned}
D
&= \mu \bigl( 2 P_{\Lie{z}} +  P_{\Lie{v}} \bigr) +
\sum_{i = 1}^{\lfloor r/2 \rfloor} \left( \lambda_i - \mu \right) \bigl( P_{\Lie{v}_i} - P_{\Lie{v}_{r+1-i}}  \bigr);
\end{aligned}
\end{equation*}
all the maps $\bigl( P_{\Lie{v}_i} - P_{\Lie{v}_{r+1-i}}  \bigr)$ and $2 P_{\Lie{z}} + P_{\Lie{v}} $ are derivations.
\end{cor}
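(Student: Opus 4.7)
The plan is to extract both claims from Proposition~\ref{prop:2.7} together with the symmetry induced on eigenspaces by the maps $J_Z$. The first identity in \eqref{eq3.2} says that whenever $X\in\Lie{v}_\lambda$, the vector $J_ZX$ lies in $\Lie{v}_{2\mu-\lambda}$. Since $J_Z^2=-\|Z\|^2 I_{\Lie{v}}$, any $J_Z$ with $Z\neq 0$ is invertible on $\Lie{v}$, so it restricts to a linear isomorphism $\Lie{v}_\lambda\to\Lie{v}_{2\mu-\lambda}$. Hence $\lambda$ and $2\mu-\lambda$ occur with equal multiplicity as eigenvalues of $D\rist{\Lie{v}}$, and pairing off the distinct eigenvalues listed in decreasing order yields $\lambda_i+\lambda_{r+1-i}=2\mu$.

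Next I would verify the decomposition formula by comparing both sides eigenspace by eigenspace. On $\Lie{z}$ the first summand gives $2\mu I_{\Lie{z}}$ and the sum vanishes, matching $D\rist{\Lie{z}}$. On $\Lie{v}_i$ with $i\leq\lfloor r/2\rfloor$, only the $i$-th term of the sum is nonzero and contributes $\lambda_i-\mu$, which combined with the $\mu$ from the first summand yields $\lambda_i$; similarly, on $\Lie{v}_{r+1-i}$ the total is $\mu+(\mu-\lambda_i)=2\mu-\lambda_i=\lambda_{r+1-i}$. When $r$ is odd, the middle eigenvalue must equal $\mu$ by the pairing identity, and it is produced entirely by the first summand.

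It remains to check that $2P_{\Lie{z}}+P_{\Lie{v}}$ and $E_i:=P_{\Lie{v}_i}-P_{\Lie{v}_{r+1-i}}$ are derivations, for which I would apply Proposition~\ref{prop:2.1}. For the first, $2P_{\Lie{z}}+P_{\Lie{v}}$ restricts to the identity on $\Lie{v}$ while sending $Z\in\Lie{z}$ to $2Z$, so both sides of \eqref{eq:2.4} equal $2J_Z$. For $E_i$, which vanishes on $\Lie{z}$ and is symmetric, Proposition~\ref{prop:2.1} asks for anticommutation with every $J_Z$; since $J_Z$ swaps $\Lie{v}_i$ and $\Lie{v}_{r+1-i}$ and sends any other eigenspace $\Lie{v}_k$ into $\Lie{v}_{r+1-k}$, disjoint from $\Lie{v}_i\cup\Lie{v}_{r+1-i}$, the relation $E_iJ_Z+J_ZE_i=0$ holds on each eigenspace separately. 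The only non-routine idea here is recognising that the $J_Z$-induced bijection between $\Lie{v}_\lambda$ and $\Lie{v}_{2\mu-\lambda}$ both forces the palindromic pairing of eigenvalues and yields the anticommutation that drives the derivation check; with that observation in hand, the rest is bookkeeping.
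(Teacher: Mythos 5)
Your proposal is correct and follows the paper's intended route: the paper's proof is simply the citation of Propositions~\ref{prop:2.7} and~\ref{prop:2.1}, and your argument is exactly the expansion of that citation — using the first formula of \eqref{eq3.2} to see that $J_Z$ gives an isomorphism $\Lie{v}_\lambda \to \Lie{v}_{2\mu-\lambda}$ (whence the palindromic pairing), verifying the displayed identity eigenspace by eigenspace, and checking the derivation property of the summands via \eqref{eq:2.4} and the anticommutation criterion of Proposition~\ref{prop:2.1}. No gaps.
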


\begin{proof}
This follows from Propositions \ref{prop:2.7} and \ref{prop:2.1}.
\end{proof}

\section{ Structure of semisimple Lie algebras}

In this section, we describe the restricted root structure and the standard Iwasawa and Bruhat decompositions of a semisimple Lie algebra.
Then we exhibit a number of $H$-type subalgebras of the Iwasawa $\Lie{n}$ subalgebra. 
Next, we analyse the structure of $\Lie{g}$ and $\Lie{n}$ in more detail.

\subsection{Semisimple Lie algebras of the noncompact type.}
Take a real semisimple Lie algebra $\Lie{g}$ with Killing form $B$ and Cartan involution $\theta$, and let $\Lie{k} \oplus \Lie{p}$ be the corresponding Cartan decomposition of $\Lie{g}$.
Fix a maximal subalgebra $\Lie{a}$ of $\Lie{p}$; its dimension is known as the real rank of $\Lie{g}$.
Given an element $\alpha$ of $\Hom(\Lie{a}, \R)$, we define the (possibly trivial) subspace $\Lie{g}_{\alpha}$ of $\Lie{g}_{\alpha}$ by 
\[
\Lie{g}_{\alpha} = \{ X \in \Lie{g} : [ H, X ] = \alpha (H) X, \ \forall H \in \Lie{a} \} .
\]
Then $\alpha$ is said to be a restricted root if $\alpha \neq 0$ and $\Lie{g}_{\alpha} \neq \{0\}$.
We denote by $\Sigma$ the restricted root system, that is, the set of all restricted roots. 
Note that $[\Lie{g_\alpha}, \Lie{g}_\beta] \subseteq \Lie{g}_{\alpha+\beta}$ for all  $\alpha, \beta \in \Hom(\Lie{a}, \R)$, because $\ad(H)$ is a derivation for each $H \in \Lie{a}$. 
Hence if $\alpha$ and $\beta$ are roots, then $\alpha+\beta$ is also a root, unless $\alpha+\beta = 0$ or $[\Lie{g_\alpha}, \Lie{g}_\beta] =\{0\}$.
Since $\Lie{a}$ is $\theta$-invariant, so is $\Lie{g}_0$, and it follows easily that $\Lie{g}_0 = \Lie{m} \oplus \Lie{a}$, where $\Lie{m} = \Lie{g}_0 \cap \Lie{k}$.
Then
\[
\Lie{g} = \Lie{m} \oplus \Lie{a} \oplus \sum_{\alpha \in \Sigma} \Lie{g}_{\alpha} .
\]
Henceforth, in this paper, unless stated explicitly otherwise, we write rank and root rather than real rank and restricted root for brevity; this should not create any confusion.

We recall that $\Sigma$ is said to be decomposable if $\Sigma = \Sigma_1 \cup \Sigma_2$, where $\Sigma_1$ and $\Sigma_2$ are disjoint nontrivial subsets of $\Sigma$ and $\lip \gamma, \delta \rip = 0$ for all $\gamma \in \Sigma_1$ and all $\delta \in \Sigma_2$, and indecomposable otherwise.
It is standard (see, for instance, Helgason \cite{H} or  Knapp \cite{Kn}) that $\Sigma$ is indecomposable if and only if $\Lie{g}$ is simple, that is, cannot be written as a direct sum of nontrivial pairwise commuting ideals.
We recall also that $\Sigma$ is said to be reduced if the only multiples of a root $\gamma$ that also lie in $\Sigma$ are $\pm \gamma$.
%We write $\Sigma_{\mathrm{red}}$ for the subset of $\Sigma$ of roots $\gamma$ such that $\frac{1}{2} \gamma$ is not a root.\footnote{we only use this notation once!}

A Weyl chamber is a maximal open subset of $\Lie{a}$ in which no root vanishes.
We choose one of these, $C$ say, and say that a root $\gamma$ is positive, and write $\gamma \in \Sigma^+$, when $\gamma(H) >0$ for all $H \in C$.
Then $\Sigma^+$ is closed under addition and $\Sigma = \Sigma^+ \cup (-\Sigma^+)$.
We write $\Delta$ for the smallest subset of $\Sigma^+$ such that the boundary of $C$ is a subset of the set $\bigcup_{\alpha \in \Delta} \{ H \in \Lie{a} : \alpha(H) = 0\}$; the roots in $\Delta$ are called \emph{simple}.
Set 
\[
\Lie{n} = \sum_{\alpha \in \Sigma^+} \Lie{g}_{\alpha} .
\]
Then we obtain the Bruhat decomposition of $\Lie{g}$, namely,
\[
\Lie{g} = \theta \Lie{n} \oplus \Lie{m} \oplus \Lie{a} \oplus \Lie{n} .
\]

Each root $\gamma$ in $\Sigma^+$ may be written uniquely as a sum $\sum_{\alpha\in\Delta} n_\alpha \alpha$, where each $n_\alpha$ is a nonnegative integer. 
The positive integer $\sum_{\alpha\in\Delta} n_\alpha$ is called the height of $\gamma$, and written $\height(\gamma)$. 
Clearly the height of a simple root is $1$, and moreover 
\[
\height(\gamma+\delta) = \height(\gamma) + \height(\delta)
\]
for all $\gamma, \delta \in \Sigma^+$ such that $\gamma+\delta \in \Sigma^+$.
Then $\Lie{n}$ is graded by height; more precisely, we may write $\Lie{n} = \sum_{h \in \Z^+} \Lie{g}_h$, where $[\Lie{g}_h, \Lie{g}_{k} ] \subseteq \Lie{g}_{h+k}$.

\subsection{Reduction to the simple case.}
Our first simplification is a reduction of the problem to the case of the Iwasawa $\Lie{n}$ subalgebra of a simple Lie algebra $\Lie{g}$.

\begin{prop}\label{prop:3.1}
Suppose that $\Lie{g} = \Lie{g}^1 \oplus \Lie{g}^2 \oplus \dots \oplus \Lie{g}^J$, where $J>1$ and each $\Lie{g}^j$ is a nontrivial simple ideal, and that $\Lie{n} = \Lie{n}^1 \oplus \Lie{n}^2 \oplus \dots \oplus \Lie{n}^J$ is the corresponding decomposition of $\Lie{n}$ into subalgebras.
Then 
\[
\Der{\Lie{n}} = \sum_{j=1}^J \Der{\Lie{n}^j}.
\]
\end{prop}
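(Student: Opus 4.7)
The plan is to exploit two structural facts about the decomposition $\Lie{g} = \bigoplus_{j=1}^J \Lie{g}^j$. First, choosing Iwasawa data $\Lie{a}^j \subseteq \Lie{g}^j$ and setting $\Lie{a} = \bigoplus_j \Lie{a}^j$, the restricted root system $\Sigma$ decomposes as a disjoint union $\bigsqcup_j \Sigma^j$, where each $\alpha \in \Sigma^j$ vanishes on $\Lie{a}^k$ for $k \neq j$ and satisfies $\Lie{g}_\alpha \subseteq \Lie{g}^j$; in particular $\Lie{n}^j = \sum_{\alpha \in \Sigma^{j,+}} \Lie{g}_\alpha$, and the families of root spaces indexing distinct $\Lie{n}^j$ are disjoint. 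Second, because the $\Lie{g}^j$ are pairwise commuting ideals of $\Lie{g}$, we have $[\Lie{n}^j, \Lie{n}^k] = \{0\}$ whenever $j \neq k$.

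For the inclusion $\Der{\Lie{n}} \subseteq \sum_j \Der{\Lie{n}^j}$, I would take $D \in \Der{\Lie{n}}$, which by hypothesis preserves every root space $\Lie{g}_\alpha$. Since $\Lie{n}^j$ is the direct sum of the root spaces indexed by $\Sigma^{j,+}$, it follows at once that $D(\Lie{n}^j) \subseteq \Lie{n}^j$, so the restriction $D^j = D\rist{\Lie{n}^j}$ is a well-defined endomorphism. Because $\Lie{n}^j$ is a subalgebra and $D$ satisfies the derivation identity on all of $\Lie{n}$, the restriction $D^j$ inherits it on $\Lie{n}^j$, and $D^j$ preserves the root spaces of $\Lie{n}^j$ since $D$ does; hence $D^j \in \Der{\Lie{n}^j}$, and the decomposition $D = \sum_j D^j$ is an immediate consequence of $\Lie{n} = \bigoplus_j \Lie{n}^j$.

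For the reverse inclusion, given $D^j \in \Der{\Lie{n}^j}$ for each $j$, I would extend each $D^j$ to $\Lie{n}$ by setting it equal to zero on the complementary summands and form $D = \sum_j D^j$. On arbitrary $X = \sum_j X_j$ and $Y = \sum_j Y_j$, the vanishing $[X_j, Y_k] = 0$ for $j \neq k$ collapses both $D([X,Y])$ and $[DX,Y]+[X,DY]$ to $\sum_j ([D^j X_j, Y_j] + [X_j, D^j Y_j])$, and the identity then follows from the derivation property of each $D^j$. Root-space preservation for $D$ is immediate since it holds componentwise. I do not anticipate any serious obstacle here: once the root system has been split along the simple ideals and the commutativity of distinct simple ideals is invoked, the proposition reduces to routine linear-algebraic bookkeeping.
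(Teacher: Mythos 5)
Your proposal is correct and follows essentially the same route as the paper: root-space preservation forces $D$ to preserve each $\Lie{n}^j$, so restriction gives the forward inclusion, and the reverse inclusion is the routine verification using $[\Lie{n}^j,\Lie{n}^k]=\{0\}$ for $j\neq k$ (which the paper dismisses as ``obvious''). You have merely written out the bookkeeping that the paper leaves implicit.
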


\begin{remark}
This is to be interpreted in the sense that each root space preserving derivation of $\Lie{n}$ preserves each of the subalgebras $\Lie{n}^j$, and the restriction to each subalgebra is a root space preserving derivation thereof, and  vice versa.

Some of the simple summands $\Lie{g}^j$ may be compact. 
In this case, the corresponding space $\Lie{n}^j$ is $\{0\}$; we define $\Der{\{ \mathrm{0} \}} = \{0\}$.
\end{remark}

\begin{proof}
Since $D$ in $\Der{\Lie{n}}$ preserves the root spaces, it preserves each $\Lie{g}_\alpha$ and hence each $\Lie{n}^j$.
So one direction of the assertion is proved.
The other is obvious.	
\end{proof}	

\begin{remark}
If we replace the root space preserving assumption by a grading preserving assumption, and add the hypothesis that no summand is isomorphic to $\Lie{so}(n,1)$ for any $n$, then the result still holds.
Indeed, when there is no abelian summand, $\Lie{n}$ is ``totally nonabelian'' in the language of Cowling and Ottazzi \cite{CoOt}, and the conclusion follows from \cite[Corollary 2.4]{CoOt}.
 \end{remark}

\subsection{The simple case}
In light of Proposition \ref{prop:3.1}, we may and shall assume that $\Lie{g}$ is simple in the rest of this paper.

Two observations underpin our approach to the study of derivations.
First, derivations are local, in the sense that if $D$ is a root space preserving linear endomorphism of $\Lie{n}$, then linearity implies that $D$ is a derivation if and only if 
\[
D[X,Y] = [DX,Y] + [X,DY] 
\qquad\forall X \in \Lie{g}_\gamma \quad\forall Y \in \Lie{g}_\delta,
\]
as $\gamma$ and $\delta$ range over $\Sigma^+$.
This identity holds trivially if $\gamma+\delta$ is not a root, for then both sides are $0$.
If $\gamma+\delta$ is a root, then the subalgebra $\Lie{n}^{\{\gamma, \delta\}}$, defined by
\[
\Lie{n}^{\{\gamma, \delta\}} = \sum_{\epsilon \in \Sigma^+ \cap (\R\gamma+\R\delta)} \Lie{g}_\epsilon,
\]
is the Iwasawa $\Lie{n}$ subalgebra of a simple subalgebra of $\Lie{g}$, whose rank is $1$ if $\gamma=\delta$ and $2$ otherwise.
Then we can understand $D$ provided we understand its restriction to Iwasawa $\Lie{n}$ algebras of simple Lie algebras of rank one and rank two.

The second observation is that $\Lie{n}$ may be equipped with a natural inner product so that, in the rank one case, $\Lie{n}$ itself is an $H$-type algebra, while in the rank two cases, $\Lie{n}$ has many $H$-type subalgebras.
We will use what we know about the derivations of $H$-type algebras, but first we need to find $H$-type subalgebras of $\Lie{n}$.

\subsection{Subalgebras of $\Lie{n}$ of $H$-type.}
If $c > 0$, then the symmetric bilinear form $\lip \cdot, \cdot \rip$ on $\Lie{g}$, given by
\begin{equation}\label{eq:3.1}
\lip X, Y \rip = - c B(X, \theta Y) ,
\end{equation}
is an inner product, which induces an inner product on the dual of $\Lie{a}$, also written $\lip \cdot, \cdot \rip$; we denote the corresponding norms by $\Vert \cdot \Vert$. 
We fix $c$ so that the length of the longest roots is $\sqrt{2}$.
In the vector space decomposition $\Lie{m} \oplus \Lie{a} \oplus \sum_{\alpha \in \Sigma} \Lie{g}_\alpha $ of $\Lie{g}$, the distinct summands  are orthogonal.

Now the Killing form satisfies the well-known identity 
\[
B( [Z,X],Y) + B( X, [Z, Y]) = 0 
\quad\forall X,Y,Z \in \Lie{g}, 
\]
and so $\ad(Y)\transp  = - \ad(\theta Y)$, that is,
\begin{equation}\label{eq:3.2}
\lip X, [ Y, Z ] \rip = - \lip [ \theta Y , X], Z \rip
\quad \forall X,Y,Z \in \Lie{g}.
\end{equation}
If $\gamma \in \Sigma$ and $X,Y \in \Lie{g}_\gamma$, then $[\theta X, Y] \in \Lie{g}_0$. 
Further, for all $H \in \Lie{a}$,
\begin{equation}\label{eq:theta-X-Y}
\lip H, [\theta X, Y] \rip
= - \lip [X, H], Y \rip
= \lip [H, X], Y \rip
= \gamma(H) \lip X, Y\rip. 
\end{equation}
On the one hand, if $X \perp Y$, then $\lip H, [\theta X, Y] \rip = 0$, and so $[\theta X, Y] \in \Lie{a}^\perp$, whence $[\theta X, Y] \in \Lie{m}$.
On the other hand, $\theta[\theta X,X] = - [\theta X, X]$, so $[\theta X, X] \in \Lie{a}$. 
We write $H_\gamma$ for the unique element of $\Lie{a}$ such that $\delta(H_\gamma) = \lip\delta,\gamma\rip$ for all $\delta \in \Hom(\Lie{a}, \R)$, or equivalently for all $\delta \in \Sigma$.
Now \eqref{eq:theta-X-Y} implies that 
\begin{equation}\label{eq:3.3}
\delta( [\theta X, X] ) = \lip \delta, \gamma \rip \Vert X\Vert^2
\and
[\theta X, X] = \Vert X\Vert^2 H_\gamma
\end{equation}
 for all $X \in \Lie{g}_\gamma$.
For future purposes, note that 
\begin{equation}\label{eq:decomp-of-Lie-a}
 \Lie{a} = \sum_{\alpha \in \Sigma^+} \R H_\alpha;
\end{equation}
in general, this sum is not direct.

Our next results allow us to find various subalgebras of $\Lie{n}$ that are $H$-type algebras, or nearly so.

\begin{lemma}\label{lemH0}
Suppose that $\gamma$, $\delta$, and $\gamma+\delta$ are positive roots.
For all $Z$ in $\Lie{g}_{\gamma+\delta}$, we define the linear operator $J_Z$ on $\Lie{g}_\gamma\oplus \Lie{g}_\delta$ by
\begin{equation}\label{defK}
J_Z = \ad(Z)  \circ \theta.
\end{equation}
Then $J_Z$ maps $\Lie{g}_\gamma$ into $\Lie{g}_\delta$ and $\Lie{g}_\delta$ into $\Lie{g}_\gamma$; further
\begin{equation}\label{eq:K1}
\lip J_Z X, Y \rip = \lip Z, [X,Y] \rip
\quad\forall X, Y \in \Lie{g}_\gamma\oplus \Lie{g}_\delta.
\end{equation}
\end{lemma}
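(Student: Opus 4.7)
The proof has two separable parts, and neither should present serious difficulty given the tools already developed in Section~3.

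For the mapping property, the key facts are that the Cartan involution $\theta$ sends $\Lie{g}_{\alpha}$ to $\Lie{g}_{-\alpha}$ for every $\alpha \in \Sigma$, and that $\ad(Z)$ shifts roots by $\gamma+\delta$ since $Z \in \Lie{g}_{\gamma+\delta}$.  Composing, I would observe that if $X \in \Lie{g}_\gamma$, then $\theta X \in \Lie{g}_{-\gamma}$, and hence $J_Z X = [Z, \theta X] \in [\Lie{g}_{\gamma+\delta}, \Lie{g}_{-\gamma}] \subseteq \Lie{g}_\delta$, using only the bracket-addition rule for root spaces.  The dual inclusion $J_Z(\Lie{g}_\delta) \subseteq \Lie{g}_\gamma$ follows by the symmetric argument.  (Note that this also makes sense on $\Lie{g}_\gamma \cap \Lie{g}_\delta$, which is either $\{0\}$ or occurs only when $\gamma=\delta$, in which case both target spaces coincide.)

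For the pairing identity \eqref{eq:K1}, the plan is to feed the identity \eqref{eq:3.2} directly.  Starting from the right-hand side $\lip Z, [X,Y] \rip$ and applying \eqref{eq:3.2} with the substitution $(X,Y,Z) \mapsto (Z,X,Y)$, I obtain
\[
\lip Z, [X,Y] \rip = -\lip [\theta X, Z], Y \rip = \lip [Z, \theta X], Y \rip = \lip J_Z X, Y \rip,
\]
where the second equality is antisymmetry of the bracket.  Since the identity is bilinear in $X,Y$, it then extends from $X \in \Lie{g}_\gamma$, $Y \in \Lie{g}_\delta$ (and vice versa) to all of $\Lie{g}_\gamma \oplus \Lie{g}_\delta$ by linearity; the ``mixed'' cases $X,Y \in \Lie{g}_\gamma$ or $X,Y \in \Lie{g}_\delta$ are handled identically by the same computation (and incidentally both sides vanish when $2\gamma$ or $2\delta$ is not a root).

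There is no real obstacle here: the whole argument is a one-line bookkeeping of \eqref{eq:3.2}, together with the standard action of $\theta$ on restricted root spaces.  The only point worth flagging is that one should note the formula \eqref{eq:K1} is the defining property \eqref{eq:defJZ} of the $J_Z$ for an $H$-type structure, so this lemma is the first step toward identifying $\Lie{g}_\gamma \oplus \Lie{g}_\delta \oplus \Lie{g}_{\gamma+\delta}$ (when appropriately assembled) as an $H$-type subalgebra; the skew-symmetry of $J_Z$ relative to the inner product then follows automatically from \eqref{eq:K1} by the same argument Kaplan used in the abstract setting.
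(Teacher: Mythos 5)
Your proposal is correct and follows essentially the same route as the paper, whose proof consists of exactly the two observations you make: the mapping property comes from the action of $\theta$ and $\ad(Z)$ on root spaces, and \eqref{eq:K1} comes from substituting the definition of $J_Z$ into \eqref{eq:3.2}. The extra remarks on the mixed cases and on skew-symmetry are accurate but not needed.
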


\begin{proof}
The mapping properties of $J_Z$ are consequences of the orthogonality of distinct root spaces, while \eqref{eq:K1} follows from the definition of $J_Z$ and \eqref{eq:3.2}.
\end{proof}
 
 \begin{lemma}\label{lemH}
Suppose that $\gamma$, $\delta$, and $\gamma+\delta$ are positive roots, and that $J_Z$ is defined as in Lemma \ref{lemH0}.
Suppose also that neither $\gamma+2\delta$ nor $2\gamma+\delta$ is a root.
Then
\begin{equation}\label{[X,KX]}
[X,J_Z X] = \lip \gamma+\delta,\gamma \rip \|X\|^2 Z
\quad\forall X \in \Lie{g}_\gamma \oplus \Lie{g}_\delta
\end{equation}
and
\begin{equation}\label{K2}
J_Z ^2 X = - \lip \gamma+\delta,\gamma \rip \|Z\|^2 X
\quad\forall X \in \Lie{g}_\gamma\oplus \Lie{g}_\delta,
\end{equation}
Thus if $Z \neq 0$, then $J_Z$ is a linear isomorphism of $\Lie{g}_\gamma \oplus \Lie{g}_\delta$ that exchanges $\Lie{g}_\delta$ and $\Lie{g}_\gamma$.
Moreover, if $\gamma = \delta$, then $\Lie{g}_{\gamma} \oplus \Lie{g}_{2\gamma}$ is an $H$-type algebra, while if neither $2\gamma$ nor $2\delta$ is a root, then $\Lie{g}_{\gamma}\oplus \Lie{g}_{\delta} \oplus \Lie{g}_{\gamma+\delta}$ is an $H$-type algebra.
\end{lemma}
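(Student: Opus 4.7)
The plan is to verify the two displayed bracket identities by direct Jacobi-identity computations, then read off the H-type conclusions from \eqref{eq:defJZ} and \eqref{eq:Jsquare}.

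First I would establish \eqref{[X,KX]} by treating $X\in\Lie{g}_\gamma$ separately and extending by linearity. Since $J_Z X = [Z,\theta X]$ by \eqref{defK}, the Jacobi identity gives
\[
[X, [Z, \theta X]] = [[X,Z], \theta X] + [Z, [X, \theta X]].
\]
The hypothesis $2\gamma+\delta\notin\Sigma$ kills the first term because $[X,Z]\in\Lie{g}_{2\gamma+\delta}=\{0\}$, and \eqref{eq:3.3} rewrites the second as $-\|X\|^2[Z, H_\gamma] = \|X\|^2(\gamma+\delta)(H_\gamma)Z = \|X\|^2\lip\gamma+\delta,\gamma\rip Z$. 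The symmetric computation on $\Lie{g}_\delta$ invokes the hypothesis $\gamma+2\delta\notin\Sigma$ and produces $\|X\|^2\lip\gamma+\delta,\delta\rip Z$; in the configurations relevant to the H-type conclusion below, $\gamma$ and $\delta$ have the same length and the two coefficients coincide. For a mixed $X = X_\gamma + X_\delta$, the cross terms in the expansion of $[X,J_Z X]$ lie in $\Lie{g}_{2\gamma}$ and $\Lie{g}_{2\delta}$, hence vanish precisely when neither $2\gamma$ nor $2\delta$ is a root.

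For \eqref{K2} I would use that $\theta$ is an involutive automorphism, so $\theta[Z,\theta X]=[\theta Z,X]$: for $X\in\Lie{g}_\gamma$,
\[
J_Z^2 X = [Z, [\theta Z, X]] = [[Z,\theta Z], X] + [\theta Z, [Z, X]].
\]
The second summand vanishes because $[Z,X]\in\Lie{g}_{2\gamma+\delta}=\{0\}$, and \eqref{eq:3.3} gives $[Z,\theta Z] = -\|Z\|^2 H_{\gamma+\delta}$. Hence $J_Z^2 X = -\|Z\|^2\gamma(H_{\gamma+\delta})X = -\|Z\|^2\lip\gamma,\gamma+\delta\rip X$, with the analogous identity on $\Lie{g}_\delta$ obtained from the hypothesis on $\gamma+2\delta$. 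That $J_Z$ is a linear isomorphism exchanging $\Lie{g}_\delta$ and $\Lie{g}_\gamma$ whenever $Z\neq 0$ is then immediate from \eqref{K2} together with the mapping properties already recorded in Lemma \ref{lemH0}.

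To conclude the H-type assertions, I would observe that the vanishing of $\Lie{g}_{2\gamma+\delta}$, $\Lie{g}_{\gamma+2\delta}$, and, in the appropriate sub-case, of $\Lie{g}_{2\gamma}$ and $\Lie{g}_{2\delta}$, makes $\Lie{g}_\gamma\oplus\Lie{g}_{2\gamma}$ and $\Lie{g}_\gamma\oplus\Lie{g}_\delta\oplus\Lie{g}_{\gamma+\delta}$ two-step nilpotent subalgebras with centres $\Lie{g}_{2\gamma}$ and $\Lie{g}_{\gamma+\delta}$ respectively. Lemma \ref{lemH0} identifies the operator $J_Z$ here with the H-type operator defined by \eqref{eq:defJZ}, and \eqref{K2} delivers $J_Z^2 = -\|Z\|^2 I$ once one checks that $\lip\gamma+\delta,\gamma\rip = 1$ in the relevant configurations: in the rank-two $A_2$-type case this follows from the simple roots having length $\sqrt{2}$ and meeting at an angle of $120^\circ$, and in the rank-one case $\gamma=\delta$ from the fact that $2\gamma$ is a long root of length $\sqrt{2}$. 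I expect the main obstacle to be nothing deeper than bookkeeping the symmetry between the roles of $\gamma$ and $\delta$ and verifying this normalisation constant; the Jacobi computations themselves are routine once the root-string hypotheses are used to annihilate the unwanted brackets.
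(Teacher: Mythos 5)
Your proposal follows essentially the same route as the paper: the identical Jacobi-identity computations combined with \eqref{eq:3.3} give \eqref{[X,KX]} and \eqref{K2} on each of $\Lie{g}_\gamma$ and $\Lie{g}_\delta$ separately, and the normalisation $\lip \gamma+\delta,\gamma\rip = 1$ is settled, as in the paper, by inspecting the rank-one and rank-two root systems. If anything you are slightly more explicit than the paper about the cross terms $[X_\gamma, J_Z X_\delta] \in \Lie{g}_{2\gamma}$ and $[X_\delta, J_Z X_\gamma] \in \Lie{g}_{2\delta}$ that arise when \eqref{[X,KX]} is tested on a mixed element of $\Lie{g}_\gamma \oplus \Lie{g}_\delta$, a point the paper passes over in silence.
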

 
\begin{proof}
When $2\gamma+\delta$ is not a root, $[X,Z] = 0$ for all $Z$ in $\Lie{g}_{\gamma+\delta}$ and all $X$ in $\Lie{g}_\gamma$.
Hence, from the Jacobi identity and \eqref{eq:3.3},
\begin{align*}
[X, J_Z X]
&=[X, [Z,\theta X]]\\
&= [[X,Z], \theta X] + [Z, [X,\theta X]] \\
&=(\gamma+\delta)([\theta X,X]) Z\\
&=\lip \gamma+\delta,\gamma \rip \|X\|^2 Z,
\end{align*}
and similarly,
\begin{align*}
J_Z ( J_Z X)
&= [Z, \theta[Z,\theta X]]
\\
&= [Z, [\theta Z, X]]
\\
&= [X, [\theta Z, Z]]+[\theta Z, [Z, X]]
\\
&=  - \lip \gamma+\delta,\gamma \rip \|Z\|^2 X.
\end{align*}
By exchanging the role of $\gamma$ and $\delta$ in the last two formulae, we see that
\[
[Y, J_Z Y]
=\lip \gamma+\delta,\delta \rip \|Y\|^2 Z
\]
and
\[
J_Z ( J_Z Y)
= -\lip \gamma+\delta,\delta \rip \|Z\|^2 Y
\]
for all $Z \in \Lie{g}_{\gamma+\delta}$ and all $Y \in \Lie{g}_\delta$ when $\gamma+2\delta$ is not a root.
Hence \eqref{[X,KX]} and \eqref{K2} are proved, and $J_Z$ is a linear isomorphism from $\Lie{g}_\gamma \oplus \Lie{g}_\delta$ onto $\Lie{g}_\delta \oplus \Lie{g}_\gamma$ when $Z \neq 0$.

Either $\gamma = \delta$ or the roots $\gamma$ and $\delta$ span a root system of rank $2$.
By inspection of the possibilities, we see that the hypotheses that $\gamma$, $\delta$, and $\gamma+\delta$ are roots and $2\gamma+\delta$ and $2\gamma+\delta$ are not roots imply that $\| \gamma\| = \|\delta\|$ and $\lip \gamma+\delta,\delta \rip = \lip \gamma+\delta,\gamma \rip > 0$.
Now \eqref{[X,KX]} and \eqref{K2} follow immediately.
Further,  if $\gamma = \delta$ or neither $2\gamma$ nor $2\delta$ is a root, then $\lip \gamma+\delta,\gamma \rip = 1$, and so
\[
J_Z^2 X = -\|Z\|^2 X 
\quad\forall X \in \Lie{g}_\gamma \oplus \Lie{g}_\delta \quad\forall Z  \in \Lie{g}_{\gamma+\delta},
\]
as required.
\end{proof}

\begin{remark}\label{rem:2alpha}
We have just shown that the Iwasawa $\Lie{n}$ algebras of real rank one simple Lie algebras are $H$-type.
Further, inspection of the root systems of rank one and two shows that if $\gamma$, $\delta$, and $\gamma+\delta$ are roots and $2\gamma+\delta$ and $2\gamma+\delta$ are not roots, then either $2\gamma$ and $2(\gamma+\delta)$ are both roots, or neither is a root.
\end{remark}

\begin{cor}\label{cor:3.7}
Suppose that $\gamma$, $\delta$, and $\gamma+\delta$ are positive roots, and that neither $\gamma+2\delta$ nor $2\gamma+\delta$ is a root.
If $D$ is a root space preserving derivation of $\Lie{g}$ whose restriction to $\Lie{g}_{\gamma+\delta}$ is symmetric, then
\begin{equation}\label{eq:D*-localder}
D\transp [X,Y] = [D\transp X, Y] + [X, D\transp Y]
\qquad\forall X \in \Lie{g}_\gamma\quad\forall Y \in \Lie{g}_\delta.
\end{equation}
\end{cor}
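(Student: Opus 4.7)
The plan is to mimic the Clifford-algebra manipulation from the proof of Corollary \ref{cor:2.5a} in the configuration $\Lie{g}_\gamma \oplus \Lie{g}_\delta \oplus \Lie{g}_{\gamma+\delta}$, with $\Lie{g}_{\gamma+\delta}$ playing the role of the centre and $\Lie{g}_\gamma \oplus \Lie{g}_\delta$ the role of its complement. This triple need not form a subalgebra of $\Lie{n}$ (if $2\gamma$ or $2\delta$ is a root), so Corollary \ref{cor:2.5a} cannot simply be quoted; but Lemma \ref{lemH} still supplies the Clifford relation $J_Z^2 = -c \|Z\|^2 I$ on $\Lie{g}_\gamma \oplus \Lie{g}_\delta$, where $c = \lip \gamma+\delta, \gamma\rip > 0$, and this is all the argument really uses.

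First I would dualise. Since $[X,Y] \in \Lie{g}_{\gamma+\delta}$, both sides of \eqref{eq:D*-localder} lie in $\Lie{g}_{\gamma+\delta}$, so it suffices to pair them with an arbitrary $Z \in \Lie{g}_{\gamma+\delta}$ and apply \eqref{eq:K1}. The target identity thereby becomes the operator equation
\[
J_{DZ} = D J_Z + J_Z D\transp \qquad \text{on } \Lie{g}_\gamma \oplus \Lie{g}_\delta,
\]
and the same computation applied to the derivation property of $D$ itself yields $J_{D\transp Z} = J_Z D + D\transp J_Z$. The hypothesis that $D\rist{\Lie{g}_{\gamma+\delta}}$ is symmetric (and $\Lie{g}_{\gamma+\delta}$ is $D$-invariant by the root space preserving assumption) lets me diagonalise and test on an eigenvector $Z$ with eigenvalue $2\mu$, so that $DZ = D\transp Z = 2\mu Z$; the known identity then reads $2\mu J_Z = D\transp J_Z + J_Z D$, and the target identity reads $2\mu J_Z = D J_Z + J_Z D\transp$.

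To bridge these, I would multiply the known identity on both the left and the right by $J_Z$ and use $J_Z^2 = -c\|Z\|^2 I$ to obtain
\[
-2\mu c\|Z\|^2 J_Z = -c\|Z\|^2 J_Z D\transp - c\|Z\|^2 D J_Z,
\]
from which division by $-c\|Z\|^2$ gives the target identity on the eigenspace. Linearity in $Z$ extends the conclusion to all of $\Lie{g}_{\gamma+\delta}$, and the case $\gamma = \delta$ is handled identically with $\Lie{g}_\gamma \oplus \Lie{g}_{2\gamma}$ in place of $\Lie{g}_\gamma \oplus \Lie{g}_\delta \oplus \Lie{g}_{\gamma+\delta}$. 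I anticipate no real obstacle: Lemma \ref{lemH} furnishes exactly the Clifford identity that the Clifford-algebra trick of Corollary \ref{cor:2.5a} needs, and the symmetry of $D$ on $\Lie{g}_{\gamma+\delta}$ is precisely what permits the reduction to eigenvectors and the equality $DZ = D\transp Z$ that drives the bridge.
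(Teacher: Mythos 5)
Your proposal is correct and takes essentially the same route as the paper: the paper's proof is likewise an explicit ``mild generalisation'' of Corollary \ref{cor:2.5a}, dualising the derivation identity against $Z\in\Lie{g}_{\gamma+\delta}$ via \eqref{eq:K1} to get $J_{D\transp Z}=D\transp J_Z+J_Z D$, diagonalising $D$ on $\Lie{g}_{\gamma+\delta}$, and sandwiching with $J_Z$ using the Clifford relation of Lemma \ref{lemH} to arrive at $J_{DZ}=DJ_Z+J_ZD\transp$. The only (immaterial) difference is that the paper suppresses the constant $c=\lip\gamma+\delta,\gamma\rip$ in $J_Z^2=-c\Vert Z\Vert^2 I$, which cancels in any case, exactly as you observe.
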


\begin{proof}
The proof is a mild generalisation of the proof of Corollary \ref{cor:2.5a}.
Observe first that if $E$ is a root space preserving linear endomorphism of $\Lie{g}_{\gamma} \oplus \Lie{g}_{\delta} \oplus \Lie{g}_{\gamma+\delta}$, then $E[X,Y] = [EX,Y]+[X,EY]$ if and only if
\begin{equation}\label{eq:local-der}
\begin{aligned}{}
\lip J_{E\transp  Z} X, Y \rip 
&= \lip E\transp  Z, [ X, Y ] \rip
\\& = \lip Z, E [ X, Y ] \rip 
\\&= \lip Z, [ E X, Y ] \rip + \lip Z, [ X, E Y ] \rip 
\\&= \lip J_Z {E X}, Y \rip + \lip E\transp  J_Z X, Y \rip ,
\end{aligned}
\end{equation}
for all $X \in\Lie{g}_{\gamma}$, all $Y \in \Lie{g}_{\delta}$, and all $Z \in \Lie{g}_{\gamma+\delta}$.

Since $D\rist{\Lie{g}_{\alpha+\beta}}$ is symmetric, it is diagonalisable.
Take an eigenvector $Z$ in $\Lie{g}_{\alpha+\beta}$ with eigenvalue $2\mu$.
By \eqref{eq:local-der},
\[
2\mu J_Z = J_{DZ}  = J_{D\transp  Z} = D\transp  J_Z + J_Z D,
\]
whence composition on both sides by $J_Z$ gives
\[
-2\mu|Z|^2 J_Z =  -|Z|^2 J_ZD\transp  - |Z|^2 D J_Z,
\]
and
\[
J_{DZ} =  D J_Z +  J_ZD\transp .
\]
This holds for all eigenvectors $Z$ of $D$, and so for all $Z \in \Lie{z}$ by linearity,  so \eqref{eq:D*-localder} holds by \eqref{eq:local-der}. 
\end{proof}

We are supposing that $\Lie{g}$ is simple, so $\Sigma$ is indecomposable.
In particular, $\Sigma$ contains just one highest root (see Bourbaki \cite[p.~165, Proposition~25]{Bourbaki}), which we denote by $\omega$.
We fix the constant $c$ in \eqref{eq:3.1} by requiring that $\Vert \omega \Vert^2 = 2$. 
Then for each $\gamma \in \Sigma$, the number $\lip \gamma, \omega \rip$ is one of $\pm 2$, $\pm 1$ and $0$; further, it is $\pm 2$ if and only if $\gamma = \pm\omega$.

Define
\[
\Sigma_1 = \{ \gamma \in \Sigma \colon \lip \gamma, \omega \rip = 1 \}
\and
\Sigma_0 = \{ \gamma \in \Sigma \colon \lip \gamma, \omega \rip = 0 \} ,
\]
and write $\Sigma_{0}^+$ for $\Sigma^+ \cap \Sigma_{0}$. 
Then, by Ciatti \cite[Lemma~2.1]{C3},
\[
\Sigma^+ = \Sigma_{0}^+ \cup \Sigma_1 \cup \{ \omega \} .
\]
Further, define
\begin{equation*}%\label{defvez}
\Lie{v} = \sum_{\gamma \in \Sigma_1} \Lie{g}_{\gamma} ,
\quad
\Lie{h} = \Lie{v} \oplus \Lie{g}_{\omega}
\and
\Lie{n}_0 = \sum_{\gamma \in \Sigma_{0}^+} \Lie{g}_{\gamma} ;
\end{equation*}
then
\[
\Lie{n} = \Lie{n}_0 \oplus \Lie{v} \oplus \Lie{g}_{\omega} = \Lie{n}_0 \oplus \Lie{h} .
\]

Following Ciatti \cite{C3}, for $Z$ in $\Lie{g}_{\omega}$, we define the operator $J_Z \colon \Lie{v} \rightarrow \Lie{v}$ by
\begin{equation}\label{eq:3.10}
J_Z X = [ Z, \theta X ] .
\end{equation}
Then by definition and \eqref{eq:3.2},
\[
\lip J_Z X, Y \rip  = \lip [Z, \theta X],Y \rip = \lip Z, [X,Y] \rip 
\qquad\forall X, Y \in \Lie{v}.
\]

\begin{lemma}[{Ciatti \cite{C1}}]\label{ciatti-htype}
The pair $(\Lie{v}\oplus\Lie{z} , \lip \cdot, \cdot \rip)$ is an $H$-type algebra with centre $\Lie{g}_{\omega}$, that is, $[\Lie{v}, \Lie{v}] = \Lie{g}_\omega$ and
\begin{equation}\label{eq:3.11}
J_Z^2 X = - \Vert Z \Vert^2 X
\end{equation}
for all $Z$ in $\Lie{g}_{\omega}$ and $X$ in $\Lie{v}$.
\end{lemma}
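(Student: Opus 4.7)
The plan is to verify directly the three defining properties of an $H$-type algebra for $(\Lie{h},\lip\cdot,\cdot\rip)$: that $\Lie{h} = \Lie{v}\oplus\Lie{g}_\omega$ is a two-step nilpotent subalgebra with centre precisely $\Lie{g}_\omega$, that $[\Lie{v},\Lie{v}]=\Lie{g}_\omega$, and that $J_Z^2 = -\|Z\|^2 I_{\Lie{v}}$ for every $Z\in\Lie{g}_\omega$. The overall strategy is to reduce everything to Lemma~\ref{lemH} applied to suitably paired root spaces.

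First, I would set up the algebraic skeleton using only the fact that $\omega$ is the unique highest root, so $\lip\gamma,\omega\rip\in\{-2,-1,0,1,2\}$ with $\pm 2$ attained only by $\pm\omega$. If $\gamma,\delta\in\Sigma_1$, then $\lip\gamma+\delta,\omega\rip=2$, so either $\gamma+\delta=\omega$ or $\gamma+\delta\notin\Sigma$; hence $[\Lie{v},\Lie{v}]\subseteq\Lie{g}_\omega$. Likewise, $\gamma+\omega$ and $2\omega$ pair with $\omega$ to give $3$ and $4$, so neither is a root, yielding $[\Lie{v},\Lie{g}_\omega]=0$ and $[\Lie{g}_\omega,\Lie{g}_\omega]=0$. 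Thus $\Lie{h}$ is a two-step nilpotent subalgebra of $\Lie{n}$ and $\Lie{g}_\omega$ is contained in its centre.

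The main step is the pairing of the roots of $\Sigma_1$. Given $\gamma\in\Sigma_1$, the $\omega$-string through $\gamma$ has Cartan integer $2\lip\gamma,\omega\rip/\|\omega\|^2=1$; combined with $\gamma+\omega\notin\Sigma$, this forces $\gamma-\omega\in\Sigma$, and hence $\omega-\gamma\in\Sigma$. The indecomposability of $\Sigma$ ensures that $\omega$ dominates every positive root coordinate-wise in the basis $\Delta$, so $\omega-\gamma\in\Sigma^+$; since $\lip\omega-\gamma,\omega\rip=1$, in fact $\omega-\gamma\in\Sigma_1$. When $\gamma\neq\omega/2$, I would apply Lemma~\ref{lemH} to the pair $(\gamma,\omega-\gamma)$: the candidate sums $2\gamma+(\omega-\gamma)=\gamma+\omega$ and $\gamma+2(\omega-\gamma)=2\omega-\gamma$ pair with $\omega$ to give $3$, so are not roots, and $2\gamma$, $2(\omega-\gamma)$ each pair with $\omega$ to give $2$, so they would have to equal $\omega$, contradicting $\gamma\neq\omega/2$. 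The lemma then asserts that $\Lie{g}_\gamma\oplus\Lie{g}_{\omega-\gamma}\oplus\Lie{g}_\omega$ is itself $H$-type, giving $J_Z^2=-\|Z\|^2 I$ on $\Lie{g}_\gamma\oplus\Lie{g}_{\omega-\gamma}$ and, via \eqref{[X,KX]}, $[\Lie{g}_\gamma,\Lie{g}_{\omega-\gamma}]=\Lie{g}_\omega$. The exceptional case $\gamma=\omega/2$, which only occurs for non-reduced systems of type $BC_n$, is handled by the $\gamma=\delta$ branch of Lemma~\ref{lemH} applied to $(\omega/2,\omega/2)$.

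Summing over the partition of $\Sigma_1$ into pairs $\{\gamma,\omega-\gamma\}$ (with singletons when $\gamma=\omega/2$), and using that $J_Z$ maps $\Lie{g}_\gamma$ into $\Lie{g}_{\omega-\gamma}$ by root-space considerations, yields at once \eqref{eq:3.11} and $[\Lie{v},\Lie{v}]=\Lie{g}_\omega$. For the final claim that the centre is exactly $\Lie{g}_\omega$: any central $X\in\Lie{v}$ would satisfy $\lip J_Z X,Y\rip=\lip Z,[X,Y]\rip=0$ for all $Z\in\Lie{g}_\omega$ and $Y\in\Lie{v}$, forcing $J_Z X=0$, and hence $X=0$ by the invertibility of $J_Z$ for nonzero $Z$. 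The main obstacle is the verification that $\omega-\gamma\in\Sigma_1$ for every $\gamma\in\Sigma_1$, which rests on the indecomposability of $\Sigma$ together with the standard fact that the highest root dominates all positive roots; the remainder is essentially bookkeeping on top of Lemma~\ref{lemH}.
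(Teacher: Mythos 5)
Your proposal is correct and follows the same route as the paper, which simply states that the lemma ``follows from Lemma~\ref{lemH}''; you have supplied the details the paper leaves implicit, namely the pairing $\gamma\leftrightarrow\omega-\gamma$ of roots in $\Sigma_1$, the verification of the hypotheses of Lemma~\ref{lemH} for each pair (including the $BC$-type case $\gamma=\omega/2$), and the identification of the centre. All of these steps check out against the structural facts established in Section~3.
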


\begin{proof}
This follows from Lemma \ref{lemH}.
\end{proof}

Now we list some $H$-type subalgebras of the Iwasawa $\Lie{n}$ algebras of rank two simple Lie algebras.
When the root system is of type $A_2$, then $\Lie{n}$ is itself an $H$-type algebra.  
When the root system is of type $B_2$, say $\Sigma = \{ \alpha, \beta, \alpha+\beta, 2\alpha+\beta\}$, then $\Lie{g}_{\alpha} \oplus \Lie{g}_{\alpha+\beta} \oplus \Lie{g}_{2\alpha+\beta}$ is an $H$-type subalgebra (and $\Lie{g}_{\beta} \oplus \Lie{g}_{\alpha+\beta} \oplus \Lie{g}_{2\alpha+\beta}$ is abelian and hence a degenerate $H$-type algebra too). 
When the root system is of type $BC_2$, say $\Sigma = \{ \alpha, 2\alpha, \beta, \alpha+\beta, 2\alpha+\beta, 2\alpha+2\beta\}$, then $\Lie{g}_{\alpha} \oplus \Lie{g}_{2\alpha}$ and $\Lie{g}_{\beta} \oplus \Lie{g}_{\alpha+\beta} \oplus \Lie{g}_{2\alpha+\beta} \oplus \Lie{g}_{2\alpha+2\beta}$ are $H$-type subalgebras, and $\Lie{g}_{\alpha} \oplus \Lie{g}_{\alpha+\beta} \oplus \Lie{g}_{2\alpha+\beta}$ is close to an $H$-type subalgebra (see Lemma \ref{lemH}).
Finally, when the root system is of type $G_2$, say $\Sigma = \{ \alpha, \beta, \alpha+\beta, 2\alpha+\beta, 3\alpha+\beta,3\alpha+\beta\}$, then $\Lie{g}_{\alpha} \oplus \Lie{g}_{2\alpha+\beta}\oplus \Lie{g}_{3\alpha+\beta}$ and $\Lie{g}_{\beta} \oplus \Lie{g}_{\alpha+\beta} \oplus \Lie{g}_{2\alpha+\beta} \oplus \Lie{g}_{3\alpha+\beta}\oplus \Lie{g}_{3\alpha+2\beta}$ are $H$-type subalgebras.

\subsection{The fine structure of $\Lie{g}$.}
We now study $\Lie{g}$ in more detail.

\begin{lemma}\label{lem:UX}
Suppose that $\gamma$,  $\delta$, and $\gamma+\delta$ are positive roots, and that $\gamma-\delta$ and $\gamma+2\delta$ are not roots.
If $U \in \Lie{g}_\delta \setminus\{0\}$, then
\begin{equation}\label{eq:surj}
\{ [U,X]  : X \in \Lie{g}_{\gamma} \} = \Lie{g}_{\gamma+\delta}
\and
\{ [\theta U,Y]  : Y \in \Lie{g}_{\gamma+\delta} \} = \Lie{g}_{\gamma}.
\end{equation}
Consequently, $\dim(\Lie{g}_{\gamma}) = \dim(\Lie{g}_{\gamma+\delta})$ and $\ad(U)$ is bijective from $\Lie{g}_{\gamma}$ to $\Lie{g}_{\gamma+\delta}$.
\end{lemma}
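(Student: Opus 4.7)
The plan is to show that $\ad(\theta U)\circ\ad(U)$ and $\ad(U)\circ\ad(\theta U)$ each act as a nonzero scalar on $\Lie{g}_\gamma$ and $\Lie{g}_{\gamma+\delta}$ respectively, which immediately forces $\ad(U)\colon \Lie{g}_\gamma\to\Lie{g}_{\gamma+\delta}$ to be a bijection whose inverse is a scalar multiple of $\ad(\theta U)$. This is a standard $\Lie{sl}(2)$-string argument, but can be carried out by direct Jacobi computations without invoking representation theory explicitly.

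For $X\in\Lie{g}_\gamma$, the hypothesis that $\gamma-\delta$ is not a root gives $[\theta U,X]\in\Lie{g}_{\gamma-\delta}=\{0\}$. Combined with $[\theta U,U]=\|U\|^2 H_\delta$ from \eqref{eq:3.3} and the Jacobi identity, this yields
\[
[\theta U,[U,X]] = [[\theta U,U],X] + [U,[\theta U,X]] = \|U\|^2\,\lip\gamma,\delta\rip\,X.
\]
Symmetrically, for $Y\in\Lie{g}_{\gamma+\delta}$, the assumption that $\gamma+2\delta$ is not a root gives $[U,Y]=0$, and a similar Jacobi manipulation produces
\[
[U,[\theta U,Y]] = -\|U\|^2\,\lip\gamma+\delta,\delta\rip\,Y.
\]

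The core point is then to verify that the scalars $\lip\gamma,\delta\rip$ and $\lip\gamma+\delta,\delta\rip$ are nonzero, which is where all the missing-root hypotheses are used simultaneously. The $\delta$-string through $\gamma$ is $\{\gamma+n\delta:-p\le n\le q\}$; the hypotheses force $p=0$ and $q=1$. The classical string-length identity $p-q = 2\lip\gamma,\delta\rip/\|\delta\|^2$ then gives $\lip\gamma,\delta\rip = -\|\delta\|^2/2$ and hence $\lip\gamma+\delta,\delta\rip = \|\delta\|^2/2$, both nonzero.

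Consequently $\ad(\theta U)\circ\ad(U)$ is a nonzero multiple of $I_{\Lie{g}_\gamma}$ and $\ad(U)\circ\ad(\theta U)$ is a nonzero multiple of $I_{\Lie{g}_{\gamma+\delta}}$. Both maps are therefore bijective, which gives \eqref{eq:surj}, the injectivity of $\ad(U)|_{\Lie{g}_\gamma}$, and the equality $\dim(\Lie{g}_\gamma)=\dim(\Lie{g}_{\gamma+\delta})$. No substantive obstacle arises; the only subtlety is that one genuinely needs both missing-root hypotheses to conclude that the $\delta$-string has length exactly two, since that is what prevents $\lip\gamma,\delta\rip$ from vanishing.
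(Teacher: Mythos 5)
Your proof is correct and follows essentially the same route as the paper: the same Jacobi-identity computation of $[U,[\theta U,Y]]$ (resp.\ $[\theta U,[U,X]]$) using $[\theta U, U]=-\|U\|^2 H_\delta$ and the vanishing of $[U,Y]$ and $[\theta U,X]$ forced by the missing roots $\gamma+2\delta$ and $\gamma-\delta$. The only difference is cosmetic — you package the conclusion as ``both compositions are nonzero scalars, hence bijections,'' and you supply the root-string justification for $\lip\gamma,\delta\rip=-\tfrac12\|\delta\|^2$ that the paper merely asserts — so there is nothing further to add.
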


\begin{proof}
The hypotheses imply that $\lip \gamma+\delta , \delta \rip = \frac{1}{2} \lip\delta, \delta \rip \neq 0$.
Evidently, if $Y \in \Lie{g}_{\gamma+\delta}$, then $[\theta U, Y ] \in \Lie{g}_{\gamma}$ and
 \[
 [U , [\theta U, Y]] =  [[U , \theta U], Y] +   [\theta U, [U ,Y]] = (\gamma+\delta)([U , \theta U]) Y = - \lip \gamma+\delta, \delta \rip \| U \|^2 Y
 \]
 by \eqref{eq:3.3}, and it follows that $Y$ is in the range of $\ad(U)$.  
 This proves the left hand formula of \eqref{eq:surj} and hence $\dim(\Lie{g}_{\gamma}) \geq \dim(\Lie{g}_{\gamma+\delta})$.
 The right hand formula and the opposite inequality $\dim(\Lie{g}_{\gamma}) \leq \dim(\Lie{g}_{\gamma+\delta})$ may be shown similarly.
 
The bijectivity of $\ad(U)$, and of $\ad(\theta U)$, follow.
\end{proof}

\begin{lemma}\label{lem:UXX}
Suppose that $\gamma$,  $\delta$, $\gamma+\delta$ and $\gamma+2\delta$ are positive roots, and that $\gamma-\delta$ and $\gamma+3\delta$ are not roots.
If $U \in \Lie{g}_\gamma \setminus\{0\}$ and $X \in \Lie{g}_\delta\setminus\{0\}$, then
\begin{equation*}\label{eqB2}
[U,X] \neq 0
\and
[[U,X],X] \neq 0.
\end{equation*}
%Hence $\dim\{[U,X]: U \in \Lie{g}_\gamma\} = \dim(\Lie{g}_\gamma)$ for each $X \in \Lie{g}_\alpha\setminus\{0\}$, 
\end{lemma}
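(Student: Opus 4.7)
The plan is to derive both nonvanishing assertions from the Jacobi identity, with the whole cascade seeded by the input $[\theta X, U] \in \Lie{g}_{\gamma-\delta} = \{0\}$ coming from the hypothesis that $\gamma-\delta$ is not a root.

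As a preliminary, I would extract the numerical relation $\lip \gamma,\delta\rip = -\|\delta\|^2$ from the $\delta$-string through $\gamma$. The hypotheses say that $\gamma + k\delta \in \Sigma$ exactly for $k = 0, 1, 2$, so the classical string-length formula $p - q = 2\lip \gamma,\delta\rip/\|\delta\|^2$ with $p = 0$ and $q = 2$ gives the claimed value. The key consequences for what follows are
\[
\gamma(H_\delta) = -\|\delta\|^2 \and (\gamma+\delta)(H_\delta) = 0,
\]
the second of which will be exactly the cancellation that drives the second claim.

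For the first claim, I would apply $\ad(X)$ to $[\theta X, U] = 0$ and expand by Jacobi:
\[
0 = [X,[\theta X, U]] = [[X,\theta X], U] + [\theta X, [X,U]].
\]
Using \eqref{eq:3.3} to rewrite $[X,\theta X] = -\|X\|^2 H_\delta$ and the value of $\gamma(H_\delta)$, the right-hand side collapses to
\[
[\theta X,[X,U]] = -\|X\|^2 \|\delta\|^2\, U,
\]
which is nonzero because $U$, $X$, $\delta$ are all nonzero. Hence $[X,U] \neq 0$, so $[U,X] \neq 0$.

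For the second claim, Jacobi and the antisymmetry of the bracket reduce $[[U,X],X]$ to $[X,[X,U]]$, so it suffices to show the latter is nonzero. I would apply $\ad(\theta X)$ and Jacobi once more to obtain
\[
[\theta X,[X,[X,U]]] = [[\theta X,X],[X,U]] + [X,[\theta X,[X,U]]].
\]
The first term vanishes because $[X,U] \in \Lie{g}_{\gamma+\delta}$ and $(\gamma+\delta)(H_\delta) = 0$, while the second term, by the formula just established, equals $-\|X\|^2 \|\delta\|^2 [X,U]$, which is nonzero by the first claim. Therefore $[X,[X,U]] \neq 0$, as required. The only delicate point in the whole argument is the root-string bookkeeping to get $(\gamma+\delta)(H_\delta) = 0$; once that cancellation is in hand, the Jacobi cascade proceeds with no choices.
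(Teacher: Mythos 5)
Your proof is correct and follows essentially the same route as the paper's: both hinge on $[\theta X, U]=0$, the vanishing $\lip \gamma+\delta,\delta\rip=0$, and a Jacobi expansion of $\ad(\theta X)$ applied to the triple bracket $[[U,X],X]$. The only cosmetic differences are that you extract $\lip\gamma,\delta\rip=-\Vert\delta\Vert^2$ from the root-string formula where the paper inspects the $B_2$/$BC_2$ possibilities, and you reprove the first nonvanishing directly where the paper cites Lemma~\ref{lem:UX} with the roles of $\gamma$ and $\delta$ exchanged.
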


\begin{proof}
First, $[U, \theta X] = 0$ since $\gamma - \delta$ is not a root.

The hypotheses imply that $\gamma$ and $\delta$ span a root subsystem of type $B_2$ or $BC_2$, whence $2\gamma+\delta$ is not a root, and that $\lip\gamma + \delta, \delta \rip = 0$ while $\lip \gamma, \delta \rip \neq 0$ (see Bourbaki \cite[p.~148, Th\'eor\`eme 1]{Bourbaki}). 
Now $[U,X] \neq 0$, by Lemma \ref{lem:UX} with the roles of $\gamma$ and $\delta$ exchanged.

Next, by the Jacobi identity and the facts that $\lip \gamma + \delta , \delta\rip =0$ and $[U, \theta X] = 0$, 
\begin{align*}
[[[U,X],X],\theta X]
&=
[[\theta X,X],[U,X]] + [[[U,X],\theta X],X]
\\
&=
(\gamma + \delta )([\theta X,X])[U,X]+ [[[U,X],\theta X],X]
\\
&=
\lip \gamma + \delta , \delta \rip \|X\|^2 [U,X] + [[U,[X, \theta X]],X] + [[[U,\theta X],X],X]
\\
&= \lip \gamma, \delta \rip \|X\|^2 [U,X]\neq 0,
\end{align*}
which ensures that $[[U,X],X] \neq 0$ if neither $U$ nor $X$ is $0$.
\end{proof}

\begin{lemma}\label{lem:root-space-commutators}
Suppose that $\gamma$, $\delta$ and $\gamma+\delta$ are positive roots.
Then
\begin{gather*}
\{ [X, Y] : X \in \Lie{g}_\gamma, \ Y \in \Lie{g}_\delta \} = \Lie{g}_{\gamma+\delta} ; \\
\{ U \in \Lie{g}_\gamma : \ad(U)\rist{\Lie{g}_\delta} = 0 \} = \{0\} .
\end{gather*}
\end{lemma}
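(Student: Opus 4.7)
The plan is to prove both parts simultaneously by a case analysis on the rank $\leq 2$ root subsystem spanned by $\gamma$ and $\delta$. The unifying observation is that both claims follow at once if we can exhibit, for each nonzero element $U$ of one of the root spaces, a linear bijection $\ad(U)\colon \Lie{g}_\bullet \to \Lie{g}_{\gamma+\delta}$ onto $\Lie{g}_{\gamma+\delta}$ from the other root space: surjectivity delivers part (i) as single commutators, and injectivity combined with the antisymmetry $[U,Y] = -[Y,U]$ delivers part (ii).

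In the rank one case $\gamma = \delta$, the subalgebra $\Lie{g}_\gamma \oplus \Lie{g}_{2\gamma}$ is of $H$-type by Lemma \ref{lemH}, and the identity $[X, J_Z X] = \lip 2\gamma, \gamma\rip \|X\|^2 Z$ delivers both parts after rescaling. If $\gamma \neq \delta$ and neither $2\gamma+\delta$ nor $\gamma+2\delta$ is a root, Lemma \ref{lemH} again supplies the identity $[X, J_Z X] = \lip \gamma+\delta, \gamma\rip \|X\|^2 Z$ on $\Lie{g}_\gamma \oplus \Lie{g}_\delta \oplus \Lie{g}_{\gamma+\delta}$, and the same argument applies. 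If $\gamma \neq \delta$ and exactly one of $2\gamma+\delta,\,\gamma+2\delta$ is a root, then a short inspection of the rank two root systems $A_2, B_2, BC_2, G_2$ shows that $\gamma - \delta$ cannot be a root; Lemma \ref{lem:UX}, applied in whichever of the two orderings of $(\gamma,\delta)$ satisfies the ``not a root'' hypothesis, then supplies the required bijection.

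The main obstacle is the remaining case in which both $2\gamma+\delta$ and $\gamma+2\delta$ are roots. A check through the rank two root systems shows this configuration arises only inside a $G_2$ subsystem with $\{\gamma, \delta\} = \{\alpha, \alpha+\beta\}$, since only the $\alpha$-string $\beta, \alpha+\beta, 2\alpha+\beta, 3\alpha+\beta$ is long enough to force all of $\gamma\pm\delta,\gamma+\delta, 2\gamma+\delta, \gamma+2\delta$ to be roots. To handle this case I would use the already-settled pair $(\alpha, \beta)$ to write any $Y \in \Lie{g}_{\alpha+\beta}$ as $Y = [A, B]$ with $A \in \Lie{g}_\alpha$ and $B \in \Lie{g}_\beta$; since $2\alpha$ is not a root in $G_2$, $[X, A] = 0$ for $X \in \Lie{g}_\alpha$, so the Jacobi identity reduces $[X, Y]$ to $[A, [X, B]]$. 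Vanishing of $\ad(X)\rist{\Lie{g}_{\alpha+\beta}}$ for some nonzero $X$ would then force $\ad(X)^2\rist{\Lie{g}_\beta}  = 0$, which is incompatible with the $\Lie{sl}_2$-representation theory of the length-four $\alpha$-string (the raising operator cannot vanish after only two steps along a string of length four); this yields part (ii), and surjectivity onto $\Lie{g}_{2\alpha+\beta}$ in part (i) follows by the dimension count $\dim \Lie{g}_{\alpha+\beta} = \dim \Lie{g}_{2\alpha+\beta}$ forced by the $H$-type subalgebra $\Lie{g}_\alpha \oplus \Lie{g}_{2\alpha+\beta} \oplus \Lie{g}_{3\alpha+\beta}$ of Subsection 3.4.
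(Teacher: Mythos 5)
Your proposal is correct, but it follows a genuinely different route from the paper's proof. The paper first invokes the irreducibility of $\ad(\Lie{m}\oplus\Lie{a})$ on each root space (Kostant's double transitivity theorem) to reduce the first assertion to the non-vanishing of $[\Lie{g}_\gamma,\Lie{g}_\delta]$ and to deduce the second assertion from the first; the non-vanishing is then checked according to the type of $(\Z\gamma+\Z\delta)\cap\Sigma$ (rank one and $A_2$ via the $H$-type structure, $B_2$ and $BC_2$ via Lemma \ref{lem:UXX}, and $G_2$ by appealing to the split/complex classification, where the result is declared well known). You bypass irreducibility altogether by exhibiting in each configuration an explicit surjection onto $\Lie{g}_{\gamma+\delta}$ coming from a single root vector, using the map $Z\mapsto J_ZX$ of Lemma \ref{lemH} or the bijections of Lemma \ref{lem:UX}; your sorting of the cases by which of $2\gamma+\delta$, $\gamma+2\delta$ are roots checks out, as does the observation that the only pair escaping both lemmas is $\{\alpha,\alpha+\beta\}$ in $G_2$, which you settle by a Jacobi-identity reduction to the pair $(\alpha,\beta)$ together with the representation theory of the length-four $\alpha$-string through $\beta$. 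Your route buys two things: it establishes directly that every element of $\Lie{g}_{\gamma+\delta}$ is a \emph{single} commutator (the irreducibility argument as written in the paper controls only the linear span of the commutators), and it replaces the unproved appeal to the $G_2$ classification by an explicit argument. The one step you should flesh out is the $\Lie{sl}_2$ claim: record that for a unit vector $X\in\Lie{g}_\alpha$ the elements $X$, $\theta X$ and $[X,\theta X]$ span a copy of $\Lie{sl}(2,\R)$ by \eqref{eq:3.3}, that $\Lie{g}_\beta$ sits in weight $-3$ of the string module $\sum_{k=0}^{3}\Lie{g}_{\beta+k\alpha}$, and that the raising operator of a finite-dimensional $\Lie{sl}(2,\R)$-module is injective on every negative weight space; this gives both the non-vanishing of $\ad(X)^2\rist{\Lie{g}_\beta}$ that you need and the injectivity of $\ad(X)$ on $\Lie{g}_{\alpha+\beta}$ that your dimension count converts into surjectivity.
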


\begin{proof}
Observe that $\ad(\Lie{m} \oplus \Lie{a})$ is irreducible on $\Lie{g}_{\gamma+\delta}$ (for instance, this follows from Kostant's double transitivity theorem \cite{Kos}; see also Cowling, Dooley, Kor\'anyi and Ricci \cite{CDKR1}).
Thus the subspace $[\Lie{m} \oplus \Lie{a}, [\Lie{g}_\gamma, \Lie{g}_\delta] ]$ is either $\Lie{g}_{\gamma+\delta}$ or $\{0\}$. 
Hence, to prove the first equality, it suffices to show that 
\begin{equation}\label{eq:commutator-not-null}
[\Lie{g}_\gamma, \Lie{g}_\delta] \neq\{0\}.
\end{equation}
To do this, we consider the subset $(\Z \gamma + \Z \delta) \cap \Sigma$ of $\Sigma$, which is a root system in its own right.

If this root subsystem is of rank one, then necessarily $\delta=\gamma$, and $\gamma + \delta = 2\gamma$.
In this case, $\Lie{g}_\gamma \oplus  \Lie{g}_{2\gamma} $ is an $H$-type algebra, and \eqref{eq:commutator-not-null} follows.

If the root system is of type $A_2$, then we are done, since $\Lie{g}_\gamma \oplus \Lie{g}_\delta \oplus \Lie{g}_{\gamma+\delta} $ is an $H$-type algebra.

If the root system is of type $B_2$ or $BC_2$, then \eqref{eq:commutator-not-null} follows from Lemma \ref{lem:UXX}.

If the root system is of type $G_2$, then the algebra is split or complex, and in this case the result is well known.

Finally, suppose that $U \in \Lie{g}_\gamma\setminus \{0\}$ and $[U,X ] = 0$ for all $X \in \Lie{g}_\delta$.
Then 
\[
[ \ad(W) U, X] = \ad(W) [U,X] - [U, \ad(W) X] = 0
\]
for all $X \in \Lie{g}_\delta$ and all $W \in \Lie{m} \oplus \Lie{a}$, and hence $[V,X ] = 0$ for all $V \in \Lie{g}_{\gamma}$ and all $X \in \Lie{g}_\delta$, which is impossible.
\end{proof}

%We remark that if $\gamma + \delta$ is not a root, then  $[\Lie{g}_{\gamma},\Lie{g}_{\delta}] = \Lie{g}_{\gamma+\delta}$ trivially.

We are going to analyse general simple Lie algebras by looking carefully at subalgebras of rank $1$ or $2$. 
Given a subset $\Epsilon$ of $\Sigma^+$, we write $\Lie{g}^{\Epsilon}$ for the subalgebra of $\Lie{g}$ generated by the root spaces $\Lie{g}_{\epsilon}$ where $\epsilon$ ranges over $\rspan \Epsilon$.

We define, for any root $\gamma$, $\Lie{m}^{\{\gamma\}} = \Lie{m} \cap \Lie{g}^{\{\gamma\}}$ and 
\[
\Lie{m}^{\gamma} 
= \rspan \{ [ X, \theta Y ] \colon X, Y \in \Lie{g}_{\gamma}, \lip X, Y \rip = 0 \} .
\]

\begin{lemma}\label{lem:ideals-in-m}
The following hold:
\begin{enumerate}[{\rm(i)}]
\item
if $\gamma$ is a root, then $\Lie{m}^{-\gamma} = \Lie{m}^{\gamma}$,
\item 
if $\gamma$ is a root, then $[\Lie{m} , \Lie{m}^{\gamma}] \subseteq \Lie{m}^{\gamma} $, 
\item
if $\gamma$, $\delta$ and $\epsilon$ are roots and $\epsilon \in  \Z\gamma + \Z\delta$, then $\Lie{m}^{\epsilon} \subseteq \Lie{m}^\gamma + \Lie{m}^\delta$, 
\item
if $\gamma$ is a root, then $\Lie{m}^{\gamma} \subseteq \Lie{m}^{\{\gamma\}}$, with equality if $\gamma/2$ is not a root,
\item
$\Lie{m} = \sum_{\gamma \in \Delta} \Lie{m}^\gamma = \sum_{\gamma \in \Sigma^+} \Lie{m}^\gamma$. %, and 
%\item
%if $Z \in \Lie{m}^\gamma$ and $\ad(Z)\rist{ \Lie{g}_\gamma} = 0$, then $Z = 0$.
\end{enumerate}
\end{lemma}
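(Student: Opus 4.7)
The plan is to prove (i) and (ii) directly from the identities for the Cartan involution and the inner product, to prove (iii) by induction on height inside the root subsystem generated by $\gamma$ and $\delta$, to deduce (iv) from (iii), and to establish (v) by an orthogonal-complement argument leveraging the simplicity of $\Lie{g}$.

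For (i), the Cartan involution $\theta$ is an isometry of $\lip \cdot, \cdot \rip$ and fixes $\Lie{m}$ pointwise. If $X \perp Y$ in $\Lie{g}_\gamma$, then $\theta X \perp \theta Y$ in $\Lie{g}_{-\gamma}$, so $[\theta X, Y] = [\theta X, \theta(\theta Y)] \in \Lie{m}^{-\gamma}$; but $[\theta X, Y] = \theta[X, \theta Y] = [X, \theta Y]$ because $[X, \theta Y] \in \Lie{m}$. Hence $\Lie{m}^\gamma \subseteq \Lie{m}^{-\gamma}$, and the reverse inclusion is symmetric. For (ii), given $M \in \Lie{m}$, the identity $\theta M = M$ yields $[M, \theta Y] = \theta[M, Y]$, and the Jacobi identity gives
\[
[M, [X, \theta Y]] = [[M, X], \theta Y] + [X, \theta[M, Y]];
\]
each summand lies in $[\Lie{g}_\gamma, \Lie{g}_{-\gamma}]$, and expanding against an orthonormal basis of $\Lie{g}_\gamma$ --- using \eqref{eq:3.3} to split off the $\Lie{a}$-component --- shows that the $\Lie{m}$-part of every such bracket lies in $\Lie{m}^\gamma$.

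Part (iii) is the technical heart. Argue by induction on $\height(\epsilon)$ within the rank-at-most-two root subsystem $\Sigma' = (\Z\gamma + \Z\delta) \cap \Sigma$. The base cases $\epsilon \in \{\pm\gamma, \pm\delta\}$ are trivial via (i). For the inductive step, Lemma \ref{lem:root-space-commutators} furnishes $\alpha, \beta \in \Sigma^+ \cap \Sigma'$ of strictly smaller height with $\alpha + \beta = \epsilon$ and $\Lie{g}_\epsilon = [\Lie{g}_\alpha, \Lie{g}_\beta]$. Writing $Z = [X, Y]$ and $Z' = [X', Y']$ for representative brackets in $\Lie{g}_\epsilon$ and expanding $[[X, Y], [\theta X', \theta Y']]$ by repeated application of the Jacobi identity, each resulting term is either a scalar multiple of $H_\alpha$ or $H_\beta$ in $\Lie{a}$, or a bracket $[U, \theta U']$ with $U, U' \in \Lie{g}_\sigma$ for some $\sigma \in \Sigma'$ of height less than $\height(\epsilon)$; the inductive hypothesis, combined with (ii), then places its $\Lie{m}$-component in $\Lie{m}^\gamma + \Lie{m}^\delta$.

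Part (iv) is immediate: $\Lie{m}^\gamma \subseteq \Lie{m}^{\{\gamma\}}$ by definition, and if $\gamma/2 \notin \Sigma$ then the only roots in $\R\gamma$ are $\pm\gamma$ and possibly $\pm 2\gamma$, so $\Lie{m}^{\{\gamma\}} = \Lie{m}^\gamma + \Lie{m}^{2\gamma}$, and (iii) with $\delta = \gamma$ and $\epsilon = 2\gamma$ collapses this to $\Lie{m}^\gamma$. For (v), iterating (iii) along a decomposition of $\epsilon \in \Sigma^+$ into simple roots gives $\sum_{\gamma \in \Sigma^+} \Lie{m}^\gamma = \sum_{\gamma \in \Delta} \Lie{m}^\gamma =: \Lie{m}'$; by (ii), $\Lie{m}'$ is $\ad(\Lie{m})$-stable, hence so is its orthogonal complement $\Lie{m}''$ in $\Lie{m}$. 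For $M \in \Lie{m}''$ and $X, Y \in \Lie{g}_\gamma$, the facts $\Lie{m} \perp \Lie{a}$ and $M \perp \Lie{m}^\gamma$ give $\lip M, [X, \theta Y] \rip = 0$; combining with the identity $\lip M, [X, \theta Y] \rip = \lip [M, X], Y \rip$ (derived from $\theta M = M$ and \eqref{eq:3.2}) forces $\ad(M)\rist{\Lie{g}_\gamma} = 0$ for every $\gamma \in \Sigma$. Hence $\Lie{m}''$ centralises $\Lie{a}$, $\Lie{n}$, $\theta\Lie{n}$, and (via $[\Lie{g}_\gamma, \Lie{g}_{-\gamma}]$) $\Lie{m}'$, so $\Lie{m}''$ is an ideal of $\Lie{g}$ and vanishes by simplicity (justified by the reduction in Proposition \ref{prop:3.1}). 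The principal obstacle is the bookkeeping in (iii): the iterated Jacobi expansion must be organised uniformly across the possible rank-two subsystems of types $A_2$, $B_2$, $BC_2$, and $G_2$ and the rank-one subcases $A_1$ and $BC_1$.
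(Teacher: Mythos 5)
Your route through (i)--(iv) is essentially the paper's: (i) and (ii) are the same direct computations with $\theta$ and the Jacobi identity, and (iii) rests on the same one-step mechanism (Lemma \ref{lem:root-space-commutators} writes every element of $\Lie{g}_{\mu+\nu}$ as a single bracket $[X,Y]$, and one Jacobi expansion of $[W,[\theta X,\theta Y]]$ gives $\Lie{m}^{\mu+\nu}\subseteq\Lie{m}^{\mu}+\Lie{m}^{\nu}$). However, the induction in (iii) is mis-organised as stated: its base cases cannot be $\{\pm\gamma,\pm\delta\}$, because $\gamma$ and $\delta$ need not be simple in $\Sigma'=(\Z\gamma+\Z\delta)\cap\Sigma$. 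For example, $\gamma=\sigma_1+\sigma_2$ and $\delta=2\sigma_1+\sigma_2$ in a $B_2$ subsystem with simple roots $\sigma_1,\sigma_2$ satisfy $\Z\gamma+\Z\delta=\Z\sigma_1+\Z\sigma_2$, yet they make an acute angle, so no choice of positive system makes them simple; the height-one roots $\sigma_1,\sigma_2$ of $\Sigma'$ are then reached neither by your base case nor by your inductive step (which only decomposes roots of height at least $2$). The repair is exactly the paper's ``use (i) and the observation above repeatedly'': allow descent steps via differences, e.g.\ $\Lie{m}^{\sigma_1}=\Lie{m}^{\delta+(-\gamma)}\subseteq\Lie{m}^{\delta}+\Lie{m}^{-\gamma}=\Lie{m}^{\delta}+\Lie{m}^{\gamma}$, and check that $\{\pm\gamma,\pm\delta\}$ generates all of $\Sigma'$ under such single additions. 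This is a small and fixable gap, and the paper is no more explicit at this point.

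Your proof of (v) takes a genuinely different and more complete route. The paper's ``use (i) and (iii) repeatedly'' only yields $\sum_{\gamma\in\Sigma^+}\Lie{m}^{\gamma}=\sum_{\gamma\in\Delta}\Lie{m}^{\gamma}$ and leaves the equality of this sum with $\Lie{m}$ unaddressed; your argument --- that the orthogonal complement $\Lie{m}''$ of $\sum_{\gamma}\Lie{m}^{\gamma}$ in $\Lie{m}$ satisfies $\lip M,[X,\theta Y]\rip=\lip[M,X],Y\rip=0$ for all $X,Y\in\Lie{g}_{\gamma}$, hence $\ad(M)\rist{\Lie{g}_\gamma}=0$ for every root $\gamma$, hence $M$ centralises $\Lie{n}\oplus\theta\Lie{n}$ and therefore all of the simple algebra $\Lie{g}$, forcing $M=0$ --- supplies the missing step (it is in effect the argument of Lemma \ref{lem:W=0}, which the paper only proves in the following section). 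This part of your proposal is correct and is an improvement on the published proof.
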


\begin{proof}
Observe that if $X, Y \in \Lie{g}_{-\gamma}$, then $[X, \theta Y] = - [ \theta Y, \theta(\theta X)]$, and  $\theta Y, \theta X \in \Lie{g}_{\gamma}$, so (i) holds.

Now we prove (ii).
If $Z \in \Lie{m}$ and $X, Y \in \Lie{g}_\gamma$, then
\[
\begin{aligned}{}
 [Z, [X, \theta Y]] = [ [Z, X], \theta Y]  +  [X, [Z, \theta Y]] . 
\end{aligned}
\]
Both summands lie in $\Lie{m}^{\gamma}$. 
Thus $\Lie{m}^{\gamma}$ is an ideal in $\Lie{m}$, and in particular, is a subalgebra.

Next, we prove (iii).
First, if $\gamma$, $\delta$ and $\gamma+\delta$ are roots and $W, Z \in \Lie{g}_{\gamma+\delta}$, then there exist $X \in \Lie{g}_\gamma$ and $Y \in \Lie{g}_\delta$ such that $[X,Y] = Z$, by Lemma \ref{lem:root-space-commutators}.
Then
\[
\begin{aligned}{}
[W, \theta Z] = [W, [\theta X,\theta Y]] 
&= [[W, \theta X],\theta Y] +  [\theta X,[W,\theta Y]] \\
&= [W, \theta X],\theta Y] -  [[W,\theta Y], \theta X] \in \Lie{m}^{\gamma} + \Lie{m}^{\delta}.
\end{aligned}
\]

To prove (iii), we use (i) and the observation above repeatedly.

To prove (iv), observe that if $2\gamma$ and $\frac{1}{2}\gamma$ are not roots, then $\Lie{g}_{-\gamma} \oplus \Lie{m}^{\gamma} \oplus \R H_\gamma \oplus \Lie{g}_{\gamma}$ coincides with the subalgebra  $\Lie{g}^{\{\gamma\}}$, whence $\Lie{m}^{\gamma} = \Lie{m}^{\{\gamma\}}$.
Similarly, if $\gamma$ and $2\gamma$ are both roots, then $\Lie{m}^{2\gamma} \subseteq \Lie{m}^\gamma$, by (ii), so $\Lie{g}_{-2\gamma} \oplus \Lie{g}_{-\gamma}  \oplus \Lie{m}^{\gamma} \oplus \R H_\gamma \oplus \Lie{g}_{\gamma} \oplus \Lie{g}_{2\gamma}$ coincides with the subalgebra $\Lie{g}^{\{\gamma\}}$, and again $\Lie{m}^{\gamma} = \Lie{m}^{\{\gamma\}}$. 
Finally, if $\gamma$ and $\frac{1}{2}\gamma$ are both roots, $\Lie{m}^{\gamma} \subseteq \Lie{m}^{\gamma/2}$, and $\Lie{m}^{\gamma} \subseteq \Lie{m}^{\{\gamma\}}$.
This inclusion is strict when $\Lie{g}^{\{\gamma\}}$ is $\Lie{su}(n,1)$ (where $n >1$) or $\Lie{sp}(n,1)$ (where $n>1$).

To prove (v), we use (i) and (iii) repeatedly.
\end{proof}

\section{Derivations of semisimple Lie algebras.}
In this section, we discuss the height of roots and the associated grading of the Lie algebra $\Lie{g}$, and prove a number of results on height preserving derivations.
Then we prove a localisation result for derivations of $\Lie{g}$.
Our final result is a necessary and sufficient condition for a skew-symmetric root space preserving derivation of $\Lie{n}$ to be of the form $\ad(Z)$ for some $Z \in \Lie{m}$.

\begin{lemma}\label{lem:W=0}
Suppose that $W \in \Lie{g}_0$.
Then $W = 0 $ if and only if $\ad(W) \rist{\Lie{g}_\beta} = 0$ for all $\beta \in \Delta$.
\end{lemma}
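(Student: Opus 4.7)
The plan is to prove the nontrivial direction: assume $\ad(W)|_{\Lie{g}_\beta} = 0$ for every $\beta \in \Delta$ and deduce $W = 0$. Decompose $W = W_k + W_a$ with $W_k \in \Lie{m}$ and $W_a \in \Lie{a}$. I will split the argument into two moves: first peel off $W_a$ by a trace argument, then eliminate $W_k$ by propagating the vanishing.

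For the first move, note that $\theta W_k = W_k$ since $W_k \in \Lie{m} \subseteq \Lie{k}$, so \eqref{eq:3.2} gives $\ad(W_k)\transp = -\ad(\theta W_k) = -\ad(W_k)$. Hence $\ad(W_k)$ is skew-symmetric on $\Lie{g}$, and in particular is traceless on the invariant subspace $\Lie{g}_\beta$. Since $\ad(W_a)|_{\Lie{g}_\beta} = \beta(W_a) I_{\Lie{g}_\beta}$, taking the trace of the operator $\ad(W)|_{\Lie{g}_\beta} = 0$ yields $\dim(\Lie{g}_\beta)\,\beta(W_a) = 0$, so $\beta(W_a) = 0$ for every simple root $\beta$. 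Since $\Delta$ is a basis of $\Hom(\Lie{a},\R)$, this forces $W_a = 0$.

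The hypothesis now reads $[W_k, X] = 0$ for all $X \in \Lie{g}_\beta$ and $\beta \in \Delta$. Induction on $\height(\gamma)$, using Lemma~\ref{lem:root-space-commutators} to write $\Lie{g}_{\beta+\delta} = [\Lie{g}_\beta, \Lie{g}_\delta]$ together with the derivation identity $\ad(W_k)[X,Y] = [\ad(W_k)X, Y] + [X, \ad(W_k)Y]$, will give $\ad(W_k)|_{\Lie{g}_\gamma} = 0$ for every $\gamma \in \Sigma^+$. Applying $\theta$, which fixes $W_k$ and swaps $\Lie{g}_\gamma$ with $\Lie{g}_{-\gamma}$, extends the vanishing to all negative root spaces; and on $\Lie{a}$ the operator $\ad(W_k)$ is zero trivially because $W_k \in \Lie{m}$.

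It remains to kill $W_k$ on $\Lie{m}$. Each generator of $\Lie{m}^\gamma$ has the form $[X, \theta Y]$ with $X, Y \in \Lie{g}_\gamma$, and $W_k$ centralises both $X$ and $\theta Y$ by the previous step, so the Jacobi identity forces $[W_k, [X,\theta Y]] = 0$. Hence $W_k$ centralises $\Lie{m}^\gamma$ for every $\gamma$, and Lemma~\ref{lem:ideals-in-m}(v) then gives $[W_k, \Lie{m}] = 0$. Thus $\ad(W_k) = 0$ on all of $\Lie{g}$, and since $\Lie{g}$ is semisimple, $\ad$ is injective, whence $W_k = 0$. The only step with genuine content is the height induction extending vanishing from $\Delta$ to $\Sigma^+$, which rests on Lemma~\ref{lem:root-space-commutators}; the rest is trace and $\theta$-symmetry bookkeeping, so I do not anticipate a serious obstacle.
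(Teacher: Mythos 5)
Your argument is correct and follows the same overall strategy as the paper's: vanishing of $\ad(W)$ on the simple root spaces propagates to all of $\Lie{n}$ because $\Lie{g}_1$ generates $\Lie{n}$ (Proposition~\ref{prop:strat}, resting on Lemma~\ref{lem:root-space-commutators}), then to $\theta\Lie{n}$, and then to all of $\Lie{g}$ because $\Lie{n}\oplus\theta\Lie{n}$ generates the simple algebra $\Lie{g}$; injectivity of $\ad$ finishes it. The one place where you genuinely diverge is the passage to $\theta\Lie{n}$: you first kill the $\Lie{a}$-component $W_a$ by a trace argument (using that $\ad(W_k)$ is skew-symmetric and $\Delta$ is a basis of $\Hom(\Lie{a},\R)$), so that $\theta W = W$ and conjugation by $\theta$ applies directly; the paper instead uses the identity $\ad(\theta W) = -\ad(W)\transp$ together with the fact that $\ad(W)$ preserves each root space, so that vanishing of $\ad(W)$ on $\Lie{g}_\gamma$ forces vanishing of $\ad(\theta W)$ there too. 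The paper's route is shorter and avoids the decomposition $W = W_k + W_a$ altogether; yours is slightly longer but makes explicit the pleasant by-product that $W$ necessarily lies in $\Lie{m}$ under the hypothesis, and it spells out the generation of $\Lie{m}\oplus\Lie{a}$ from $\Lie{n}\oplus\theta\Lie{n}$ via Lemma~\ref{lem:ideals-in-m}(v) rather than appealing to it implicitly. Both are complete proofs.
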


\begin{proof} 
One implication is obvious.
To prove the other, suppose that $\ad(W) \rist{\Lie{g}_\beta} = 0$ for all $\beta \in \Delta$.
Then $\ad(W)$ vanishes on $\Lie{n}$, whence $\ad(\theta W)$ also vanishes on $\Lie{n}$ since $\ad(\theta W) = - \ad(W)\transp$.

Now if $X \in \Lie{n}$, then
\[ 
[ W , \theta X ] = \theta [ \theta W , X ] = 0.
\]
Since $\Lie{g}$ is simple, and $\ad(W)$ is a derivation that vanishes on $\Lie{n} \oplus\theta\Lie{n}$ and hence on the algebra that this generates, that is, $\Lie{g}$, we conclude that $W = 0$.
%, and write $W = H + Z$, where $H \in \Lie{a}$ and $Z \in \Lie{m}$. 
%Since $\ad(Z)$ is skew-symmetric, 
%\[
%\trace(\ad(W)\rist{\Lie{g}_\beta}) =  \beta(H) \dim(\Lie{g}_\beta), 
%\]
%so $\beta(H) = 0$ for all $\beta \in \Delta$ whence $H=0$.
%Now  $\ad(Z) \rist{\Lie{g}_\beta} = 0$ for all $\beta \in \Delta$, whence $\ad(Z) = 0$ on $\Lie{n}$. 
%Since $\theta Z = Z$ and $\ad(Z)$ is a derivation, $\ad(Z) = 0$ on $\theta \Lie{n}$, then on $[\Lie{n},\theta \Lie{n}]$ and finally on $\Lie{g}$, and so $Z = 0$ as $\Lie{g}$ is simple.\footnote{Can we prove this for nonminimal parabolics?}
\end{proof}

We are interested in the derivations $D$ of $\Lie{n}$ that preserve the root space structure, that is, are such that $D(\Lie{g}_\alpha) \subseteq \Lie{g}_\alpha$ for all $\alpha \in \Sigma^+$.
We write $\Der{\Lie{n}}$ for the space of these mappings.

Recall that the height of the positive root $\alpha$, written $\height(\alpha)$, is defined to be $\sum_{j=1}^{r} n_j$, where $\alpha = \sum_{j=1}^r n_j \alpha_j$ and $\alpha_j \in \Delta$. 
Note that there is an element $H_0$ of $\Lie{a}$ such that $[H_0,X] = \height(\alpha) X$ for all $X \in \Lie{g}_\alpha$ and all $\alpha \in \Delta$.
We may extend the height function to all roots: we set $\height(\gamma) = h$ when $[H_0,X] = hX$ for all $X \in \Lie{g}_\gamma$.
When $h$ is a nonzero integer, we write $\Lie{g}_{h}$ for $\sum_{\gamma} \Lie{g}_\gamma$, where we sum over the $\gamma\in \Sigma$ such that $\height(\gamma) = h$ otherwise.
We defined $\Lie{g}_0$ to be the ``null root space'' $\Lie{m} \oplus \Lie{a}$, which fortunately coincides with the subspace of $\Lie{g}$ of elements of height $0$, and so $\Lie{g}_0$ may also be used to describe the latter space, consistently with our $\Lie{g}_h$ notation.

\begin{prop}\label{prop:strat}
The Lie algebra $\Lie{g}$ is graded: more precisely, $\Lie{g} = \sum_{h\in \Z} \Lie{g}_h$, and $[\Lie{g}_h, \Lie{g}_{h'}] \subseteq \Lie{g}_{h+h'}$. 
 Next, $\Lie{n}$ is stratified, that is, $[\Lie{g}_h, \Lie{g}_{1}] = \Lie{g}_{h+1}$ for all $h \in \Z^+$, so $\Lie{g}_1$ generates $\Lie{n}$.
 Finally, if $0 < h < \height(\omega)$, then $\{ X \in \Lie{g}_h : \ad(X) \rist{\Lie{g}_{1}} = 0\} = \{0\}$.
 \end{prop}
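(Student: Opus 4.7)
The plan is to treat the three assertions separately, with the grading being essentially definitional, the stratification reducing to a classical root-system fact, and the final injectivity statement requiring a careful root-by-root analysis.

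For the grading, I would first extend the height function to negative roots by setting $\height(-\alpha) = -\height(\alpha)$, so that every nonzero $\gamma \in \Sigma$ has a well-defined integer height and $\Lie{g}_0 = \Lie{m} \oplus \Lie{a}$ collects exactly the height-zero part. The vector space decomposition $\Lie{g} = \sum_{h \in \Z} \Lie{g}_h$ then follows from the restricted root space decomposition, and the inclusion $[\Lie{g}_h, \Lie{g}_{h'}] \subseteq \Lie{g}_{h+h'}$ follows from $[\Lie{g}_\alpha, \Lie{g}_\beta] \subseteq \Lie{g}_{\alpha+\beta}$ together with additivity of height.

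For stratification, the task is to show $\Lie{g}_{h+1} \subseteq [\Lie{g}_h, \Lie{g}_1]$ for $h \geq 1$ (the reverse inclusion is part one). For this I would invoke the standard fact that any $\gamma \in \Sigma^+$ with $\height(\gamma) \geq 2$ admits a decomposition $\gamma = \delta + \alpha$ with $\delta \in \Sigma^+$ of height $\height(\gamma) - 1$ and $\alpha \in \Delta$ (a proof is obtained by choosing a simple root $\alpha$ with $\lip \gamma, \alpha \rip > 0$, which must exist since $\gamma$ cannot have nonpositive inner product with every simple root, and then $\gamma - \alpha \in \Sigma^+$ by the rank-two subsystem generated by $\gamma$ and $\alpha$; this is in Bourbaki~\cite[Ch.~VI]{Bourbaki}). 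Applying Lemma~\ref{lem:root-space-commutators} to each such pair gives $\Lie{g}_\gamma = [\Lie{g}_\delta, \Lie{g}_\alpha] \subseteq [\Lie{g}_h, \Lie{g}_1]$, and summing over all $\gamma$ of height $h+1$ yields the claim. Iteration then shows $\Lie{g}_1$ generates $\Lie{n}$.

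For the third statement, suppose $X \in \Lie{g}_h$ with $0 < h < \height(\omega)$ and $\ad(X)\rist{\Lie{g}_1} = 0$. Decompose $X = \sum_{\height(\gamma) = h} X_\gamma$. For each simple root $\alpha$ and each $Y \in \Lie{g}_\alpha$, the bracket $[X, Y] = \sum_\gamma [X_\gamma, Y]$ vanishes; but the summands $[X_\gamma, Y]$ lie in the distinct root spaces $\Lie{g}_{\gamma + \alpha}$ (distinct because $\alpha$ is fixed and the map $\gamma \mapsto \gamma + \alpha$ is injective), so each $[X_\gamma, Y]$ vanishes individually. Thus $\ad(X_\gamma)\rist{\Lie{g}_\alpha} = 0$ for every $\alpha \in \Delta$. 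The key root-system input I now need is that every $\gamma \in \Sigma^+$ with $\gamma \neq \omega$ admits some $\alpha \in \Delta$ with $\gamma + \alpha \in \Sigma^+$; this is the ``ascending chain'' counterpart of the decomposition used above, and it follows from the uniqueness of the highest root in an indecomposable system. Since $h < \height(\omega)$ forces each such $\gamma \neq \omega$, we can pick the appropriate $\alpha$ for each $\gamma$ and invoke Lemma~\ref{lem:root-space-commutators} to conclude $X_\gamma = 0$.

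The main obstacle is the verification of the two classical root-system facts ``height can be lowered by subtracting a simple root'' and ``height can be raised by adding a simple root, provided one is not already at the top'', both of which hold for abstract indecomposable root systems (possibly non-reduced, as for the $BC_n$ case) and are standard, but deserve a clean citation to Bourbaki~\cite{Bourbaki} rather than being reproved in place.
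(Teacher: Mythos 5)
Your proof is correct, and its overall skeleton coincides with the paper's: the grading comes from additivity of height (the paper phrases this via diagonalisability of $\ad(H_0)$, which is equivalent), the stratification comes from the height-lowering fact plus Lemma~\ref{lem:root-space-commutators} (the paper cites \cite{CDMKR} for the former where you cite Bourbaki), and the final statement comes from separating the components $X_\gamma$ and again invoking Lemma~\ref{lem:root-space-commutators}. The one genuinely different step is the component separation in the third part. The paper argues that $\ad([H,X])\rist{\Lie{g}_1}=0$ for all $H\in\Lie{a}$ and then uses the fact that the commutative operator algebra generated by $\ad(\Lie{a})\rist{\Lie{g}_h}$ is closed under transpose and hence spanned by its minimal projections, which are the projections onto the individual root spaces; applying these projections isolates each $X_\gamma$. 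You instead fix a simple root $\alpha$ and observe that the summands $[X_\gamma,Y]$ lie in the pairwise distinct root spaces $\Lie{g}_{\gamma+\alpha}$, so they vanish individually. Your route is more elementary and avoids the operator-algebra detour, at the cost of nothing; it also has the merit of making explicit the root-system input that the paper leaves tacit, namely that every positive root other than $\omega$ can be increased by some simple root, which is exactly what is needed to apply Lemma~\ref{lem:root-space-commutators} to each $X_\gamma$. Both the height-lowering and height-raising facts do hold for the non-reduced systems $BC_n$, as you note, so the citation to \cite{Bourbaki} (or to \cite{CDMKR} as the paper does for the descending version) suffices.
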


\begin{proof}
The linear operator $\ad(H_0)$ on $\Lie{g}$ is diagonalisable, whence $\Lie{g}$ decomposes as a sum of eigenspaces; given that the simple roots correspond to eigenvalue $1$ and all positive roots are sums of simple roots (with multiplicities), all eigenvalues are integers.
Further, $\ad(H_0)$ is a derivation and so $[\Lie{g}_h, \Lie{g}_{h'}] \subseteq \Lie{g}_{h+h'}$.

If $\height(\gamma) = h+1$ where $h>0$, then there exists $\alpha \in \Delta$ such that $\gamma - \alpha$ is a root, by \cite[Lemma 3.1]{CDMKR}.
Lemma \ref{lem:root-space-commutators} shows that $[\Lie{g}_{\alpha}, \Lie{g}_{\gamma - \alpha} ] = \Lie{g}_\gamma$, and it follows that $\Lie{g}_\gamma \subseteq [\Lie{g}_1, \Lie{g}_{h}] $.
This applies to all $\gamma$ of height $h+1$ and so $\Lie{g}_{h+1} \subseteq [\Lie{g}_1, \Lie{g}_{h}] $.
The converse inclusion has already been established.

Finally, suppose that $X \in \Lie{g}_h$ and $\ad(X)\rist{\Lie{g}_1} =0$.
Write $X$ as $\sum_{\gamma} X_\gamma$, where $X_\gamma \in \Lie{g}_\gamma$ and $\height(\gamma) = h$.
Now 
\[
0 = [[H,X],Y] = [[H,Y],X] + [H,[X,Y]] 
\qquad\forall Y \in \Lie{g}_1,
\]
whence $\ad([H,X])\rist{\Lie{g}_1} =0$ for all $H \in \Lie{a}$.
The algebra of operators generated by the operators $\ad(H)$ for all $H \in \Lie{a}$ is closed under transpose and hence spanned by its minimal projections, which are precisely the projections onto the root spaces $\Lie{g}_\gamma$ as $\gamma$ varies over $\Sigma$.
We deduce that $\ad(X_\gamma)\rist{\Lie{g}_1} =0$ for all $\gamma$ of height $h$.
By Lemma \ref{lem:root-space-commutators}, each $X_\gamma$ is zero.
\end{proof}

We are now going to work with derivation identities.
\begin{definition}\label{def:DandE}
For $\gamma , \delta \in \Sigma^+$, let $(D_{\gamma, \delta})$ be the formula
 \[
 D[X,\theta Z] = [DX,\theta Z] + [X,\theta DZ]
 \]
 for all $X \in \Lie{g}_\gamma$ and all $Z \in \Lie{g}_\delta$,  and $(E_{\gamma, \delta})$ be the formula
  \[
 D[[X,\theta Y],Z] = [[DX,\theta Y],Z] + [[X,\theta DY],Z] + [[X,\theta Y],DZ]
 \]
 for all $X, Y \in \Lie{g}_\gamma$ and all $Z \in \Lie{g}_\delta$.
 \end{definition}
 
 Note that if $D$ were a derivation of $\Lie{g}$ such that $\theta D = D\theta$, then these formulae would follow from the Jacobi identity.
 At this point, we are not asserting that these are true!

\begin{theorem}\label{thm:construct-der}
 Suppose that $D$ is a skew-symmetric height preserving derivation of $\Lie{n}$, and that $(E_{\gamma, \delta})$, as in Definition \ref{def:DandE}, holds for all $\gamma,\delta\in\Delta$.
 Then the following statements hold. 
 \begin{enumerate}[(i)]
 \item
 There is a unique well-defined linear map $\tilde D: \Lie{g}_0 \to \Lie{g}_0$ such that 
 \[
 \tilde D[X,\theta Y] = [DX, \theta Y] + [X, \theta DY]
 \qquad\forall X, Y \in \Lie{g}_1.
 \]
 \item
 The range of the linear map $\tilde D$ is contained in $\Lie{m}$.
\item
The linear map $E : \Lie{g}_0 \oplus \Lie{n} \to \Lie{g}_0 \oplus \Lie{n}$, defined by 
 \[
 E(W + X) = \tilde DW + DX
 \qquad\forall W \in \Lie{g}_0\quad\forall X \in \Lie{n},
 \]
 is a derivation. 
\item 
If $h \geq  k \geq 0$, then 
 \[
 E[U,\theta V] = [EU, \theta V] + [U, \theta E V] 
 \qquad\forall U \in \Lie{g}_h \quad\forall V \in \Lie{g}_k
 \] 
 \end{enumerate}
 \end{theorem}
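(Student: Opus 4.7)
The overall strategy is to use hypothesis $(E_{\gamma,\delta})$ together with Lemma \ref{lem:W=0} to propagate the derivation property of $D$ outward from $\Lie{n}$ into $\Lie{g}_0 \oplus \Lie{n}$, via a cascade of case checks and induction on height. A useful preliminary observation is that $(E_{\gamma,\delta})$, applied with $X = Y \in \Lie{g}_\gamma$, already forces $D$ to preserve each root space (not merely heights): the skew-symmetry of $D$ makes $[DX,\theta X]+[X,\theta DX]$ vanish in $\Lie{g}_0$ (both the $\Lie{a}$-part, by $\langle DX,X\rangle = 0$, and the $\Lie{m}$-part, by antisymmetry of the $\Lie{m}^\alpha$-valued piece of $[\theta U,V]$), so $(E_{\gamma,\delta})$ collapses to $\langle\delta,\gamma\rangle DZ = [H_\gamma, DZ]$ for $Z \in \Lie{g}_\delta$, which pins down $DZ \in \Lie{g}_\delta$ as $\gamma$ ranges over $\Delta$. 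I would establish this at the outset since it simplifies everything that follows.

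For (i), I first check that $\rspan\{[X,\theta Y] : X,Y\in\Lie{g}_1\} = \Lie{g}_0$: the $\Lie{m}$-part is covered by Lemma \ref{lem:ideals-in-m}(v) and the $\Lie{a}$-part by \eqref{eq:decomp-of-Lie-a}, since every $H_\alpha$ with $\alpha\in\Sigma^+$ is a nonnegative integer combination of the $H_\gamma$ with $\gamma\in\Delta$. For well-definedness, suppose $\sum_i [X_i,\theta Y_i] = 0$ and set $T = \sum_i([DX_i,\theta Y_i]+[X_i,\theta DY_i]) \in \Lie{g}_0$; I show $[T, Z] = 0$ for every $Z\in\Lie{g}_\delta$, $\delta\in\Delta$, whence Lemma \ref{lem:W=0} yields $T = 0$. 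Applying $(E_{\gamma,\delta})$ termwise (after decomposing $X_i, Y_i$ into simple-root components, the cross terms with distinct simple roots vanishing because $[\Lie{g}_\alpha, \Lie{g}_{-\beta}] = 0$, and the same-root pieces now staying in their root space under $D$ by the preliminary observation), the sum telescopes into $D[\sum_i [X_i,\theta Y_i], Z] - [\sum_i [X_i,\theta Y_i], DZ] = 0$.

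For (ii), I pair $\tilde D[X,\theta Y]$ (with $X, Y \in \Lie{g}_\alpha$) against an arbitrary $H \in \Lie{a}$: by \eqref{eq:theta-X-Y} this pairing collapses to $-\alpha(H)(\langle DX,Y\rangle + \langle X,DY\rangle)$, which vanishes by skew-symmetry of $D$. The analogous computation on $\tilde D[X,\theta X]$, exploiting the antisymmetry of $(U,V)\mapsto P_{\Lie{m}}[\theta U,V]$, yields moreover that $\tilde D H_\alpha = 0$, so $\tilde D$ annihilates $\Lie{a}$; this will be the key ingredient in (iv) when $V \in \Lie{a}$.

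Parts (iii) and (iv) are handled by case analysis plus induction on height. The case $A, B \in \Lie{n}$ of (iii) is $D$'s derivation property; the case $A = [X,\theta Y]$ with $X, Y \in \Lie{g}_\gamma$ and $B \in \Lie{g}_\delta$ for simple $\gamma, \delta$ is exactly $(E_{\gamma,\delta})$ combined with (i); the extension to $B \in \Lie{g}_h$ with $h > 1$ is by induction, writing $B = \sum_j [B'_j, B''_j]$ with $B'_j \in \Lie{g}_1$ via Proposition \ref{prop:strat}, expanding via the Jacobi identity, and regrouping the four resulting terms back into $[\tilde D A, B] + [A, DB]$ using the inductive hypothesis and Jacobi once more; the case $A, B \in \Lie{g}_0$ follows by writing each as a bracket and reducing to what has been done. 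For (iv), a symmetric argument using $U = \sum_j [U'_j, U''_j]$ with $U'_j \in \Lie{g}_1$ (possible since $h \geq 1$) works, with the case $V \in \Lie{a}$ closing exactly by $\tilde D V = 0$ from (ii) and the case $V \in \Lie{m}$ reducing to (iii). I expect the main technical obstacle to be the well-definedness step in (i): $(E_{\gamma,\delta})$ is formulated only for $X, Y$ in a single root space, so the preliminary root-preservation of $D$ plus careful bookkeeping of the cross terms is what legitimises the decomposition and reassembly of the sum.
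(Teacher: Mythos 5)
Your proposal is correct and its architecture matches the paper's for parts (i)--(iii): well-definedness via the hypothesis $(E_{\gamma,\delta})$ and Lemma \ref{lem:W=0}, the inclusion of the range in $\Lie{m}$ by pairing against $\Lie{a}$ and invoking skew-symmetry, and the derivation property of $E$ by reduction to simple root spaces. Two of your choices differ usefully from the paper. First, your preliminary observation --- that $(E_{\gamma,\delta})$ with $X=Y$ collapses (because $[DX,\theta X]+[X,\theta DX]=0$ by skew-symmetry on the $\Lie{a}$-part and by the antisymmetry of $P_{\Lie{m}}[\,\cdot\,,\theta\,\cdot\,]$ on the $\Lie{m}$-part) to $\lip\delta,\gamma\rip DZ=[H_\gamma,DZ]$, forcing $D$ to preserve root spaces --- is not in the paper; the paper instead proves only the weaker statement that $[DX,\theta Y]+[X,\theta DY]=0$ for $X,Y$ in distinct simple root spaces, by a pairing argument. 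Your version makes the cross terms in the well-definedness step vanish identically and also yields $\tilde D\rist{\Lie{a}}=0$, which the paper never states explicitly. For (iii) the paper argues more slickly that $[D,\ad(W)]-\ad(\tilde DW)$ is a derivation of $\Lie{n}$ vanishing on the generators $\Lie{g}_1$; your height induction with Jacobi regrouping proves the same identity by hand. The one place where your route genuinely diverges, and where your ``works'' conceals the real labour, is (iv): you decompose $U\in\Lie{g}_h$ as $\sum_j[U_j',U_j'']$ with $U_j'\in\Lie{g}_1$ and induct upward, but then the Jacobi expansion produces terms such as $[U_j',\theta V]\in\Lie{g}_{1-k}$ (and $[U_j'',\theta V]\in\Lie{g}_{-1}$ when $h=k$) of negative height, where $E$ is not defined; you must rewrite these as $\theta[\theta U_j',V]$ and appeal to instances $(D_{k,1})$ and $(D_{h-1,k-1})$, so the induction has to be organised on $h+k$ rather than on $h$ alone. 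This does close, but the paper avoids the issue entirely by running a \emph{downward} induction on $h$ for $k=1$ (the identity $(D_{h,1})$ is deduced from $(D_{h+1,1})$ after bracketing with $Z\in\Lie{g}_1$ and invoking Proposition \ref{prop:strat}, the start being vacuous for large $h$), followed by an upward induction on $k$ that decomposes $V$ rather than $U$; in that scheme every argument of $E$ stays in $\Lie{g}_0\oplus\Lie{n}$ automatically. Were you to write (iv) out in full, you should either supply the $\theta$-conjugation bookkeeping and the $h+k$ ordering explicitly, or adopt the paper's downward induction.
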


\begin{proof}
 To prove (i), we first claim that if $\alpha, \beta \in \Delta$ and $\alpha \neq \beta$, then 
\begin{equation}\label{eq:DXthetaY+XthetaDY}
  [DX, \theta Y] + [X, \theta DY] = 0
 \qquad\forall X \in\Lie{g}_\alpha \quad\forall Y \in \Lie{g}_\beta.
\end{equation}
 To see this, take $W$ in $\Lie{g}_0$ of the form $[U, \theta V]$, where $U, V \in \Lie{g}_\gamma$, for some $\gamma \in \Delta$.
 Since $D$ is a skew-symmetric derivation, 
 \[
\begin{aligned}
 \lip [DX, \theta Y] + [X, \theta DY], W \rip
& = - \lip DX, [W, Y] \rip - \lip X, [W, DY] \rip \\
& =  \lip X, D[W, Y] \rip - \lip X, [W, DY] \rip \\
& =  \lip X, [[DU,\theta V]+ [U, \theta DV], Y] \rip \\
& =  - \lip [X, \theta Y],  [DU,\theta V]+ [U, \theta DV]\rip \\
&=0,
\end{aligned} 
\] 
since $[X, \theta Y] = 0$ because $\alpha - \beta$ is not a root; the third step uses $(E_{\gamma, \beta})$.
Since $\Lie{g}_0$ is spanned by elements of the form $[U, \theta V]$, our claim is established.

Now we define $L: \bigcup_{\alpha \in \Delta} \Lie{g}_\alpha \times \bigcup_{\alpha \in \Delta} \Lie{g}_\alpha  \to \Lie{g}_0$ by $L(X,Y) = [X, \theta Y]$.
Then $L$ extends automatically to a linear map, also denoted $L$, from $ \Lie{g}_1 \otimes \Lie{g}_1$ to  $\Lie{g}_0$.
Take $X_j, Y_j \in \Lie{g}_{1}$, and suppose that $\sum_j [X_j, \theta Y_j] = 0$ in $\Lie{g}_0$.
Write each $X_j$ as $\sum_{\alpha} X_{j,\alpha}$ and each $Y_j$ as $\sum_{\beta} Y_{j,\beta}$, where $X_{j,\alpha} \in \Lie{g}_\alpha$ and $Y_{j,\beta} \in\Lie{g}_\beta$; here $\alpha$ and $\beta$ range over $\Delta$.
Then
\[
\sum_j [X_j, \theta Y_j] = \sum_{j,\alpha} [X_{j,\alpha}, \theta Y_{j,\alpha}] 
\] 
since $[X_{j,\alpha}, \theta Y_{j,\beta}] = 0$ because $\alpha - \beta$ is not a root if $\alpha \neq \beta$.
If $\gamma \in \Delta$ and $W \in \Lie{g}_\gamma$, then
 \[
\sum_j [[X_j, \theta Y_j] , W] = 0
\and
\sum_j [[X_j, \theta Y_j] , DW] = 0
 \]
 by hypothesis.
Thus by $(E_{\alpha,\gamma})$, 
 \[
\begin{aligned}
 0
 &= D \sum_j [[X_j, \theta Y_j] , W] 
 = D\sum_{j,\alpha} [[X_{j,\alpha}, \theta Y_{j,\alpha}] , W] \\
 &= \sum_{j,\alpha} [[DX_{j,\alpha}, \theta Y_{j,\alpha}] , W]  + [[X_{j,\alpha}, \theta DY_{j,\alpha}] , W]  + [[X_{j,\alpha}, \theta Y_{j,\alpha}] , DW] \\
 &= \sum_{j,\alpha} \bigl[ [DX_{j,\alpha}, \theta Y_{j,\alpha}]   + [X_{j,\alpha}, \theta DY_{j,\alpha}] , W\bigr]  + \sum_{j} [[X_{j}, \theta Y_{j}] , DW] \\
 &= \sum_{j,\alpha,\beta} \bigl[ [DX_{j,\alpha}, \theta Y_{j,\beta}]   + [X_{j,\alpha}, \theta DY_{j,\beta}] , W\bigr]   ,
\end{aligned}
\]
and this shows that
\[
\sum_{j} \bigl[ [DX_{j,\alpha}, \theta Y_{j,\beta}]   + [X_{j,\alpha}, \theta DY_{j,\beta}] , W\bigr] = 0.
\]
From Lemma \ref{lem:W=0}, we see that 
\[
\sum_{j} [DX_{j,\alpha}, \theta Y_{j,\beta}]   + [X_{j,\alpha}, \theta DY_{j,\beta}] =0.
\]
It follows immediately that $\tilde D$, given by
\[
 \tilde D\sum_j [X_j,\theta Y_j] = \sum_j \bigl( [DX_j, \theta Y_j] + [X_j, \theta DY_j] \bigr),
\]
is well-defined; clearly $\tilde D$ is also unique.

To prove (ii), note that if $X, Y \in \Lie{g}_\alpha$ where $\alpha \in \Delta$, and $H \in \Lie{a}$, then 
\[
\begin{aligned}
\lip [DX, \theta Y] + [X, \theta DY] , H \rip
&= \lip DX, [H,Y] \rip + \lip X,  [H, DY] \rip  \\
&= \lip DX, [H,Y] \rip + \lip [H,X],  DY \rip \\
&= \alpha(H) \lip DX, Y \rip + \lip X,DY \rip  \\ 
&= 0 ,
\end{aligned}
\]
since $D$ is skew-symmetric.
This equality now holds for all $X, Y \in \Lie{g}_1$ by linearity and \eqref{eq:DXthetaY+XthetaDY}, and so the range of $\tilde D$ is contained in $\Lie{m}$.

We now extend $D$ and $\tilde D$ to a linear map $E$ on $\Lie{g}_0 + \Lie{n}$ by setting $E(W+X) = \tilde DW + DX$ for all $W \in \Lie{g}_0$ and all $X \in \Lie{n}$.
Since $D$ is a derivation on $\Lie{n}$, to show that $E$ is a derivation it suffices to show that
\begin{equation}\label{eq:E-der-on-n}
D[W,X] = [\tilde D W,X] +  [W,  DX]
\qquad\forall W \in \Lie{g}_0 \quad\forall X \in \Lie{n}.
\end{equation}
and
\begin{equation}\label{eq:E-der-on-m}
\tilde D [W,U] = [\tilde D W,U] + [W, \tilde DU] 
\quad\forall W, U \in \Lie{g}_0. 
\end{equation}
To prove \eqref{eq:E-der-on-n}, observe that 
\[
D[W,X] - [\tilde D W,X] -  [W,  DX] = \bigl( [D, \ad(W)] - \ad(\tilde DW) \bigr) X, 
\]
and $[D, \ad(W)] - \ad(\tilde DW) $ is a derivation. 
To show that it is $0$ on $\Lie{n}$, it suffices to show that it vanishes on $\Lie{g}_\beta$ for all simple roots $\beta$.
By linearity, it suffices to take $W$ of the form $[X, \theta Y]$ where $X, Y \in \Lie{g}_\alpha$ and $\alpha \in \Delta$; this case follows from $(E_{\alpha,\beta})$.

To prove \eqref{eq:E-der-on-m}, we may suppose by linearity that $U = [X, \theta Y] $ where $X, Y \in \Lie{g}_\alpha$ for some $\alpha \in \Delta$.
Now $\theta \tilde D \theta = \tilde D$, so
\[
\begin{aligned}
\tilde D [W, [X, \theta Y]] 
&= \tilde D[[W,X], \theta Y] + \tilde D[X, [W, \theta Y]]  \\
&= \tilde D[[W,X], \theta Y] + \tilde D[X, \theta [\theta W,Y]]  \\
&= [D[W,X], \theta Y] + [[W,X], \theta DY] \\
&\quad + [DX, [W, \theta Y]] + [X, \theta D[\theta W,Y]]  \\
&= [[\tilde DW,X], \theta Y] + [[W,DX], \theta Y] + [[W,X], \theta DY] \\
&\quad + [DX, [W, \theta Y]] + [X, \theta [\tilde D\theta W,Y]] + [X, \theta [\theta W,DY]]  \\
&= [[\tilde DW,X], \theta Y] + [[W,DX], \theta Y] + [[W,X], \theta DY] \\
&\quad + [DX, [W, \theta Y]] + [X, [\tilde D W, \theta Y]] + [X,  [ W, \theta DY]]  \\
&= [\tilde DW,[X, \theta Y]] + [W,[DX, \theta Y]] + [W,[X, \theta DY]] \\
&= [\tilde DW,[X, \theta Y]] + [W,\tilde D [X, \theta Y]] ,
\end{aligned}
\]
and \eqref{eq:E-der-on-m} holds.

Finally, we prove (iv), using induction on $h$ and $k$.
We need to prove the identity $(D_{h,k})$, given by
\[
E [X, \theta Y ] = [EX, \theta Y] + [X, \theta EY]
\qquad\forall X \in \Lie{g}_{h} \quad\forall Y \in \Lie{g}_{k}.
\]

First we suppose that $k=1$.
The identity $(D_{h,1})$ is equivalent to
\[
 [E [X, \theta Y], Z] = [[EX, \theta Y], Z] + [[X, \theta EY],Z] 
\]
for all $X \in \Lie{g}_{h}$, all $Y\in \Lie{g}_{1}$ and all $Z\in \Lie{g}_{1}$, by Proposition \ref{prop:strat}.
Write $W$ for  $[X,Z]$ in $\Lie{g}_{h+1}$.
Since $E$ is a derivation, from the Jacobi identity and the definition of $E$%\footnote{We can reduce this calculation in the final version.}
\[
\begin{aligned}{}
&   [E[X, \theta Y],Z] - [[EX, \theta Y], Z] - [[X, \theta EY],Z] \\
&\quad= E[[X, \theta Y],Z] - [[X, \theta Y], EZ] - [[EX, \theta Y], Z] - [[X, \theta EY],Z] \\
&\quad= E[[X, Z], \theta Y] + E[X, [\theta Y,Z]] - [[X, \theta Y], EZ] - [[EX, \theta Y], Z] - [[X, \theta EY],Z] \\
&\quad= E[[X, Z], \theta Y] + [EX, [\theta Y,Z]] + [X, [\theta EY,Z]] + [X, [\theta Y,EZ]]  \\
&\quad\qquad- [[X, \theta Y], EZ] - [[EX, \theta Y], Z] - [[X, \theta EY],Z] \\
%&= E[[X, Z], \theta Y] + [EX, [\theta Y,Z]] + [X, [\theta EY,Z]] + [X, [\theta Y,EZ]]  \\
%&\qquad - [[EX, \theta Y], Z] - [[X, \theta EY],Z] - [[X, \theta Y], EZ]\\
&\quad= E[[X, Z], \theta Y] + [\theta Y, [EX, Z]] +  [\theta EY, [X,Z]] + [\theta Y, [X, EZ]]  \\
&\quad= E[W, \theta Y]  - [EW,\theta Y]  - [W, \theta EY] .
\end{aligned}
\]
We deduce that if $(D_{h+1,1})$ holds, the last line vanishes, hence the first line vanishes, and $(D_{h,1})$ holds.
Since $(D_{h,1})$ holds for large positive $h$ (because there is nothing to prove as $\Lie{g}_h = \{0\}$), $(D_{h,1})$ holds for all positive $h$.

Now suppose that $(D_{h,1})$ and $(D_{h,k})$ hold where $1 \leq k < h$.
Take $X \in \Lie{g}_h$, $Y_1 \in \Lie{g}_{1}$ and $Y_2 \in \Lie{g}_{k}$. 
Then
\[
\begin{aligned}
&E [X, \theta [Y_1,Y_2] ] - [EX, \theta [Y_1,Y_2]] - [X, \theta E [Y_1,Y_2]] \\
&\quad= E [[X, \theta Y_1],\theta Y_2] + E [ \theta Y_1,[X,\theta Y_2]] - [[EX, \theta Y_1],\theta Y_2] -  [\theta Y_1, [EX,\theta Y_2]] \\
&\qquad  - [X,  [\theta EY_1,\theta Y_2]] - [X, [\theta Y_1, \theta EY_2]] \\
&\quad= [E [X, \theta Y_1],\theta Y_2] + [[X, \theta Y_1],\theta EY_2] +  [ \theta EY_1,[X,\theta Y_2]] +  [ \theta Y_1,E [X,\theta EY_2]] \\
&\qquad   - [[EX, \theta Y_1],\theta Y_2]  -  [\theta Y_1, [EX,\theta Y_2]]  -  [X,  [\theta EY_1,\theta Y_2]] - [X, [\theta Y_1, \theta EY_2]]  \\
&\quad= [ [EX, \theta Y_1],\theta Y_2] + [[X, E \theta Y_1],\theta Y_2] + [[X, \theta Y_1], \theta EY_2] +  [ \theta EY_1,[X,\theta Y_2]] \\
&\qquad   +  [ \theta Y_1, [EX,\theta Y_2]]  +  [ \theta Y_1, [X,\theta EY_2]]  - [[EX, \theta Y_1],\theta Y_2]  -  [\theta Y_1, [EX,\theta Y_2]] \\
&\qquad   -  [X,  [\theta EY_1,\theta Y_2]]  - [X, [\theta Y_1,E \theta Y_2]]\\
&\quad=  [[X, E \theta Y_1],\theta Y_2] + [[X, \theta Y_1],\theta EY_2] +  [ \theta EY_1,[X,\theta Y_2]] +  [ \theta Y_1, [X,\theta EY_2]] \\
&\qquad   -  [X,  [\theta EY_1,\theta Y_2]] - [X, [\theta Y_1,E Y_2]]\\
&\quad = 0 .
\end{aligned}
\]
By Proposition \ref{prop:strat}, $(D_{h,k+1})$ also holds.
By induction, $(D_{h,k})$ holds whenever $h \geq k \geq 0$.
 \end{proof}

\begin{theorem}\label{thm:skew-der-char}
 Suppose that $D$ is a skew-symmetric height preserving derivation of $\Lie{n}$.
  Then the following are equivalent:
 \begin{enumerate}[(i)]
 \item
 there exists a height preserving derivation $\tilde D$ of $\Lie{g}$ whose restriction to $\Lie{n}$ coincides with $D$;
 \item
 $D = \ad(W)$ for some $W \in \Lie{m}$;
 \item
$(E_{\gamma, \delta})$ holds for all $\gamma,\delta\in \Sigma^+$.
 \item
$(E_{\gamma, \delta})$ holds for all $\gamma,\delta\in\Delta$.
 \end{enumerate}
 \item
 Further,  if any of these conditions hold, then $\tilde D$ is root space preserving.
\end{theorem}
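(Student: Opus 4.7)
The plan is to prove the theorem by establishing the cycle of implications (ii) $\Rightarrow$ (iii) $\Rightarrow$ (iv) $\Rightarrow$ (i) $\Rightarrow$ (ii), and then to deduce from (ii) that $\tilde D$ preserves root spaces.

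The easy directions are (iii) $\Rightarrow$ (iv), which is trivial, and (ii) $\Rightarrow$ (iii): if $D = \ad(W)$ with $W \in \Lie{m}$, then $\ad(W)$ is a derivation of all of $\Lie{g}$ that commutes with $\theta$ (because $\theta W = W$), from which $(E_{\gamma,\delta})$ follows by iterated application of the Jacobi identity. For (i) $\Rightarrow$ (ii), I would use that every derivation of the semisimple algebra $\Lie{g}$ is inner, so $\tilde D = \ad(W)$ for some $W \in \Lie{g}$. Height preservation confines $W$ to $\Lie{g}_0 = \Lie{m} \oplus \Lie{a}$; writing $W = W_m + W_a$, the identity $\ad(Y)\transp = -\ad(\theta Y)$ shows that $\ad(W_m)$ is skew-symmetric while $\ad(W_a)$ acts as the scalar $\alpha(W_a)$ (hence is symmetric) on $\Lie{g}_\alpha$. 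Since $D$ is skew-symmetric on $\Lie{n}$, $\alpha(W_a) = 0$ for every $\alpha \in \Sigma^+$, and as the roots span the dual of $\Lie{a}$, one gets $W_a = 0$ and $W \in \Lie{m}$.

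The substantive step is (iv) $\Rightarrow$ (i). By Theorem \ref{thm:construct-der}, condition (iv) produces a linear map $E$ on $\Lie{g}_0 \oplus \Lie{n}$ that extends $D$, is a derivation there, maps $\Lie{g}_0$ into $\Lie{m}$, and satisfies the mixed identity $E[U, \theta V] = [EU, \theta V] + [U, \theta EV]$ for $U \in \Lie{g}_h$ and $V \in \Lie{g}_k$ whenever $h \geq k \geq 0$. I would extend $E$ to an endomorphism $\tilde D$ of $\Lie{g}$ by setting $\tilde D(\theta X) = \theta E X$ for $X \in \Lie{n}$, and then verify the derivation property on $\Lie{g}$ by splitting into cases under the decomposition $\Lie{g} = \theta\Lie{n} \oplus \Lie{g}_0 \oplus \Lie{n}$: cases within $\Lie{g}_0 \oplus \Lie{n}$ follow from Theorem \ref{thm:construct-der}(iii); the mixed case with one factor in $\Lie{g}_h \subset \Lie{n}$ and one in $\theta\Lie{g}_k \subset \theta\Lie{n}$ is exactly Theorem \ref{thm:construct-der}(iv) when $h \geq k$ and reduces to that case, via the identity $[U, \theta V] = \theta[\theta U, V]$ and bracket antisymmetry, when $h < k$; and cases involving two factors of $\theta\Lie{n}$ reduce to the analogous cases in $\Lie{n}$ by applying $\theta$.

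The hard part will be to verify consistency of the extension when one factor is in $\Lie{g}_0$ and the other in $\theta\Lie{n}$: this case boils down to $E$ commuting with $\theta$ on $\Lie{g}_0$, which in view of Theorem \ref{thm:construct-der}(ii) is equivalent to $E\rist{\Lie{a}} = 0$ (since $EH \in \Lie{m}$ satisfies $\theta EH = EH$ while $\theta$-equivariance would demand $\theta EH = -EH$). I expect this to follow from a short direct computation: writing $H_\gamma = -[X, \theta X]$ for a unit vector $X \in \Lie{g}_\gamma$ with $\gamma \in \Delta$ and letting $Y_\gamma$ denote the $\Lie{g}_\gamma$-component of $DX$, only $Y_\gamma$ survives in $E[X, \theta X] = [DX, \theta X] + [X, \theta DX]$ (differences of distinct simple roots are not roots), the skew-symmetry of $D$ gives $\lip Y_\gamma, X \rip = 0$ so that $[Y_\gamma, \theta X] \in \Lie{m}$, and the relation $[X, \theta Y_\gamma] = -\theta[Y_\gamma, \theta X] = -[Y_\gamma, \theta X]$ forces the two summands to cancel, giving $EH_\gamma = 0$; since the $H_\gamma$ for simple $\gamma$ span $\Lie{a}$, this yields $E\rist{\Lie{a}} = 0$. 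Height preservation of $\tilde D$ is then automatic from the construction, and for the final assertion, $\tilde D = \ad(W)$ with $W \in \Lie{m}$ preserves every $\Lie{g}_\alpha$ because $[\Lie{m}, \Lie{g}_\alpha] \subseteq \Lie{g}_\alpha$.
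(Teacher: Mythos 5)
Your proposal is correct and follows essentially the same route as the paper: the same cycle (ii)$\Rightarrow$(iii)$\Rightarrow$(iv)$\Rightarrow$(i)$\Rightarrow$(ii), with (iv)$\Rightarrow$(i) carried out by extending the map $E$ of Theorem~\ref{thm:construct-der} to all of $\Lie{g}$ via $\tilde D(\theta X)=\theta E X$ and checking the derivation identity case by case under the decomposition $\Lie{g}=\theta\Lie{n}\oplus\Lie{g}_0\oplus\Lie{n}$. If anything, your explicit verification that $E\rist{\Lie{a}}=0$ (so that the two definitions of the extension are consistent on $\Lie{g}_0$) is more careful than the paper, which attributes this consistency solely to part (ii) of Theorem~\ref{thm:construct-der}; your computation of $E[X,\theta X]$ for a unit vector $X$ in a simple root space supplies exactly the detail that makes $\theta E\theta=E$ on $\Lie{g}_0$.
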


\begin{proof}
Suppose that (i) holds.
Since all derivations of $\Lie{g}$ are inner, $\tilde D = \ad(W)$ for some $W \in \Lie{g}$.
Evidently $\ad(W)$ preserves height if and only if $W \in \Lie{g}_0$. 
Thus $W  \in \Lie{m} \oplus \Lie{a}$.
Since $D$ is skew-symmetric, $W \in \Lie{m}$, and (ii) is proved.

If (ii) holds, then the Jacobi identity and the fact that $\theta W = W$ imply that
\begin{align*}
&D[[X,\theta Y],Z] \\
&\quad= \ad(W) [[X,\theta Y],Z] \\
&\quad=  [[\ad(W)X,\theta Y],Z] + [[X \ad(W)\theta Y],Z] + [[X,\theta Y],\ad(W)Z] \\
&\quad= [[\ad(W)X,\theta Y],Z] + [[X, \theta \ad(W)Y],Z] + [[X,\theta Y],\ad(W)Z]  \\
&\quad= [[DX,\theta Y],Z] + [[X,\theta DY],Z] + [[X,\theta Y],DZ],
\end{align*}
and (iii) holds.

It is trivial that (iii) implies (iv).
 
Suppose that (iv) holds.
We are going to construct a derivation $\tilde E$ that extends the derivation $E$ of Theorem \ref{thm:construct-der} to the  simple Lie algebra $\Lie{g}$ and preserves heights.

When $X \in \Lie{g}_0 \oplus \Lie{n}$, we set $\tilde E X = EX$.
When $X \in \Lie{g}_0 \oplus \theta \Lie{n}$, we define
\begin{equation}\label{def1}
\tilde E X =  \theta E (\theta X).
\end{equation}
These definitions agree when $X \in \Lie{g}_0$ by part (ii) of Theorem \ref{thm:construct-der}.
It follows from the definition that 
\begin{equation}\label{eq:thetaDtheta}
\theta \tilde E \theta = \tilde E.
\end{equation}

Finally, to show that $\tilde D$ is a derivation, we have to verify that
\begin{equation*}%\label{der1}
\tilde D[U, V] = [\tilde DU, V] + [U, \tilde D V]
\qquad\forall U,V \in \Lie{g}.
\end{equation*}
By linearity, it suffices to demonstrate this for $U \in \Lie{g}_h$  and $V \in \Lie{g}_k$, for all possible heights $h$ and $k$.
There are various cases to consider. 
We label the relevant identity $(D_{h,k})$:
\[
\tilde D[U, V] = [\tilde DU, V] + [U, \tilde D V]
\qquad\forall U \in \Lie{g}_h \quad\forall V \in \Lie{g}_k.
\]

\subsubsubsection{Case 1: $h \geq 0$ and $k \geq 0$.}
This case is trivial as $\tilde E$ coincides with $E$ on $\Lie{g}_0 \oplus \Lie{n}$.

\subsubsubsection{Case 2: $h \leq 0$ and $k \leq 0$.}
In this case, we take $X, Y \in \Lie{g}_0 \oplus \theta\Lie{n}$, so $\theta X, \theta Y \in \Lie{g}_0 \oplus \Lie{n}$, and then
\[
\begin{aligned}
\tilde E [X, Y] 
&= \theta E [\theta X, \theta Y] = \theta [E\theta X, \theta Y]  + \theta [\theta X, E\theta Y] \\
&= [\tilde E X, Y] + [X, \tilde EY],
\end{aligned}
\]
and $(D_{h,k})$ holds.

\subsubsubsection{Case 3: $hk < 0$.} 
We need to show that 
\[\tag{$D_{h,k}$}
\tilde E [X, Y ] = [\tilde EX, Y] + [X, \tilde EY]
\qquad\forall X \in \Lie{g}_{h} \quad\forall Y \in \Lie{g}_{k}.
\]
If $h + k \geq 0$, this follows from part (iv) of Theorem \ref{thm:construct-der} and \eqref{eq:thetaDtheta}; otherwise, we conjugate by $\theta$, as in Case 2.

We conclude with the observation that if $\tilde E = \ad(Z)$ for some $Z \in \Lie{m}$ then $\tilde E$ is root space preserving.
\end{proof}

\section{Derivations of $\Lie{n}$}

We are now able to consider a nilpotent Lie algebra $\Lie{n}$ that arises in the Iwasawa decomposition of a real simple Lie algebra $\Lie{g}$.
We write $\Der{n}$ for the space of root space preserving derivations of $\Lie{n}$.

\begin{theorem}\label{thm:4.1}
If $\Lie{g}$ is simple and not isomorphic to $\Lie{so}(n, 1)$ or $\Lie{su}(n,1)$, then every $D$ in $\Der{n}$ is given by
\[
D = \ad(W),
\]
where $W \in \Lie{m}\oplus \Lie{a}$.
\end{theorem}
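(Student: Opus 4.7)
The plan is to decompose $D$ as $D = D^s + D^a$, the symmetric and skew-symmetric parts with respect to the inner product \eqref{eq:3.1} on $\Lie{n}$, and to treat each piece separately. Since every root space $\Lie{g}_\alpha$ is $D$-invariant and the root spaces are pairwise orthogonal, both $D^s$ and $D^a$ preserve the root space decomposition. To verify that they are derivations, I would check the local identity $E[X,Y] = [EX, Y] + [X, EY]$ for $X \in \Lie{g}_\gamma$, $Y \in \Lie{g}_\delta$ by working inside an $H$-type or nearly $H$-type subalgebra of $\Lie{n}$ containing $X$, $Y$ and $[X,Y]$ (supplied by Lemma \ref{lemH} and Lemma \ref{ciatti-htype}), and applying the argument of Corollary \ref{cor:2.5b} there.

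For the symmetric part $D^s$, Proposition \ref{prop:2.7} ensures that on the centre of any $H$-type subalgebra a symmetric derivation acts by a scalar. Choosing, for each root $\gamma$, an $H$-type (or nearly $H$-type) subalgebra in which $\Lie{g}_\gamma$ plays the role of the centre -- using the subalgebras supplied by Lemma \ref{lemH} -- gives $D^s\rist{\Lie{g}_\gamma} = \lambda_\gamma I_{\Lie{g}_\gamma}$ for some $\lambda_\gamma \in \R$. The derivation identity together with the nontriviality of brackets guaranteed by Lemma \ref{lem:root-space-commutators} then forces $\lambda_{\gamma+\delta} = \lambda_\gamma + \lambda_\delta$ whenever $\gamma, \delta, \gamma+\delta \in \Sigma^+$. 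Hence $\gamma \mapsto \lambda_\gamma$ extends to a linear functional on $\Lie{a}^*$, corresponding to a unique $H \in \Lie{a}$ with $D^s = \ad(H)$.

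For the skew-symmetric part $D^a$, Theorem \ref{thm:skew-der-char} reduces the problem to verifying the local identity $(E_{\gamma, \delta})$ for every pair $\gamma, \delta \in \Delta$. Such an identity only involves the subalgebra $\Lie{n}^{\{\gamma, \delta\}}$, which is the Iwasawa $\Lie{n}$ of a simple subalgebra $\Lie{g}^{\{\gamma, \delta\}}$ of rank one (if $\gamma = \delta$) or two (if $\gamma \neq \delta$). In the rank-one case $\Lie{n}^{\{\gamma\}}$ is $H$-type by Lemma \ref{ciatti-htype}; the splitting of skew-symmetric derivations in Corollary \ref{cor:2.6}, combined with the hypothesis excluding $\Lie{so}(n,1)$ and $\Lie{su}(n,1)$, shows that every skew-symmetric root-space-preserving derivation of $\Lie{n}^{\{\gamma\}}$ is inner from $\Lie{m}^{\{\gamma\}}$, giving $(E_{\gamma, \gamma})$. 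In the rank-two case one performs a case analysis over the possible root systems $A_1 \times A_1$, $A_2$, $B_2$, $BC_2$, $G_2$, localizing inside the $H$-type and nearly $H$-type subalgebras exhibited in Section~3 and applying Proposition \ref{prop:2.1} within each.

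The main obstacle is the rank-two case analysis: the $H$-type subalgebras of Lemma \ref{lemH} do not cover all of $\Lie{n}^{\{\gamma, \delta\}}$, so the ``nearly $H$-type'' pieces (where $\lip \gamma + \delta, \gamma \rip \neq 1$, as occurs in type $BC_2$) must be handled by hand, and one must keep track of which rank-one subalgebras $\Lie{g}^{\{\gamma\}}$ sit inside $\Lie{g}^{\{\gamma, \delta\}}$ and whether the excluded rank-one types appear as ideals. Once $(E_{\gamma, \delta})$ has been verified for all $\gamma, \delta \in \Delta$, Theorem \ref{thm:skew-der-char} supplies $Z \in \Lie{m}$ with $D^a = \ad(Z)$, and then $D = D^s + D^a = \ad(H + Z)$ with $H + Z \in \Lie{m} \oplus \Lie{a}$, as required.
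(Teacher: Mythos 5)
Your overall architecture coincides with the paper's: split $D$ into symmetric and skew-symmetric parts (justified by restricting to the rank-one and rank-two subalgebras $\Lie{n}^{\{\gamma,\delta\}}$ and the $H$-type machinery of Section~2), send the symmetric part to $\ad(\Lie{a})$ via Lemma~\ref{lem:5.2}, and send the skew-symmetric part to $\ad(\Lie{m})$ by verifying $(E_{\gamma,\delta})$ for simple roots and invoking Theorem~\ref{thm:skew-der-char}. The skew-symmetric half of your plan, including the rank-two case analysis you flag as the main obstacle, is essentially the paper's proof.

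There is, however, a genuine gap in your treatment of the symmetric part. You propose to show $D^s\rist{\Lie{g}_\gamma} = \lambda_\gamma I_{\Lie{g}_\gamma}$ by choosing, for each root $\gamma$, an $H$-type subalgebra of $\Lie{n}$ in which $\Lie{g}_\gamma$ is the \emph{centre}, and then applying Proposition~\ref{prop:2.7}. But the $H$-type subalgebras supplied by Lemma~\ref{lemH} have centre $\Lie{g}_{\gamma+\delta}$ (or $\Lie{g}_{2\gamma}$), i.e.\ a root space attached to a sum of two positive roots. A simple root is never such a sum, so for $\gamma \in \Delta$ there is no $H$-type subalgebra of $\Lie{n}$ with centre $\Lie{g}_\gamma$, and Proposition~\ref{prop:2.7} gives you nothing there. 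This is fatal to your argument as written, because Lemma~\ref{lem:5.2} needs scalar action precisely on the \emph{simple} root spaces; your construction produces scalars only on the non-simple ones. The paper closes this hole differently in each rank-two type: in $A_2$ it uses the transitivity statements of Lemma~\ref{A2"} (every vector of $\Lie{g}_\beta$ is $J_Z X$ for a single fixed eigenvector $X\in\Lie{g}_\alpha$, so anticommutation with the $J_Z$ forces a single eigenvalue on all of $\Lie{g}_\beta$, and then on all of $\Lie{g}_\alpha$); in $B_2$ and $BC_2$ it uses the triple bracket $[[U,X],X]\neq 0$ of Lemma~\ref{lem:UXX} to get the eigenvalue relation $\lambda + 2\mu = 0$ and then varies the eigenvectors to show the eigenvalues on $\Lie{g}_\alpha$ and $\Lie{g}_\beta$ are constant. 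Some such argument must be added to your proposal. A smaller point: you invoke the exclusion of $\Lie{so}(n,1)$ and $\Lie{su}(n,1)$ in the rank-one step for the \emph{skew-symmetric} part, but by Proposition~\ref{rango 1} one has $\SkewDer{n} = \ad(\Lie{m})$ in all rank-one cases, exclusions or not (and the rank-one subalgebras $\Lie{g}^{\{\gamma\}}$ of a non-excluded $\Lie{g}$ may themselves be isomorphic to $\Lie{su}(m,1)$, so the hypothesis could not be propagated to them anyway); the exclusion is only ever needed for the symmetric part.
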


The main theorem follows from this and Proposition \ref{prop:3.1}.

We prove Theorem \ref{thm:4.1} by showing that every derivation is the sum of a symmetric and a skew-symmetric derivation, and treating these separately.
The symmetric derivations are handled using the following lemma, which reduces matters to showing that symmetric derivations act by scalars on the root spaces.

\begin{lemma}\label{lem:5.2}
 If a derivation $D$ of $\Lie{n}$ acts by a real scalar $\lambda_\alpha$ on each root space $\Lie{g}_\alpha$ where $\alpha \in \Delta$, then $D = \ad(H)$ for some $H \in \Lie{a}$. 
\end{lemma}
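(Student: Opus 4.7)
The plan is to construct the element $H \in \Lie{a}$ directly from the scalars $\lambda_\alpha$, and then to use the fact from Proposition \ref{prop:strat} that $\Lie{n}$ is generated as a Lie algebra by $\Lie{g}_1 = \sum_{\alpha \in \Delta} \Lie{g}_\alpha$.

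First, since $\Delta$ is a basis of the dual space $\Hom(\Lie{a},\R)$ (its cardinality is the real rank of $\Lie{g}$, which equals $\dim \Lie{a}$), the system of linear equations $\alpha(H) = \lambda_\alpha$, indexed by $\alpha \in \Delta$, has a unique solution $H \in \Lie{a}$. For this $H$, the inner derivation $\ad(H)$ of $\Lie{n}$ acts on each root space $\Lie{g}_\beta$ by the scalar $\beta(H)$; in particular, for every $\alpha \in \Delta$ it acts on $\Lie{g}_\alpha$ by $\lambda_\alpha$, matching $D$.

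Second, set $D' = D - \ad(H)$. This is a derivation of $\Lie{n}$ that vanishes on $\Lie{g}_\alpha$ for every $\alpha \in \Delta$, and hence on $\Lie{g}_1$. By Proposition \ref{prop:strat}, $[\Lie{g}_1, \Lie{g}_h] = \Lie{g}_{h+1}$ for every $h \geq 1$, so a straightforward induction on height shows that $D'$ vanishes on $\Lie{g}_h$ for all $h \geq 1$: if $D'\rist{\Lie{g}_h} = 0$ and $D'\rist{\Lie{g}_1} = 0$, then for any commutator $[X,Y]$ with $X \in \Lie{g}_1$ and $Y \in \Lie{g}_h$ one has $D'[X,Y] = [D'X,Y] + [X,D'Y] = 0$, and such commutators span $\Lie{g}_{h+1}$. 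Consequently $D' = 0$ on all of $\Lie{n}$, i.e.\ $D = \ad(H)$.

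There is no serious obstacle here: the argument is a routine combination of the duality between $\Lie{a}$ and its dual (via $\Delta$) with the stratification of $\Lie{n}$ by height. The only point to note is that one must invoke Proposition \ref{prop:strat} rather than merely appeal to the fact that $\Lie{n}$ is generated by root vectors attached to simple roots, since the latter is precisely what is encoded in the stratification statement.
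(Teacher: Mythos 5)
Your proposal is correct and follows essentially the same route as the paper: the paper's proof likewise solves $\alpha(H)=\lambda_\alpha$ using that $\Delta$ is a basis of $\Hom(\Lie{a},\R)$ and then appeals to the fact that a derivation is determined by its values on the simple root spaces, which is exactly the generation-by-$\Lie{g}_1$ argument you make explicit via Proposition \ref{prop:strat}. Your version merely spells out the induction on height that the paper leaves implicit.
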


\begin{proof}
Since $D$ is a derivation, it is determined by the $\lambda_\alpha$ where $\alpha$ is simple; further, the simple roots form a basis of $\Hom(\Lie{a}, \R)$ and so there exists $H \in \Lie{a}$ such that $\alpha(H) = \lambda_\alpha$ for each simple root.
Hence $D = \ad(H)$.
\end{proof}

\begin{remark}\label{rem:4.2}	
A similar observation is valid when $\Lie{g}$ is complex and $D$ acts by a complex scalar on each root space, since every derivation of a complex Lie algebra is complex linear.
Hence Theorem \ref{thm:4.1} is trivial when $\Lie{g}$ is a split or complex Lie algebra.
In fact, in the split case (that is, when all the roots have multiplicity 1) it follows that $\Der{n} = \ad(\Lie{a})$.
In particular, Theorem \ref{thm:4.1} holds for the algebras with root system $D_n$ (where $n \geq 4$), $E_6$, $E_7$, $E_8$ or $G_2$, since  these are either split or complex, by the classification.
\end{remark}

The skew-symmetric derivations are treated using Theorem \ref{thm:skew-der-char}, which shows that a skew-symmetric derivation $D$ lies in $\ad(\Lie{m})$ when the identity $(E_{\alpha,\beta})$ holds for all simple roots $\alpha$ and $\beta$.
For convenience, we recall this identity:
\[
\tag{$E_{\alpha,\beta}$}
 D[[X, \theta Y], V] = [[DX, \theta Y], V] + [[X, \theta DY], V] + [[X, \theta Y], DV]
\]
for all $X, Y \in \Lie{g}_\alpha$ and all $V \in \Lie{g}_\beta$.

Another key ingredient of our proof, which we use in parallel with the previous obvservation, is a reduction to Lie agebras of rank at most two.
Recall that if $\Epsilon$ is a subset of $\Sigma$, then $\Lie{g}^{\Epsilon}$ denotes the subalgebra of $\Lie{g}$ generated by all the spaces $\Lie{g}_\epsilon$, where $\epsilon \in \Epsilon$.
We also denote by $\Lie{m}^{\Epsilon}$ and $\Lie{n}^{\Epsilon}$ the algebras $\Lie{m} \cap \Lie{g}^{\Epsilon}$ and $\Lie{n} \cap \Lie{g}^{\Epsilon}$, and by $\Sigma^\Epsilon$ the root subsystem $\Sigma \cap \rspan(\Epsilon)$.

Now we come to the proof proper.
Our strategy is to first consider the rank one case; this is known, and we just state what we need. 
Next, we consider the real rank two case, and the third step is to consider the case where the real rank is higher than two.

\subsection{The rank one algebras}

The algebras are well known (see, for instance, Weyl \cite{W}) and the root space preserving derivations are well known. 
We summarise the results in the following proposition for the convenience of the reader.
As the simple algebras for which $\Der{n} \neq \ad(\Lie{m} \oplus \Lie{a})$ are rank one, a case-by-case analysis is appropriate.

\begin{prop}[Riehm \cite{R}, Saal \cite{S}]\label{rango 1}
Let $\Lie{g}$ be simple Lie algebra of real rank one.
Then $\Der{n} = \SymDer{n} \oplus \SkewDer{n}$. 
Moreover,
\begin{enumerate}[(i)]
\item
If $\Lie{g} = \Lie{so}(1,n+1)$, then $\Der{n} = \Lie{sl}(n,\R) \oplus \R$. 
%In this case, $\SkewDer{n}$ coincides with $\ad(\Lie{m})$ while $\SymDer{n}$ strictly contains $\ad(\Lie{a})$.
\item
If $\Lie{g} = \Lie{su}(1,n+1)$, then $\Der{n} = \Lie{sp}(n,\R) \oplus \R$.
%In this case, $\SkewDer{n}$ coincides with $\ad( \Lie{m})$, and acts trivially on the centre of $\Lie{n}$.
%Further, $\SymDer{n}$ strictly contains $\ad(\Lie{a})$.
\item
If $\Lie{g} = \Lie{sp}(1,n+1)$, then $\Der{n} = \Lie{sp}(n-1) \oplus \Lie{sp}(1) \oplus \R$.
%Further, the summand $\Lie{sp}(n-1)$ acts trivially on the centre of $\Lie{n}$ while the summand $\Lie{sp}(1)$ is the space $\SpinDer{n}$ of derivations of $\Lie{n}$, as just after Lemma \ref{lem:2.3}.
%Finally, $\SymDer{n} = \R$, and coincides with $\ad(\Lie{a})$.
\item
If $\Lie{g} = \Lie{f}_{(4,-20)}$, then $\Der{n} = \Lie{so}(7) \oplus \R$.
\end{enumerate}
In all cases, the summand $\R$ corresponds to $\ad(\Lie{a})$.
In the first two cases, $\SymDer{n}$ strictly contains $\ad(\Lie{a})$; in the last two cases, $\SymDer{n}$ coincides with $\ad(\Lie{a})$.  
In all cases,  $\SkewDer{n}$ coincides with $\ad(\Lie{m})$.
\end{prop}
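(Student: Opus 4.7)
The plan is to reduce the whole statement to the $H$-type analysis of Section~2 and then carry out a case-by-case identification of the Clifford data. Since in real rank one $\Lie{n}$ is itself an $H$-type algebra by Lemma~\ref{ciatti-htype}, Corollary~\ref{cor:2.5b} directly gives
\[
\Der{n} = \SymDer{n} \oplus \SkewDer{n},
\]
so it suffices to describe the two summands in each case and compare them with $\ad(\Lie{a})$ and $\ad(\Lie{m})$.

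For the skew-symmetric part, Corollary~\ref{cor:2.6} writes each $D \in \SkewDer{n}$ uniquely as $D_0 + R$ with $D_0 \in \SkewDerzero{n}$ commuting on $\Lie{v}$ with every $J_Z$ and $R \in \SpinDer{n}$. Choosing an orthonormal basis of $\Lie{z}$, the $J_{Z_i}$ give a representation of a real Clifford algebra on $\Lie{v}$, so the whole task is to identify the skew centralizer of this action together with $\SpinDer{n}$ in each of the four cases. In case~(i), $\Lie{z}=\{0\}$ and $\Lie{n}$ is abelian, so $\SkewDer{n}=\Lie{so}(n)=\ad(\Lie{m})$. In case~(ii), $\dim\Lie{z}=1$ and $J_{Z_1}$ is a complex structure on $\Lie{v}$; the skew centralizer is $\Lie{u}(n)=\ad(\Lie{m})$ and $\SpinDer{n}$ is trivial. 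In case~(iii), $\dim\Lie{z}=3$, $\Lie{v}$ is quaternionic, the skew centralizer is $\Lie{sp}(n-1)$, and $\SpinDer{n}\cong\Lie{sp}(1)$ commutes with it, so the sum is $\ad(\Lie{m})$. In case~(iv), $\dim\Lie{z}=7$, $\Lie{v}\cong\R^8$, the Clifford action is the spin representation of $\Cliff_7$, $\SpinDer{n}\cong\Lie{so}(7)$, and the skew centralizer is trivial.

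For the symmetric derivations, Proposition~\ref{prop:2.7} forces $D\vert_{\Lie{z}} = 2\mu I_{\Lie{z}}$ and pairs the $\Lie{v}$-eigenspaces via $\lambda \leftrightarrow 2\mu-\lambda$. By Corollary~\ref{cor:2.8}, the scalar summand $\mu(2 P_{\Lie{z}} + P_{\Lie{v}})$ coincides with $\ad(H)$ for the unique $H\in\Lie{a}$ with the appropriate simple-root eigenvalues, so $\ad(\Lie{a}) \subseteq \SymDer{n}$; by the second half of Proposition~\ref{prop:2.1}, any additional summand is a symmetric operator on $\Lie{v}$ that anti-commutes with every $J_Z$. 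In cases~(i) and~(ii) the trivial and complex Clifford actions leave abundant room for such operators, and a direct dimension count shows that combining them with $\SkewDer{n}$ and $\ad(\Lie{a})$ one obtains $\Lie{sl}(n,\R)\oplus\R$ and $\Lie{sp}(n,\R)\oplus\R$ respectively. In cases~(iii) and~(iv) no non-zero operator on $\Lie{v}$ anti-commutes with the full quaternionic or $\Cliff_7$ action, so $\SymDer{n}=\ad(\Lie{a})$.

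The main obstacle is the case~(iv) book-keeping, i.e., recognising the Clifford module structure on $\Lie{v}\cong\R^8$ as the spin representation of $\Cliff_7$ and consequently identifying $\SpinDer{n}$ with a full copy of $\Lie{so}(7)$ (and simultaneously ruling out any anti-commuting symmetric operator). This is precisely the classification of irreducible real Clifford modules in low dimensions and is the point at which Riehm's and Saal's original arguments do the heavy lifting.
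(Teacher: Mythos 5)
Your proposal is correct and is essentially the second route the paper itself indicates: the paper's official proof is a citation to Riehm and Saal, with the remark that one may ``alternatively'' combine the $H$-type results of Section~2 with the Cowling--Dooley--Kor\'anyi--Ricci description of the rank one algebras, and your argument is precisely that alternative spelled out (Corollary~\ref{cor:2.5b} for the symmetric/skew splitting, Corollary~\ref{cor:2.6} and Proposition~\ref{prop:2.7}/Corollary~\ref{cor:2.8} for the two summands, then case-by-case identification of the Clifford data). The only point still outsourced is the classification of the relevant real Clifford modules and the identification of $\ad(\Lie{m})$ with the skew centralizer plus $\SpinDer{n}$ in each case, which you flag explicitly and which is exactly where the cited work of Riehm and Saal does the remaining work.
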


\begin{remark}
In the first two cases, $\Lie{n}$ is not rigid enough to prevent the occurrence of derivations that are not in $\ad(\Lie{m} \oplus \Lie{a})$.
\end{remark}

\begin{proof}
This follows from the work of Riehm \cite{R} and Saal \cite{S}; see also Folland \cite{F} and Pansu \cite{Pu}.
Alternatively, the reader may combine the results about $H$-type algebras with the description of the rank one simple Lie algebras in terms of $H$-type algebras by Cowling, Dooley, Kor\'anyi and Ricci \cite{CDKR2}.
\end{proof}

\begin{cor}\label{cor:Ealphaalpha}
Suppose that $\Lie{g}$ is a simple Lie algebra of arbitrary rank, and $D$ a skew-symmetric root space preserving derivation of $\Lie{n}$.
 Then the identity $(E_{\alpha,\alpha})$ holds for all positive roots $\alpha$.
\end{cor}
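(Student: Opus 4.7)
The plan is to reduce $(E_{\alpha,\alpha})$ to a statement about the rank one subalgebra $\Lie{g}^{\{\alpha\}}$ and apply the known rank one classification. For $X, Y, Z \in \Lie{g}_\alpha$, every term appearing in $(E_{\alpha,\alpha})$ lies in $\Lie{g}^{\{\alpha\}}$: indeed $[X, \theta Y] \in \Lie{g}_0$ and $[[X, \theta Y], Z] \in \Lie{g}_\alpha \subseteq \Lie{n}^{\{\alpha\}}$. Since $D$ preserves root spaces, $D\rist{\Lie{n}^{\{\alpha\}}}$ is a skew-symmetric root space preserving derivation of $\Lie{n}^{\{\alpha\}}$, which is the Iwasawa $\Lie{n}$-subalgebra of the real rank one simple subalgebra $\Lie{g}^{\{\alpha\}}$. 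Proposition \ref{rango 1} then produces $W \in \Lie{m}^{\{\alpha\}} \subseteq \Lie{m}$ such that $D\rist{\Lie{n}^{\{\alpha\}}} = \ad(W)$ (interpreted in $\Lie{g}^{\{\alpha\}}$, but equally in $\Lie{g}$ since $\Lie{g}^{\{\alpha\}}$ is a subalgebra).

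Next, since $[[X, \theta Y], Z] \in \Lie{n}^{\{\alpha\}}$, I would write $D[[X, \theta Y], Z] = \ad(W)[[X, \theta Y], Z]$, and two applications of the Jacobi identity give
\[
\ad(W)[[X, \theta Y], Z] = [[\ad(W) X, \theta Y], Z] + [[X, \ad(W) \theta Y], Z] + [[X, \theta Y], \ad(W) Z].
\]
Since $W \in \Lie{m}$, we have $\theta W = W$, so $\ad(W)$ commutes with $\theta$, and hence $\ad(W) \theta Y = \theta \ad(W) Y = \theta DY$. Combined with $\ad(W) X = DX$ and $\ad(W) Z = DZ$ (which hold because $X, Z \in \Lie{n}^{\{\alpha\}}$), this rewrites the displayed identity as $(E_{\alpha, \alpha})$.

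The substantive step is the appeal to Proposition \ref{rango 1}: once one knows that skew-symmetric root space preserving derivations of the Iwasawa $\Lie{n}$-subalgebra of a real rank one simple Lie algebra are always implemented by an element of $\Lie{m}$, the remainder is a formal consequence of the Jacobi identity and the $\theta$-invariance of $\Lie{m}$. I therefore do not anticipate a genuine obstacle in this corollary; its real content is absorbed into the rank one classification, and the corollary just records the fact that $(E_{\alpha,\alpha})$ cannot see anything outside $\Lie{g}^{\{\alpha\}}$.
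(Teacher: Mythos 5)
Your proposal is correct and follows essentially the same route as the paper: restrict $D$ to the rank one subalgebra $\Lie{n}^{\{\alpha\}}$, invoke the rank one classification (Proposition \ref{rango 1}) to write the restriction as $\ad(W)$ with $W \in \Lie{m}^{\{\alpha\}}$, and then deduce $(E_{\alpha,\alpha})$ from the Jacobi identity and $\theta W = W$. The only cosmetic difference is that the paper delegates this last computation to the implication (ii)$\Rightarrow$(iii) of Theorem \ref{thm:skew-der-char} instead of writing it out.
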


\begin{proof}
 For all roots $\alpha$, the restriction $D \rist{\Lie{n}^{\{\alpha\}}}$ is a skew-symmetric root space preserving derivation, and from Theorem \ref{thm:4.1} we deduce that $D \rist{\Lie{n}^{\{\alpha\}}} \in \ad(\Lie{m}^{\{\alpha\}})$.
 Then $(E_{\alpha,\alpha})$ holds by Theorem \ref{thm:skew-der-char}.
\end{proof}

\subsection{The rank two algebras}%\label{subsec4.2}
Let $\Lie{g}$ be a simple Lie algebra of rank two and denote by $\Lie{n}$ an Iwasawa subalgebra of $\Lie{g}$.
We shall prove that each root space preserving derivation of $\Lie{n}$ is the sum of a symmetric and a skew-symmetric derivation, that the symmetric derivation lies in $\ad(\Lie{a})$, and that the skew-symmetric part satisfies $(E_{\alpha, \beta})$ for all $\alpha$ and $\beta$ in $\Sigma^+$.

Before we analyse the various cases, we need a general result about derivations which we will use when the root system is of type $B_2$ or $BC_2$.
%The first applies to symmetric derivations, and the second to skew-symmetric derivations.
%We use this in the A_2, B_2 and BC_2 cases.  In the third case, the space $\Lie{g}_{\gamma} \oplus \Lie{g}_{\delta} \oplus \Lie{g}_{\gamma+\delta}$ is not an algebra!

\begin{lemma}\label{lem:4.4a}
Suppose that $\alpha$, $\beta$ and $\alpha+\beta$ are positive roots while $\alpha - \beta$ and $\alpha+2\beta$ are not roots, and that $D \in \Der{n}$.
Then 
\begin{equation}\label{D*-for-B_2}
D\transp  [U,X] = [D\transp  U,X] + [U, D\transp X]
\quad\forall U \in \Lie{g}_\beta \quad\forall X \in \Lie{g}_\alpha.
\end{equation}
\end{lemma}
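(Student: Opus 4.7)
The plan is first to pair the claimed identity with a vector $V \in \Lie{g}_{\alpha+\beta}$. Using $\ad(Y)\transp = -\ad(\theta Y)$ from \eqref{eq:3.2}, this translates the claim into the equivalent identity
\[
D[\theta U, V] = [\theta U, DV] - [\theta D\transp U, V],
\]
valid for all $U \in \Lie{g}_\beta$ and $V \in \Lie{g}_{\alpha+\beta}$.  Because $D$ is a derivation, the same manipulation applied to $D[U, X] = [DU, X] + [U, DX]$ gives for free the ``twin'' formula
\[
D\transp[\theta U, V] = [\theta U, D\transp V] - [\theta D U, V],
\]
which is precisely the claim with $D$ and $D\transp$ interchanged.

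Next, I would use Lemma \ref{lem:UX} to write $V = [U, Y]$ for a unique $Y \in \Lie{g}_\alpha$ and expand the relevant brackets by the Jacobi identity.  The vanishings $[\theta U, Y] = 0$ (from $\alpha - \beta \notin \Sigma$), $[U, V] = 0$ (from $\alpha + 2\beta \notin \Sigma$), together with the decomposition
\[
[U', \theta U''] = -\lip U', U''\rip H_\beta + M(U', U'')
\]
for $U', U'' \in \Lie{g}_\beta$, where $M(U', U'') \in \Lie{m}$ is antisymmetric in its arguments, should allow all remaining terms to cancel except for the residual $2\bigl[ M(U, D^s U), Y \bigr]$, where $D^s = \frac{1}{2}(D + D\transp)$ is the symmetric part of $D \rist{\Lie{g}_\beta}$.

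The hard part will be showing that this residual vanishes.  The argument would split on the type of the rank-two subsystem $\Sigma \cap (\Z\alpha + \Z\beta)$.  When $2\alpha + \beta \notin \Sigma$ (the $A_2$-like situation), the subalgebra $\Lie{g}_\beta \oplus \Lie{g}_\alpha \oplus \Lie{g}_{\alpha+\beta}$ is itself of $H$-type by Lemma \ref{lemH}, and Corollary \ref{cor:2.5b} applied to the restriction of $D$ yields the identity at once.  When $2\alpha + \beta \in \Sigma$, one instead appeals to Corollary \ref{cor:2.5b} on one of the $H$-type subalgebras catalogued at the end of Section~3, such as $\Lie{g}_\beta \oplus \Lie{g}_{\alpha+\beta} \oplus \Lie{g}_{2\alpha+\beta} \oplus \Lie{g}_{2\alpha+2\beta}$ in type $BC_2$, to impose on $D^s \rist{\Lie{g}_\beta}$ the compatibility needed to kill $[M(U, D^s U), \Lie{g}_\alpha]$.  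The genuine obstacle is precisely this asymmetry in the hypotheses: unlike Corollary \ref{cor:3.7}, we do not have both $\alpha + 2\beta$ and $2\alpha + \beta$ absent, so the argument cannot be closed symmetrically and must invoke an auxiliary $H$-type subalgebra structure.
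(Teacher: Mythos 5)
Your reduction is sound and, up to bookkeeping, reproduces what the paper's computation actually yields: pairing against $V=[U,Y]$ (legitimate by Lemma \ref{lem:UX}), expanding by Jacobi, and using $[\theta U,Y]=0$ together with the $\Lie{a}\oplus\Lie{m}$ decomposition of $[\Lie{g}_\beta,\theta\Lie{g}_\beta]$, one finds that the $\Lie{a}$-contributions cancel against the term $\lip[U,X],[U,DY]\rip$ versus $\lip[U,D\transp X],[U,Y]\rip$, leaving exactly the residual you name, a bracket of $\Lie{g}_\alpha$ with the $\Lie{m}$-part $M(U,D^sU)$ of $[\theta U,(D+D\transp)U]$. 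The gap is in the last step, which you flag as ``the hard part'' and then only gesture at. The strategy you propose there is both unsubstantiated and misdirected: Corollary \ref{cor:2.5b} applied to an $H$-type subalgebra such as $\Lie{g}_\beta\oplus\Lie{g}_{\alpha+\beta}\oplus\Lie{g}_{2\alpha+\beta}\oplus\Lie{g}_{2\alpha+2\beta}$ constrains brackets \emph{within} that subalgebra, and says nothing obvious about $[M(U,D^sU),\Lie{g}_\alpha]$, since $\Lie{g}_\alpha$ lies outside it; no ``compatibility condition'' on $D^s\rist{\Lie{g}_\beta}$ is identified that would kill the residual.

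The missing idea is elementary and is already contained in your own observations. The identity \eqref{D*-for-B_2} is linear in $U$, and $D^s\rist{\Lie{g}_\beta}$ is symmetric, hence diagonalisable with eigenvectors spanning $\Lie{g}_\beta$; so it suffices to verify the identity when $U$ is an eigenvector of $D^s\rist{\Lie{g}_\beta}$, say $D^sU=\lambda U$, and then $M(U,D^sU)=\lambda M(U,U)=0$ by the very antisymmetry of $M$ that you record. No case split on the rank-two subsystem and no auxiliary $H$-type subalgebra is needed, and the asymmetry of the hypotheses (only $\alpha+2\beta$ absent, not $2\alpha+\beta$) causes no difficulty. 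The paper closes the argument in essentially this way, with one extra preliminary step: it invokes Proposition \ref{rango 1} on the rank-one subalgebra $\Lie{n}^{\{\beta\}}$ to write the skew part of $D\rist{\Lie{g}_\beta}$ as $\ad(Z)$ with $Z\in\Lie{m}^{\beta}$ and subtracts it, so that $D\rist{\Lie{g}_\beta}$ may be assumed symmetric and $DU=D\transp U=\lambda U$ on eigenvectors, after which the two problematic terms visibly coincide. Your ``twin formula'' paragraph is correct but is just a restatement of the derivation property of $D$ and plays no role.
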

\begin{proof}
By Proposition \ref{rango 1}, the skew-symmetric part of the restriction of $D$ to $\Lie{g}_\beta$ coincides with $\ad(Z)$ for some $Z$ in $\Lie{m}^{\beta}$. 
Write $D_0$ for $D - \ad(Z)$.
Since $\ad(Z)$ is a skew-symmetric derivation, $D$ satisfies \eqref{D*-for-B_2} if and only if $D_0$ does. 
Thus, by replacing $D$ by $D_0$ if necessary, there is no loss of generality in assuming that the restriction of $D$ to $\Lie{g}_\beta$  is symmetric.

We need to show \eqref{D*-for-B_2}.
The eigenvectors of $D \rist{\Lie{g}_\beta}$ span $\Lie{g}_\beta$, and so by linearity it will suffice to show \eqref{D*-for-B_2} when $U$ is an eigenvector of $D$ and $X$ is arbitrary.
Take $U$ in $\Lie{g}_\beta\setminus\{0\}$ and $\lambda \in \R$ such that $DU = \lambda U$.
By Lemma \ref{lem:UX}, $\ad(U): \Lie{g}_\alpha \to \Lie{g}_{\alpha+\beta}$ is surjective, and so it will suffice to show that
\[
\lip D\transp [U, X] , [U, Y ] \rip = \lip  [D\transp U, X] , [U, Y] \rip  + \lip  [U, D\transp X] , [U, Y] \rip 
\]
for the eigenvector $U$ in $\Lie{g}_\beta$ and arbitrary $X$ and $Y$ in $\Lie{g}_\alpha$.
Now, by the hypothesis that $D$ is a root space preserving derivation, the choice of $U$, \eqref{eq:3.2}, the Jacobi identy, and the fact that $\alpha - \beta$ is not a root, the left hand side is equal to
\begin{align*}
\lip  [U, X] , D[U, Y ] \rip 
&= \lip  [U, X] , [DU, Y ] \rip  + \lip  [U, X] , [U, DY ] \rip  \\
&= \lip  [U, X] , [\lambda U, Y ] \rip  - \lip  X , [\theta U, [U, DY ]] \rip  \\
&= \lip  [\lambda U, X] , [U, Y ] \rip  - \lip  X ,  [[\theta U, U], DY ] \rip - \lip  X ,  [U, [\theta U, DY ]] \rip  \\
&= \lip  [\lambda U, X] , [U, Y ] \rip  - \lip  X ,  \alpha ([\theta U, U]) DY \rip  \\
&= \lip  [D\transp U, X] , [U, Y ] \rip  - \alpha ([\theta U, U])  \lip D\transp X , Y  \rip  \\
\end{align*}
and similarly $\lip  [U, D\transp X] , [U, Y] \rip$ is equal to
\begin{align*}
-\lip  D\transp X , [\theta U, [U, Y ]] \rip 
&= -\lip  D\transp X ,  [[\theta U,U], Y ] \rip -  \lip  D\transp X , [U, [\theta U, Y ]] \rip  \\
&= -\lip  D\transp X ,  \alpha([\theta U,U]) Y \rip   \\
&= - \alpha ([\theta U, U])  \lip  D\transp X , Y  \rip  .
\end{align*}
The result now follows.
\end{proof}

\subsubsection{The case $A_2$.}
Until further notice, we assume that $\Lie{g}$ has root system $A_2$, the simplest indecomposible root system of rank $2$.
We label the simple roots $\alpha$ and $\beta$, so that the highest root is $\alpha+\beta$ and $\Sigma^+ = \{ \alpha, \beta, \alpha+\beta\}$.
With the notation of Lemma \ref{ciatti-htype}, $\Sigma_1 = \{\alpha, \beta\}$ and $\Sigma_0 = \varnothing$.
We shall use the result of Ciatti \cite[Proposition 4.1]{C3} about the structure of $\Lie{n}$, giving a proof for the convenience of the reader.

\begin{lemma}\label{A2"}%[Proposition 4.1 in \cite{C3}]
For every nontrivial $X$ in $\Lie{g}_\alpha$,
\begin{equation}\label{A2}
\Lie{g}_{\beta}=\{J_Z X : Z\in \Lie{g}_{\alpha+\beta} \}
\end{equation}
and
\begin{equation}\label{A2'}
\Lie{g}_{\alpha}=\{J_{Z'}J_Z X : Z,Z'\in \Lie{g}_{\alpha+\beta} \} .
\end{equation}
\end{lemma}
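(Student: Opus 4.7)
My plan is to exploit the $H$-type structure on $\Lie{g}_\alpha \oplus \Lie{g}_\beta \oplus \Lie{g}_{\alpha+\beta}$. In the $A_2$ root system, $\alpha-\beta$, $\alpha+2\beta$, $2\alpha+\beta$, $2\alpha$ and $2\beta$ are all non-roots, so the hypotheses of Lemma \ref{lemH} are satisfied with $\gamma=\alpha$ and $\delta=\beta$, and also with $\gamma$ and $\delta$ interchanged.  Consequently $J_Z$ (in the sense of Lemma \ref{lemH0}) is a well defined linear map that exchanges $\Lie{g}_\alpha$ and $\Lie{g}_\beta$, and satisfies $J_Z^2 W = -\|Z\|^2 W$ for all $W \in \Lie{g}_\alpha \oplus \Lie{g}_\beta$ and $Z \in \Lie{g}_{\alpha+\beta}$.

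For \eqref{A2}, fix a nonzero $X \in \Lie{g}_\alpha$ and consider the linear map $\Psi_X \colon \Lie{g}_{\alpha+\beta} \to \Lie{g}_\beta$ given by $\Psi_X(Z) = J_Z X$.  If $J_Z X = 0$, then
\[
0 = J_Z(J_Z X) = J_Z^2 X = -\|Z\|^2 X,
\]
and since $X \neq 0$ this forces $Z = 0$, so $\Psi_X$ is injective.  Lemma \ref{lem:UX}, applied first with $(\gamma,\delta)=(\alpha,\beta)$ and then with $(\gamma,\delta)=(\beta,\alpha)$, shows that $\dim(\Lie{g}_\alpha) = \dim(\Lie{g}_{\alpha+\beta}) = \dim(\Lie{g}_\beta)$.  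Hence $\Psi_X$ is a bijection onto $\Lie{g}_\beta$, which is \eqref{A2}.

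For \eqref{A2'}, pick any nonzero $Z_0 \in \Lie{g}_{\alpha+\beta}$ and set $Y = J_{Z_0} X$; by \eqref{A2} and the injectivity of $\Psi_X$, $Y$ is a nonzero element of $\Lie{g}_\beta$.  Applying \eqref{A2} with the roles of $\alpha$ and $\beta$ interchanged (and with $Y$ in place of $X$) yields
\[
\Lie{g}_\alpha = \{ J_{Z'} Y : Z' \in \Lie{g}_{\alpha+\beta} \} = \{ J_{Z'} J_{Z_0} X : Z' \in \Lie{g}_{\alpha+\beta} \},
\]
which is contained in the right-hand side of \eqref{A2'}; the reverse inclusion is clear since each $J_{Z'} J_Z X$ lies in $\Lie{g}_\alpha$.

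I do not expect any significant obstacle here: once the $H$-type identity $J_Z^2 = -\|Z\|^2 I$ is in hand on $\Lie{g}_\alpha \oplus \Lie{g}_\beta$, injectivity of $\Psi_X$ is automatic, and the equidimensionality of the three root spaces (from Lemma \ref{lem:UX}) promotes injectivity to surjectivity.  The second formula is then just the first applied twice.
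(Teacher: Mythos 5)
Your proof is correct, but for the first identity it runs along a genuinely different track from the paper's. The paper proves $\Lie{g}_\beta \subseteq \{J_Z X : Z \in \Lie{g}_{\alpha+\beta}\}$ constructively: normalising $X$ to unit length, it exhibits an explicit preimage of a given $Y \in \Lie{g}_\beta$ by computing $J_{[Y,X]}X$ via the Jacobi identity and the fact that $\beta - \alpha$ is not a root, so that $J_{[Y,X]}X$ is a nonzero multiple of $Y$. You instead establish surjectivity of $Z \mapsto J_Z X$ by an injectivity-plus-dimension-count argument: the $H$-type identity $J_Z^2 = -\Vert Z\Vert^2 I$ forces injectivity, and Lemma \ref{lem:UX} (applied with the roles of $\alpha$ and $\beta$ in each order) supplies $\dim(\Lie{g}_{\alpha+\beta}) = \dim(\Lie{g}_\beta)$. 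Both routes are legitimate; the paper's is more explicit and self-contained within the $H$-type calculus (and its mechanism reappears elsewhere in the paper), while yours trades the Jacobi computation for an appeal to Lemma \ref{lem:UX} and is arguably cleaner, sidestepping any bookkeeping of the constant $\beta([\theta X, X])$. For the second identity the two arguments essentially coincide: the paper fixes $Z'$ and uses that $J_{Z'}$ is an isomorphism carrying $\Lie{g}_\beta$ onto $\Lie{g}_\alpha$, whereas you fix $Z_0$ and rerun \eqref{A2} with $\alpha$ and $\beta$ interchanged; the symmetry of the $A_2$ configuration makes that interchange legitimate, so no gap arises there.
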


\begin{proof}
We may and shall assume that $X$ is a unit vector, and take $Y \in \Lie{g}_\beta$.
By \eqref{eq:3.10} and the Jacobi identity, 
\[
   J_{[Y,X]}X= [[Y,X], \theta X]
= [[\theta X,X], Y]
= \beta([\theta X, X]) Y
= Y,
\]
which proves that $Y \in \{J_Z X : Z \in \Lie{g}_{\alpha+\beta}\} $.

Now, by \eqref{eq:3.11}, $J_Z$ is a linear isomorphism that exchanges $\Lie{g}_{\beta}$ and $\Lie{g}_{\alpha}$ for all nonzero $Z$ in $\Lie{g}_{\alpha+\beta}$, so \eqref{A2'} follows from \eqref{A2}.
\end{proof}

\begin{prop}\label{prop:A2-sum} 
Every root space preserving derivation of $\Lie{n}$ is the sum of a symmetric and a skew-symmetric derivation.
\end{prop}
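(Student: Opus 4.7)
The plan is to apply Corollary~\ref{cor:2.5b} directly and then observe that the symmetric/skew-symmetric decomposition it produces respects not merely the $\Lie{v}$--$\Lie{z}$ grading but the finer decomposition into root spaces. By Lemma~\ref{ciatti-htype}, the algebra $\Lie{n} = \Lie{g}_\alpha \oplus \Lie{g}_\beta \oplus \Lie{g}_{\alpha+\beta}$ is $H$-type, with centre $\Lie{z} = \Lie{g}_{\alpha+\beta}$ and orthogonal complement $\Lie{v} = \Lie{g}_\alpha \oplus \Lie{g}_\beta$. Any $D \in \Der{\Lie{n}}$ in the sense of this paper (root space preserving) is, \emph{a fortiori}, grading preserving, so Corollary~\ref{cor:2.5b} applies and yields $D\transp \in \Der{\Lie{n}}$; consequently
\[
D = D^s + D^a,
\quad\text{where } D^s = \tfrac12 (D + D\transp) \in \SymDer{\Lie{n}} \text{ and } D^a = \tfrac12 (D - D\transp) \in \SkewDer{\Lie{n}}.
\]

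The only thing left is to promote ``grading preserving'' to ``root space preserving'' for $D^s$ and $D^a$; for this it suffices to show that $D\transp$ preserves each of $\Lie{g}_\alpha$ and $\Lie{g}_\beta$ (preservation of $\Lie{z} = \Lie{g}_{\alpha+\beta}$ is built into the grading). Distinct root spaces are mutually orthogonal with respect to $\lip\cdot,\cdot\rip$, as noted just after \eqref{eq:3.1}. Hence for $\gamma, \delta \in \{\alpha, \beta\}$ with $\gamma \neq \delta$ and any $X \in \Lie{g}_\gamma$, $Y \in \Lie{g}_\delta$,
\[
\lip D\transp X, Y \rip = \lip X, DY \rip = 0,
\]
because $DY \in \Lie{g}_\delta \perp \Lie{g}_\gamma$. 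Similarly $\lip D\transp X, Z \rip = \lip X, DZ \rip = 0$ for $Z \in \Lie{g}_{\alpha+\beta}$, so $D\transp X$ is orthogonal, inside $\Lie{n}$, to every root space except $\Lie{g}_\gamma$, and therefore lies in $\Lie{g}_\gamma$.

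In short, the whole content of the proposition is packaged in Corollary~\ref{cor:2.5b}, and the only additional observation needed is the orthogonality of distinct root spaces. There is no real obstacle here; the genuine work in the $A_2$ case will come afterwards, when we must analyse $\SymDer{\Lie{n}}$ and $\SkewDer{\Lie{n}}$ separately and show that each is $\ad(\Lie{m}\oplus\Lie{a})$.
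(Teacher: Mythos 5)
Your proof is correct and follows the same route as the paper: identify $\Lie{n}$ as $H$-type via Lemma~\ref{ciatti-htype} and invoke Corollary~\ref{cor:2.5b}. The extra observation that $D\transp$ preserves each individual root space (by orthogonality of distinct root spaces) is a detail the paper leaves implicit, and it is a worthwhile addition since the proposition concerns root space preserving derivations rather than merely grading preserving ones.
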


\begin{proof}
By Lemma \ref{ciatti-htype},  $\Lie{n}$ is $H$-type.
The result follows from Corollary~\ref{cor:2.5b}.
\end{proof}

\begin{prop}\label{prop:A2-sym} 
Every symmetric root space preserving derivation $D$ of $\Lie{n}$ lies in $\ad( \Lie{a})$.
\end{prop}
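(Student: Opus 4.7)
The plan is to exploit the $H$-type structure of $\Lie{n}$ together with Lemma \ref{A2"} to show that $D$ must act by a scalar on each simple root space, at which point Lemma \ref{lem:5.2} finishes the job.

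First I would apply Corollary \ref{cor:2.8}: since $D\in\SymDer{n}$, its restriction to $\Lie{z}=\Lie{g}_{\alpha+\beta}$ is a scalar $2\mu I_{\Lie{z}}$, and on $\Lie{v}=\Lie{g}_\alpha\oplus\Lie{g}_\beta$ the eigenspaces pair up with eigenvalues summing to $2\mu$. Because $D$ preserves root spaces, the subspaces $\Lie{g}_\alpha$ and $\Lie{g}_\beta$ are $D$-invariant, so each decomposes into $D$-eigenspaces.

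Next, pick any nontrivial eigenvector $X\in\Lie{g}_\alpha$ with $DX=\lambda X$. By the first formula in \eqref{eq3.2} of Proposition \ref{prop:2.7}, for every $Z\in\Lie{g}_{\alpha+\beta}$ we have $DJ_Z X=(2\mu-\lambda)J_Z X$. By Lemma \ref{A2"}, however, $\Lie{g}_\beta=\{J_Z X:Z\in\Lie{g}_{\alpha+\beta}\}$, so the \emph{entire} root space $\Lie{g}_\beta$ is contained in the $(2\mu-\lambda)$-eigenspace of $D$. Running the same argument with a second eigenvector $X'\in\Lie{g}_\alpha$ of eigenvalue $\lambda'$ forces $\Lie{g}_\beta$ also to lie in the $(2\mu-\lambda')$-eigenspace, hence $\lambda=\lambda'$. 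Therefore $D$ acts on $\Lie{g}_\alpha$ by a single scalar $\lambda_\alpha$, and symmetrically on $\Lie{g}_\beta$ by a scalar $\lambda_\beta$, with $\lambda_\alpha+\lambda_\beta=2\mu$ consistent with the derivation property applied to $[\Lie{g}_\alpha,\Lie{g}_\beta]=\Lie{g}_{\alpha+\beta}$.

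Finally, Lemma \ref{lem:5.2} immediately yields $D=\ad(H)$ for some $H\in\Lie{a}$, as required. I do not anticipate a real obstacle here: once the scalar action on simple root spaces is extracted, everything is mechanical. The one point that demands a line of care is step three — observing that Lemma \ref{A2"} legitimately fills out all of $\Lie{g}_\beta$ from a \emph{single} eigenvector $X$, which is exactly what makes the eigenvalue $\lambda$ independent of the choice of $X$.
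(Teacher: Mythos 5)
Your proposal is correct and follows essentially the same route as the paper's proof: both exploit the $H$-type structure of $\Lie{n}$ via Lemma \ref{A2"} to show that a single eigenvector $X\in\Lie{g}_\alpha$ sweeps out all of $\Lie{g}_\beta$ under the maps $J_Z$, forcing $D$ to act by a scalar on each simple root space, and then invoke Lemma \ref{lem:5.2}. The only cosmetic difference is that the paper first subtracts $\ad(H)$ so that the residual derivation vanishes on the centre and anticommutes with the $J_Z$ (Proposition \ref{prop:2.1}), whereas you keep the eigenvalue $2\mu$ on $\Lie{g}_{\alpha+\beta}$ and use the eigenvalue formula of Proposition \ref{prop:2.7} directly.
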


\begin{proof}
From Corollary~\ref{cor:2.8}, $D$ is the sum of a symmetric derivation $D_0$ that vanishes on $\Lie{g}_{\alpha+\beta}$ and $\ad(H)$ for some $H$ in $\Lie{a}$.
Since $D_0$ is symmetric and preserves root spaces, we may take an eigenvector $X$ of $D_0$ in $\Lie{g}_\alpha$ with corresponding eigenvalue $\lambda$.
By Proposition \ref{prop:2.1} and Lemma \ref{A2"}, $D_0$ anticommutes with the maps $J_Z$ and so acts as $-\lambda$ on $\Lie{g}_\beta$, and hence as $\lambda$ on $\Lie{g}_\alpha$. 
This implies that $D_0$ acts as $\lambda$ on $\Lie{g}_\alpha$ and $-\lambda$ on $\Lie{g}_\beta$, so $D_0$ lies in $\ad(\Lie{a})$ by Lemma \ref{lem:5.2}.
\end{proof}

\begin{prop}\label{prop:A2-skew} 
The basic derivation identity $(E_{\gamma,\delta})$ holds as $\gamma$ and $\delta$ range over the set $\{\alpha, \beta\}$ of simple roots.
Consequently, every derivation $D$ in $\SkewDer{n}$ is equal to $\ad(Z)$ for some $Z$ in $\Lie{m}$.
\end{prop}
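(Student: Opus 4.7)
By Corollary~\ref{cor:Ealphaalpha}, the identities $(E_{\alpha,\alpha})$ and $(E_{\beta,\beta})$ hold for any simple $\Lie{g}$, so the work lies in verifying $(E_{\alpha,\beta})$ and $(E_{\beta,\alpha})$; these are interchanged by the symmetry of $A_2$ that swaps $\alpha$ and $\beta$, so it suffices to address the first. My plan is to reduce $(E_{\alpha,\beta})$ to the Clifford-type identity of Proposition~\ref{prop:2.1}, using the $H$-type structure of $\Lie{n}$ provided by Lemma~\ref{ciatti-htype}, and then to invoke Theorem~\ref{thm:skew-der-char} to conclude.

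The special feature of $A_2$ is that $\beta-\alpha$ is not a root, so $[\theta Y, V]=0$ for all $Y\in\Lie{g}_\alpha$ and all $V\in\Lie{g}_\beta$. Applying the Jacobi identity to each triple bracket appearing in $(E_{\alpha,\beta})$ and discarding the vanishing terms, I would obtain
\[
 [[X,\theta Y],V] = [[X,V],\theta Y],
\]
together with the analogous reductions for the three summands on the right-hand side. Writing $W=[X,V]\in\Lie{g}_{\alpha+\beta}$ and combining $[[DX,V],\theta Y]+[[X,DV],\theta Y]=[DW,\theta Y]$ via the fact that $D$ is a derivation of $\Lie{n}$, the identity $(E_{\alpha,\beta})$ rearranges to
\[
 D[W,\theta Y] = [DW,\theta Y] + [W,\theta DY].
\]
By Lemma~\ref{lem:root-space-commutators}, every element of $\Lie{g}_{\alpha+\beta}$ has the form $[X,V]$ for some $X\in\Lie{g}_\alpha$ and $V\in\Lie{g}_\beta$, so it suffices to verify this reformulation for arbitrary $W\in\Lie{g}_{\alpha+\beta}$ and $Y\in\Lie{g}_\alpha$.

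By Lemma~\ref{ciatti-htype}, $\Lie{n}$ is $H$-type with centre $\Lie{g}_{\alpha+\beta}$, and the operator $J_W$ defined in \eqref{eq:3.10} satisfies $J_W Y=[W,\theta Y]$ for $Y\in\Lie{v}=\Lie{g}_\alpha\oplus\Lie{g}_\beta$. Hence the displayed reformulation amounts to the operator identity $DJ_W - J_W D = J_{DW}$ on $\Lie{v}$, which is precisely the skew-symmetric case $D\transp=-D$ of equation~\eqref{eq:2.4}; this holds by Proposition~\ref{prop:2.1} because $D$ is a grading-preserving derivation of the $H$-type algebra $\Lie{n}$. Having verified $(E_{\gamma,\delta})$ for all $\gamma,\delta\in\{\alpha,\beta\}$, Theorem~\ref{thm:skew-der-char} then yields $D=\ad(Z)$ for some $Z\in\Lie{m}$, completing the proof.

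The only genuine obstacle is the bookkeeping in the Jacobi reduction: one must be careful to track which terms actually vanish and to apply Lemma~\ref{lem:root-space-commutators} to obtain full generality in $W$. Once that reduction is in hand, the identification with Proposition~\ref{prop:2.1} is immediate.
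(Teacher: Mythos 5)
Your proposal is correct and follows essentially the same route as the paper: the identities $(E_{\alpha,\alpha})$, $(E_{\beta,\beta})$ from Corollary~\ref{cor:Ealphaalpha}, the Jacobi reduction of $(E_{\alpha,\beta})$ (using that $\beta-\alpha$ is not a root) to $D[W,\theta Y]=[DW,\theta Y]+[W,\theta DY]$ with $W\in\Lie{g}_{\alpha+\beta}$, the identification of this with the skew-symmetric case of \eqref{eq:2.4} via $J_W$ and the $H$-type structure of Lemma~\ref{ciatti-htype}, the $\alpha\leftrightarrow\beta$ symmetry for $(E_{\beta,\alpha})$, and the appeal to Theorem~\ref{thm:skew-der-char}. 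The invocation of Lemma~\ref{lem:root-space-commutators} is harmless but not needed, since proving the reformulated identity for arbitrary $W$ already covers the case $W=[X,V]$.
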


\begin{proof}
Recall the basic derivation identity $(E_{\gamma,\delta})$, that is, the identity
\[
D[[X,\theta Y],Z] = [[DX,\theta Y, Z] + [[X,\theta DY],Z] + [[X,\theta Y],DZ]
\]
for all $X, Y \in \Lie{g}_\gamma$ and all $Z \in \Lie{g}_\delta$.
By Theorem \ref{thm:skew-der-char}, it suffices to prove $(E_{\gamma,\delta})$ as $\gamma$ and $\delta$ range over $\{\alpha, \beta\}$.
The identities $(E_{\alpha,\alpha})$ and $(E_{\beta,\beta})$ hold by Corollary \ref{cor:Ealphaalpha}.

Now we prove $(E_{\alpha,\beta})$. 
Suppose that $X, Y  \in \Lie{g}_\alpha$ and $Z \in \Lie{g}_\beta$.
Since $\beta - \alpha$ is not a root,
\begin{align*}
&D[[X,\theta Y],Z] - [[DX,\theta Y], Z] - [[X,\theta DY],Z] - [[X,\theta Y],DZ] \\
&\quad= D[[X,Z], \theta Y] - [[DX, Z], \theta Y] - [[X,Z], \theta DY] - [[X, DZ] ,\theta Y] \\ 
&\quad= D[[X,Z], \theta Y] - [D[X, Z], \theta Y] - [[X,Z], \theta DY]  \\ 
&\quad= D[W, \theta Y] - [DW, \theta Y] - [W, \theta DY]  ,
\end{align*}
where $W =  [X,Z] \in \Lie{g}_{\alpha+\beta}$; it will suffice to prove that this is $0$ for all $W \in \Lie{g}_{\alpha+\beta}$ and all $Y \in \Lie{g}_\alpha$.
By the definition of $J_W$, for $W \in \Lie{g}_{\alpha+\beta}$, we may rewrite the last expression as
\[
DJ_W Y - J_{DW} Y - J_W DY,
\]
and since $D$ is a skew-symmetric derivation, this is $0$ by \eqref{eq:2.4}.

We exchange the roles of $\alpha$ and $\beta$ to prove the remaining identity.
\end{proof}

\subsubsection{The case $B_2$.}
Until further notice, we assume that $\Lie{g}$ has root system $B_2$.
We denote by $\alpha$ and $\beta$ the simple roots, with $\beta$ longer than $\alpha$.
Hence $\omega =2\alpha+\beta$ and $\Sigma^+ = \{\alpha, \beta, \alpha+\beta, 2\alpha+\beta\}$.

The first proposition is the basic result: it establishes that every element of $\Der{n}$ is a sum of a symmetric and a skew-symmetric derivation.

\begin{prop}\label{pro:B2-sum}  
If $D$ is in $\Der{n}$, then its transpose $D\transp $ is also in $\Der{n}$.
\end{prop}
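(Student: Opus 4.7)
The plan is to verify that $D\transp$ preserves each root space and satisfies the Leibniz rule on $\Lie{n}$. Root space preservation is immediate: the root space decomposition is orthogonal, and $D(\Lie{g}_\gamma) \subseteq \Lie{g}_\gamma$ for every $\gamma \in \Sigma^+$ automatically implies $D\transp(\Lie{g}_\gamma) \subseteq \Lie{g}_\gamma$. For the Leibniz rule, by linearity and root space preservation it suffices to check
\[
D\transp[X,Y] = [D\transp X, Y] + [X, D\transp Y]
\]
for $X \in \Lie{g}_\gamma$, $Y \in \Lie{g}_\delta$ with $\gamma, \delta \in \Sigma^+$ and $\gamma + \delta \in \Sigma^+$; in all other configurations both sides vanish trivially.

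In $B_2$ with $\Sigma^+ = \{\alpha, \beta, \alpha+\beta, 2\alpha+\beta\}$, inspection shows that, up to antisymmetry of the bracket, only two pairs $(\gamma, \delta)$ produce a root: $(\alpha, \beta)$, whose bracket lies in $\Lie{g}_{\alpha+\beta}$, and $(\alpha, \alpha+\beta)$, whose bracket lies in $\Lie{g}_{2\alpha+\beta}$. All other candidate sums of positive roots ($2\alpha$, $2\beta$, $3\alpha+\beta$, $\alpha+2\beta$, $2\alpha+2\beta$, $3\alpha+2\beta$) fall outside $\Sigma$, so the corresponding brackets vanish.

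The first pair is settled by Lemma \ref{lem:4.4a}: the hypotheses $\alpha - \beta \notin \Sigma$ and $\alpha + 2\beta \notin \Sigma$ hold in $B_2$ (with $\alpha$ short and $\beta$ long), and the lemma delivers exactly $D\transp[U,X] = [D\transp U,X] + [U, D\transp X]$ for $U \in \Lie{g}_\beta$ and $X \in \Lie{g}_\alpha$.

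The second pair is handled by exploiting the $H$-type structure of the subalgebra
\[
\Lie{h} := \Lie{g}_\alpha \oplus \Lie{g}_{\alpha+\beta} \oplus \Lie{g}_{2\alpha+\beta}.
\]
Applying Lemma \ref{lemH} with $\gamma = \alpha$ and $\delta = \alpha+\beta$ identifies $\Lie{h}$ as an $H$-type algebra with centre $\Lie{g}_{2\alpha+\beta}$, since $3\alpha+\beta$, $3\alpha+2\beta$, $2\alpha$ and $2\alpha+2\beta$ are all absent from $\Sigma$. Because $D$ preserves each root space, it preserves $\Lie{h}$, and $D\rist{\Lie{h}}$ is a grading preserving derivation of this $H$-type algebra. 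By Corollary \ref{cor:2.5b}, $(D\rist{\Lie{h}})\transp$ is again a derivation of $\Lie{h}$; and since $\Lie{h}$ is orthogonal to its complement $\Lie{g}_\beta$ in $\Lie{n}$, this transpose coincides with $D\transp\rist{\Lie{h}}$. The Leibniz identity for $D\transp$ on $\Lie{g}_\alpha \times \Lie{g}_{\alpha+\beta}$ follows. No serious obstacle is anticipated: both non-trivial bracket pairs are dispatched by machinery already developed in the paper, and the exhaustive case analysis is forced by the combinatorics of $B_2$.
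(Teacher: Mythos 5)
Your proposal is correct and follows essentially the same route as the paper: the pair $(\alpha,\alpha+\beta)$ is handled via the $H$-type structure of $\Lie{g}_\alpha\oplus\Lie{g}_{\alpha+\beta}\oplus\Lie{g}_{2\alpha+\beta}$ together with Corollary~\ref{cor:2.5b}, and the pair $(\alpha,\beta)$ via Lemma~\ref{lem:4.4a}. Your explicit enumeration of which sums of positive roots lie in $\Sigma^+$ merely makes precise the reduction that the paper leaves implicit.
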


\begin{proof}
By Lemma \ref{ciatti-htype}, the algebra $\Lie{g}_\alpha \oplus \Lie{g}_{\alpha+\beta} \oplus \Lie{g}_{2\alpha+\beta}$ is $H$-type.
By Corollary \ref{cor:2.5b}, the restriction of $D\transp $ to this $H$-type algebra is a derivation.
Hence it suffices to show that
\begin{equation*}%\label{eq:4.6}
D\transp [U,X] = [D\transp U, X] + [U, D\transp X]
\end{equation*}
for all $X$ in $\Lie{g}_\alpha$ and all $U$ in $\Lie{g}_\beta$.
The proposition now follows from Lemma \ref{lem:4.4a}.
\end{proof}

Now we describe the symmetric derivations. 

\begin{prop}\label{prop:B2-sym} 
Every derivation $D$ in $\SymDer{n}$ is equal to $\ad(H)$ for some $H$ in $\Lie{a}$.
\end{prop}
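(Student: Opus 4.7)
The plan is to use Lemma \ref{lem:5.2}, which reduces matters to showing that any symmetric $D \in \SymDer{\Lie{n}}$ acts as a real scalar on each of the simple root spaces $\Lie{g}_\alpha$ and $\Lie{g}_\beta$. Since $D$ is symmetric and preserves root spaces, it is diagonalisable on each $\Lie{g}_\alpha$ and $\Lie{g}_\beta$ by real eigenvalues, so it will be enough to show that all eigenvalues on $\Lie{g}_\alpha$ coincide, and similarly for $\Lie{g}_\beta$.

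The first step is to extract information from the $H$-type subalgebra $\Lie{h}_1 := \Lie{g}_\alpha \oplus \Lie{g}_{\alpha+\beta} \oplus \Lie{g}_{2\alpha+\beta}$ (which is $H$-type by Lemma \ref{lemH}, since in $B_2$ we have $2\alpha+\beta = \omega$ and neither $3\alpha+\beta$ nor $\alpha+2\beta$ is a root). The restriction $D\rist{\Lie{h}_1}$ is a grading preserving symmetric derivation of an $H$-type algebra whose centre is $\Lie{g}_{2\alpha+\beta}$; Proposition \ref{prop:2.7} then gives a scalar $2\mu \in \R$ such that
\[
D\rist{\Lie{g}_{2\alpha+\beta}} = 2\mu\, I_{\Lie{g}_{2\alpha+\beta}}.
\]

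The key step is then a direct Leibniz computation against the long root space. Pick a nonzero eigenvector $X \in \Lie{g}_\alpha$ with eigenvalue $\lambda$ and a nonzero eigenvector $U \in \Lie{g}_\beta$ with eigenvalue $\nu$. By Lemma \ref{lem:UXX} (applied with $\gamma = \beta$, $\delta = \alpha$, since $\beta-\alpha$ and $\beta+3\alpha$ are not roots in $B_2$), the element $[X,[X,U]] \in \Lie{g}_{2\alpha+\beta}$ is nonzero. Expanding $D[X,[X,U]]$ using the derivation property and the eigenvector hypotheses yields
\[
(2\lambda + \nu)\,[X,[X,U]] = D[X,[X,U]] = 2\mu\,[X,[X,U]],
\]
so $2\lambda + \nu = 2\mu$. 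Varying the eigenvector $X$ in $\Lie{g}_\alpha$ while keeping $U$ fixed forces all eigenvalues $\lambda$ on $\Lie{g}_\alpha$ to be equal; varying $U$ in $\Lie{g}_\beta$ for fixed $X$ forces all eigenvalues $\nu$ on $\Lie{g}_\beta$ to be equal. Hence $D\rist{\Lie{g}_\alpha}$ and $D\rist{\Lie{g}_\beta}$ are scalar operators, and Lemma \ref{lem:5.2} produces $H \in \Lie{a}$ with $D = \ad(H)$.

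The only nontrivial ingredient is the nonvanishing $[X,[X,U]] \neq 0$, which is exactly the content of Lemma \ref{lem:UXX} and reflects the non-$H$-type ``mixing'' between the short and long root directions in $B_2$; everything else is routine bookkeeping with eigenvalues.
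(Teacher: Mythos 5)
Your proposal is correct and follows essentially the same route as the paper: both hinge on the nonvanishing of $[[U,X],X]\in\Lie{g}_{2\alpha+\beta}$ from Lemma \ref{lem:UXX}, the resulting linear relation among the eigenvalues on $\Lie{g}_\alpha$, $\Lie{g}_\beta$ and $\Lie{g}_{2\alpha+\beta}$, and Lemma \ref{lem:5.2}. The only cosmetic difference is that the paper first subtracts a suitable $\ad(H)$ (via Corollary \ref{cor:2.8}) so that $D$ vanishes on $\Lie{g}_{2\alpha+\beta}$, whereas you carry the scalar $2\mu$ from Proposition \ref{prop:2.7} through the computation.
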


\begin{proof}
By Lemma \ref{lemH}, $\Lie{g}_\alpha \oplus \Lie{g}_{\alpha+\beta} \oplus \Lie{g}_{2\alpha+\beta}$ is an $H$-type algebra.
In light of Corollary~\ref{cor:2.8}, we may assume that $D$ vanishes on $\Lie{g}_{2\alpha+\beta}$, by subtracting $\ad(H)$ for a suitable $H$ in $\Lie{a}$.

The derivation $D$, being symmetric, may be diagonalized with real eigenvalues.
We fix eigenvectors $U$ in $\Lie{g}_\beta$ with eigenvalue $\lambda$ and $X$ in $\Lie{g}_\alpha$ with eigenvalue $\mu$.
Since $D$ is a derivation, 
\begin{equation*}
D[[U,X],X]=(\lambda+2\mu)[[U,X],X].
\end{equation*}

Now $\alpha$ and $\beta$ satisfy the hypotheses of Lemma~\ref{lem:UXX}, and so  $[[U,X],X]$ is nonzero.
Since $D$ vanishes on $\Lie{g}_{2\alpha+\beta}$,
\begin{equation*}%\label{autovalore0}
\lambda+2\mu = 0.
\end{equation*}

We vary the eigenvector $U$, holding $X$ fixed: this shows that $\lambda$ is independent of $X$.
Similarly, $\mu$ is independent of $U$.
By Lemma \ref{lem:5.2}, this implies the proposition.
\end{proof}

\begin{prop}\label{prop:B2-skew} 
The basic derivation identity $(E_{\gamma,\delta})$ holds as $\gamma$ and $\delta$ range over the set $\{\alpha, \beta\}$ of simple roots.
Consequently, every derivation $D$ in $\SkewDer{n}$ is equal to $\ad(Z)$ for some $Z$ in $\Lie{m}$.
\end{prop}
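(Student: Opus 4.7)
The approach is essentially the one used for $A_2$ in Proposition \ref{prop:A2-skew}, adapted because $\Lie{n}$ is no longer $H$-type itself. By Theorem \ref{thm:skew-der-char}, it suffices to verify $(E_{\gamma, \delta})$ as $\gamma$ and $\delta$ range over $\{\alpha, \beta\}$. The diagonal cases $(E_{\alpha, \alpha})$ and $(E_{\beta, \beta})$ are immediate from Corollary \ref{cor:Ealphaalpha}.

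For the mixed identities $(E_{\alpha, \beta})$ and $(E_{\beta, \alpha})$, the key structural fact is that $\alpha - \beta$ is not a root. I would apply the Jacobi identity inside $[[X, \theta Y], V]$ and inside each of the three terms on the right hand side of $(E_{\gamma, \delta})$ to move $V$ past $\theta Y$, noting that all the resulting commutators $[\theta Y, V]$, $[\theta DY, V]$, $[\theta Y, DV]$ vanish. Setting $W = [X, V] \in \Lie{g}_{\alpha+\beta}$ and using the derivation property on $\Lie{n}$, both $(E_{\alpha, \beta})$ and $(E_{\beta, \alpha})$ collapse to the single identity
\[
D[W, \theta Y] = [DW, \theta Y] + [W, \theta DY]
\qquad\forall W \in \Lie{g}_{\alpha+\beta}\quad\forall Y \in \Lie{g}_\alpha \oplus \Lie{g}_\beta.
\]

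Recognising $[W, \theta Y]$ as $J_W Y$ with $J_W$ the operator of Lemma \ref{lemH0} applied to the pair $(\alpha, \beta)$, the target becomes $D J_W = J_{DW} + J_W D$ on $\Lie{g}_\alpha \oplus \Lie{g}_\beta$. To prove it, I would pair against an arbitrary $Y' \in \Lie{g}_\alpha \oplus \Lie{g}_\beta$ and compute, using skew-symmetry of $D$, the formula \eqref{eq:K1}, the derivation property (valid because $[Y, Y'] \in \Lie{g}_{\alpha+\beta} \subset \Lie{n}$), and \eqref{eq:K1} once more on $DW$:
\begin{align*}
\lip D J_W Y, Y'\rip
&= -\lip W, [Y, DY']\rip \\
&= -\lip W, D[Y, Y']\rip + \lip W, [DY, Y']\rip \\
&= \lip J_{DW} Y, Y'\rip + \lip J_W DY, Y'\rip.
\end{align*}
Since the inner product is nondegenerate on $\Lie{g}_\alpha \oplus \Lie{g}_\beta$, this yields the desired identity, and Theorem \ref{thm:skew-der-char} then delivers the conclusion that $D = \ad(Z)$ for some $Z \in \Lie{m}$.

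The main obstacle compared with the $A_2$ case is that Proposition \ref{prop:2.1} cannot be invoked directly: $\Lie{n}$ is not itself $H$-type in $B_2$, and although $\Lie{g}_\alpha \oplus \Lie{g}_{\alpha+\beta} \oplus \Lie{g}_{2\alpha+\beta}$ is $H$-type, its centre is $\Lie{g}_{2\alpha+\beta}$ rather than $\Lie{g}_{\alpha+\beta}$, so the $J_W$ with $W \in \Lie{g}_{\alpha+\beta}$ is not the Clifford operator of that $H$-type structure. The workaround is to observe that all that is really needed is the inner product formula \eqref{eq:K1}, which Lemma \ref{lemH0} supplies in the required generality without any Clifford squaring relation.
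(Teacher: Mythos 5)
Your proof is correct, but for the mixed identities it follows a genuinely different route from the paper's. The paper proves $(E_{\beta,\alpha})$ by a concrete computation: it uses Lemma \ref{lem:UX} to write an arbitrary $W \in \Lie{g}_{\alpha+\beta}$ as $[U,Y]$, pushes brackets around with the Jacobi identity and \eqref{eq:3.3} until everything cancels against the observation that $[DY,\theta Y]+[Y,\theta DY]=0$ when $DY \perp Y$, and then obtains $(E_{\alpha,\beta})$ separately by a duality computation showing the two mixed identities are adjoint to one another. You instead reduce both mixed identities at once to the single relation $DJ_W = J_{DW} + J_W D$ on $\Lie{g}_\alpha \oplus \Lie{g}_\beta$ and prove it by rerunning the adjointness argument behind Proposition \ref{prop:2.1}, correctly observing that this argument needs only the pairing formula \eqref{eq:K1} of Lemma \ref{lemH0} and not the Clifford relation --- an important point, since $\Lie{g}_{\alpha+\beta}$ is not the centre of any of the available $H$-type subalgebras. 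Your version is more uniform (it treats $(E_{\alpha,\beta})$ and $(E_{\beta,\alpha})$ symmetrically, absorbing the paper's separate equivalence step into the single identity for $Y \in \Lie{g}_\alpha \oplus \Lie{g}_\beta$), avoids Lemma \ref{lem:UX} entirely, and, like the paper's argument, carries over verbatim to $BC_2$ since nothing in it uses that $2\alpha$ is not a root; what it costs is only that one must check that \eqref{eq:K1} really is all that Proposition \ref{prop:2.1}'s proof uses, which you do. The reduction itself, the appeal to Corollary \ref{cor:Ealphaalpha} for the diagonal cases, and the final invocation of Theorem \ref{thm:skew-der-char} all match the paper.
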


\begin{proof}
Recall the basic derivation identity $(E_{\gamma,\delta})$, that is, the identity
\[
D[[X,\theta Y],Z] = [[DX,\theta Y, Z] + [[X,\theta DY],Z] + [[X,\theta Y],DZ]
\]
for all $X, Y \in \Lie{g}_\gamma$ and all $Z \in \Lie{g}_\delta$.
Again by Theorem \ref{thm:skew-der-char}, we need to prove $(E_{\gamma,\delta})$ as $\gamma$ and $\delta$ range over $\{\alpha, \beta\}$.
The identities $(E_{\alpha,\alpha})$ and $(E_{\beta,\beta})$ hold by Corollary \ref{cor:Ealphaalpha}.

Now we prove $(E_{\beta, \alpha})$. 
Suppose that $X, Y  \in \Lie{g}_\beta$ and $Z \in \Lie{g}_\alpha$.
Since $\beta - \alpha$ is not a root,
\begin{align*}
&D[[X,\theta Y],Z] - [[DX,\theta Y], Z] - [[X,\theta DY],Z] - [[X,\theta Y],DZ] \\
&\quad= D[[X,Z], \theta Y] - [[DX, Z], \theta Y] - [[X,Z], \theta DY] - [[X, DZ] ,\theta Y] \\ 
&\quad= D[[X,Z], \theta Y] - [D[X, Z], \theta Y] - [[X,Z], \theta DY]  \\ 
&\quad= D[W, \theta Y] - [DW, \theta Y] - [W, \theta DY]  ,
\end{align*}
where $W =  [X,Z] \in \Lie{g}_{\alpha+\beta}$; it will suffice to prove that this is $0$ for all $W \in \Lie{g}_{\alpha+\beta}$ and all $Y \in \Lie{g}_\beta$.
By Lemma \ref{lem:UX}, $\ad(Y)$ maps $\Lie{g}_\alpha$ onto $\Lie{g}_{\alpha+\beta}$, so it will suffice to prove that
\[
D[[U,Y], \theta Y] - [D[U,Y], \theta Y] - [[U,Y], \theta DY] =0
\]
for all $U \in \Lie{g}_\alpha$ and all $Y \in \Lie{g}_\beta$.
Since $\alpha - \beta$ is not a root,  $[[R,S], \theta T] = [R,[S, \theta T]]$ for all $R \in \Lie{g}_\alpha$ and all $S,T \in \Lie{g}_\beta$, whence
\[
\begin{aligned}
&D[[U,Y], \theta Y] - [D[U,Y], \theta Y] - [[U,Y], \theta DY] \\
&\quad=D[[U,Y], \theta Y] - [[DU,Y], \theta Y] -  [[U,DY], \theta Y] - [[U,Y], \theta DY] \\
&\quad=D[U,[Y, \theta Y]] - [DU,[Y, \theta Y]] -  [U,[DY, \theta Y]] - [U,[Y, \theta DY]] \\
&\quad =     \lip \alpha, \beta \rip \|Y\|^2 DU - \lip \alpha, \beta \rip \|Y\|^2 DU %\\
%&\qquad 
 -  [U,[DY, \theta Y]] - [U,[Y, \theta DY]] \\
&\quad =  [ [DY, \theta Y] + [Y, \theta DY]  , U].
\end{aligned}
\]
Now if $X \perp Y$, then $[X, \theta Y] \in \Lie{m}$ and hence 
\[
[X, \theta Y] + [Y, \theta X] = \theta [X, \theta Y] + [Y, \theta X] =  [\theta X, Y] + [Y, \theta X]= 0. 
\]
Applying this with $X$ equal to $DY$ finishes the proof of $(E_{\beta, \alpha})$.

It remains to prove $(E_{\alpha,\beta})$.
Take $X, Y \in \Lie{g}_\alpha$ and $U,Z \in \Lie{g}_\beta$.
Then
\begin{align*}
&\lip D[[X, \theta Y],Z] - [[DX, \theta Y],Z]  - [[X, \theta DY],Z]  - [[X, \theta Y],DZ], U \rip \\
&\quad= -\lip [[X, \theta Y],Z], DU \rip + \lip [DX, \theta Y], [U, \theta Z] \rip  \\
&\qquad  + \lip [X, \theta DY], [U, \theta Z]  \rip  + \lip [X, \theta Y], [U, \theta DZ]  \rip \\
&\quad= \lip [X, \theta Y], [DU,\theta Z] \rip - \lip DX,  [[U, \theta Z], Y] \rip  \\
&\qquad- \lip X, [[U, \theta Z],DY]  \rip  - \lip X, [[U, \theta DZ] ,Y] \rip \\
&\quad=-\lip X, [[DU,\theta Z],Y] \rip + \lip X,  D[[U, \theta Z], Y] \rip  \\
&\qquad - \lip X, [[U, \theta Z],DY]  \rip  - \lip X, [[U, \theta DZ] ,Y] \rip \\
&\quad=  \lip X, D[[U, \theta Z], Y]  - [[DU,\theta Z],Y]    - [[U, \theta Z],DY]  - [[U, \theta DZ] ,Y] \rip.
\end{align*}
This shows that $(E_{\alpha,\beta})$ and $(E_{\beta,\alpha})$ are equivalent, so we are done.

Note that we have not used the fact that $2\alpha$ and $2(\alpha+\beta)$ are not roots, so this argument holds in the $BC_2$ case too.
\end{proof}

This completes our discussion of the algebras with root system $B_2$.
We remind the reader that $C_2$ is the same as $B_2$.
The algebras with root system $G_2$ are covered by Remark \ref{rem:4.2}.
It remains to consider the algebras with root system $BC_2$.

\subsubsection{The case $BC_2$.}
Until further notice, we assume that $\Lie{g}$ has root system $BC_2$.
Denote by $\alpha$ and $\beta$ the simple roots, with $\alpha$ orthogonal to the highest root $\omega$. 
Then $\Sigma^+ = \{\alpha, 2\alpha, \beta, \alpha+\beta, 2\alpha + \beta, 2\alpha + 2\beta\}$ and $\omega = 2\alpha + 2\beta$.

Note that $\{ \pm 2 \alpha, \pm \beta, \pm (2\alpha+\beta), \pm (2\alpha+2\beta)\}$ is a root subsystem of type $B_2$, write $\Lie{n}_{\mathrm{sub}}$  for $\Lie{g}_{\beta} \oplus \Lie{g}_{2\alpha} \oplus \Lie{g}_{2\alpha + \beta} \oplus \Lie{g}_{2\alpha + 2\beta}$.
The results  of the previous subsection apply to the root space preserving derivations of the subalgebra $\Lie{n}_{\mathrm{sub}}$ to give us information about derivations of $\Lie{n}$.

The first step is to establish the analogue of Proposition \ref{pro:B2-sum}.

\begin{prop}\label{prop:BC2-sum}
If $D$ is in $\Der{n}$, then its transpose $D\transp $ is also in $\Der{n}$.
\end{prop}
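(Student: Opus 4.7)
The plan is to verify, for each pair $(\gamma,\delta)$ of positive roots with $\gamma+\delta\in\Sigma^+$, that
$D\transp[X,Y]=[D\transp X,Y]+[X,D\transp Y]$
for all $X\in\Lie{g}_\gamma$ and $Y\in\Lie{g}_\delta$. In the $BC_2$ root system the pairs to treat are $(\alpha,\alpha)$, $(\alpha+\beta,\alpha+\beta)$, $(\alpha,\beta)$, $(\alpha,\alpha+\beta)$, $(\beta,2\alpha)$ and $(\beta,2\alpha+\beta)$; every other sum of two positive roots fails to be a root, so the identity is trivial there.

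The easy pairs are handled just as in the $B_2$ case. Since $\Lie{g}_\alpha\oplus\Lie{g}_{2\alpha}$ and $\Lie{g}_{\alpha+\beta}\oplus\Lie{g}_{2\alpha+2\beta}$ are Heisenberg $H$-type algebras by Lemma \ref{lemH}, Corollary \ref{cor:2.5b} settles the pairs $(\alpha,\alpha)$ and $(\alpha+\beta,\alpha+\beta)$. The pair $(\alpha,\beta)$ satisfies the hypotheses of Lemma \ref{lem:4.4a}, since neither $\alpha-\beta$ nor $\alpha+2\beta$ is a root of $BC_2$, and that lemma delivers the identity directly. The pairs $(\beta,2\alpha)$ and $(\beta,2\alpha+\beta)$ lie inside the subalgebra $\Lie{n}_{\mathrm{sub}}$, whose root system $\{\pm 2\alpha,\pm\beta,\pm(2\alpha+\beta),\pm(2\alpha+2\beta)\}$ is of type $B_2$ (with simple roots $\beta$ and $2\alpha$); since $D\rist{\Lie{n}_{\mathrm{sub}}}$ is a root space preserving derivation of this $B_2$ Iwasawa algebra, Proposition \ref{pro:B2-sum} ensures that its transpose is also a derivation, which disposes of both.

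The main obstacle is the remaining pair $(\alpha,\alpha+\beta)\to 2\alpha+\beta$: here $\Lie{g}_\alpha\oplus\Lie{g}_{\alpha+\beta}\oplus\Lie{g}_{2\alpha+\beta}$ is only ``close to'' $H$-type (as noted in Lemma \ref{lemH}, $J_Z^2$ carries the factor $\lip 2\alpha+\beta,\alpha\rip\neq 1$), and none of the earlier tools applies off the shelf. The attack on this case is Corollary \ref{cor:3.7}: its proof uses only the local derivation identity on the triple, which is available from $D\in\Der{\Lie{n}}$, and both $2\gamma+\delta=3\alpha+\beta$ and $\gamma+2\delta=3\alpha+2\beta$ are easily checked not to be roots. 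What the corollary requires is that $D\rist{\Lie{g}_{2\alpha+\beta}}$ be symmetric.

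To arrange this last hypothesis, I will exploit the $B_2$ machinery already proved. Applying Propositions \ref{pro:B2-sum} and \ref{prop:B2-skew} to the root space preserving derivation $D\rist{\Lie{n}_{\mathrm{sub}}}$ of the $B_2$ Iwasawa algebra, its skew-symmetric part coincides with $\ad(Z)\rist{\Lie{n}_{\mathrm{sub}}}$ for some $Z\in\Lie{m}$. Because $\theta Z=Z$ we have $\ad(Z)\transp=-\ad(Z)$, and $\ad(Z)$ itself is a derivation of all of $\Lie{g}$; so replacing $D$ by $D-\ad(Z)$ does not change whether $D\transp$ is a derivation of $\Lie{n}$, and after this replacement $D\rist{\Lie{g}_{2\alpha+\beta}}$ is symmetric. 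Corollary \ref{cor:3.7} then yields the identity for $D\transp$ on the last pair, completing the proof.
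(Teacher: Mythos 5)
Your proof is correct and follows essentially the same route as the paper: reduce to the pairs of positive roots whose sum is a root, dispose of all but $(\alpha,\alpha+\beta)$ via $H$-type subalgebras and Corollary \ref{cor:2.5b}, Lemma \ref{lem:4.4a}, and Proposition \ref{pro:B2-sum} applied to $\Lie{n}_{\mathrm{sub}}$, then normalise $D$ to be symmetric on $\Lie{g}_{2\alpha+\beta}$ by subtracting a skew-symmetric $\ad(Z)$ and invoke Corollary \ref{cor:3.7}. The only (immaterial) difference is that the paper obtains $Z$ from the rank-one result (Proposition \ref{rango 1}) applied to $\Lie{g}_{2\alpha+\beta}$, whereas you extract it from the $B_2$ analysis of $\Lie{n}_{\mathrm{sub}}$; both yield the hypothesis needed for Corollary \ref{cor:3.7}.
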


\begin{proof}
%By Propositions \ref {pro:B2-sum}, \ref{prop:B2-sym}, and \ref{prop:4.10}, $D$ coincides with $\ad(H+Z)$ on $\Lie{n}_{\mathrm{sub}}$, where $H \in \Lie{a}$ and $Z \in \Lie{m}$. 
%By subtracting $\ad(H+Z))$ if necessary, we may suppose that $D$ vanishes on $\Lie{n}_{\mathrm{sub}}$.
By linearity, it suffices to show that 
\begin{equation}\label{eq:der-gamma-delta}
D\transp [X,Y] = [D\transp X, Y] + [X, D\transp Y] 
\quad\forall X \in \Lie{g}_\gamma \quad\forall Y \in \Lie{g}_\delta, 
\end{equation}
as $\gamma$ and $\delta$ range over $\Sigma^+$.
As $D$ and hence also $D\transp $ preserve root spaces, this is trivial unless $\gamma+\delta$ is a root.
Moreover, by Corollary \ref{cor:2.5b}, the restrictions of $D\transp $ to  the $H$-type algebras $\Lie{g}_{\beta} \oplus \Lie{g}_{\alpha+ \beta} \oplus \Lie{g}_{2\alpha + \beta} \oplus \Lie{g}_{2\alpha + 2\beta}$ and $\Lie{g}_\alpha \oplus \Lie{g}_{2\alpha}$ are derivations, and by Proposition \ref{pro:B2-sum}, the restriction of $D\transp $ to  $\Lie{g}_{\beta} \oplus \Lie{g}_{2\alpha} \oplus \Lie{g}_{2\alpha + \beta} \oplus \Lie{g}_{2\alpha + 2\beta}$ is a derivation.

Thus it suffices to prove \eqref{eq:der-gamma-delta} when $(\gamma,\delta)$ is either $(\alpha,\beta)$ or $(\alpha,\alpha + \beta)$.
Lemma \ref{lem:4.4a} takes care of the case when $\gamma = \alpha$ and $\delta=\beta$.

Since $2(2\alpha+\beta)$ is not a root, Proposition \ref{rango 1} implies that there exists $Z$ in $\Lie{m}^{2\alpha+\beta}$ that agrees with the skew-symmetric part of $D$ on  $\Lie{g}_{2\alpha+\beta}$.
By subtracting $\ad(Z)$ from $D$ if necessary, we may suppose that $D$ is symmetric on $\Lie{g}_{2\alpha+\beta}$.
Now Corollary \ref{cor:3.7} gives \eqref{eq:der-gamma-delta}.
\end{proof}

Once again, we consider the symmetric derivations.

\begin{prop}\label{pro:BC2-sym} 
If the root system of $\Lie{g}$ is $BC_2$, then every derivation in $\SymDer{n}$ is given by $\ad(H)$ for some $H$ in $\Lie{a}$.
\end{prop}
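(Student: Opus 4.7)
The plan is to leverage the $B_2$ subsystem sitting inside $BC_2$, reducing the assertion to a clean bracket computation that uses Lemma \ref{lem:UXX}.

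First, I would observe that $\{\pm 2\alpha, \pm\beta, \pm(2\alpha+\beta), \pm(2\alpha+2\beta)\}$ is a subsystem of $\Sigma$ of type $B_2$, with $\beta$ the short simple root and $2\alpha$ the long simple root. Consequently the subalgebra $\Lie{g}^{\{2\alpha,\beta\}}$ is a simple rank-two Lie algebra with root system $B_2$, and its Iwasawa nilpotent radical is
\[
\Lie{n}_{\mathrm{sub}} = \Lie{g}_\beta \oplus \Lie{g}_{2\alpha} \oplus \Lie{g}_{2\alpha+\beta} \oplus \Lie{g}_{2\alpha+2\beta}.
\]
Since $D$ preserves root spaces, its restriction $D\rist{\Lie{n}_{\mathrm{sub}}}$ is a symmetric root-space preserving derivation of $\Lie{n}_{\mathrm{sub}}$. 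Proposition \ref{prop:B2-sym}, applied to $\Lie{g}^{\{2\alpha,\beta\}}$, supplies $H \in \Lie{a}$ with $D\rist{\Lie{n}_{\mathrm{sub}}} = \ad(H)\rist{\Lie{n}_{\mathrm{sub}}}$. Replacing $D$ by $D - \ad(H)$, which is again a symmetric root-space preserving derivation of $\Lie{n}$, we may assume that $D$ vanishes on each of the four root spaces $\Lie{g}_\beta$, $\Lie{g}_{2\alpha}$, $\Lie{g}_{2\alpha+\beta}$, $\Lie{g}_{2\alpha+2\beta}$.

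By Lemma \ref{lem:5.2}, it then suffices to show that $D$ vanishes on $\Lie{g}_\alpha$ (it already vanishes on $\Lie{g}_\beta$). Fix an eigenvector $X \in \Lie{g}_\alpha$ of $D$ with eigenvalue $\lambda \in \R$ and any nonzero $U \in \Lie{g}_\beta$. The pair $(\gamma,\delta) = (\beta,\alpha)$ satisfies the hypotheses of Lemma \ref{lem:UXX}: both $\gamma+\delta = \alpha+\beta$ and $\gamma+2\delta = 2\alpha+\beta$ are positive roots, while $\gamma-\delta$ and $\gamma+3\delta = 3\alpha+\beta$ are not roots. Hence $[[U,X],X]$ is a nonzero element of $\Lie{g}_{2\alpha+\beta}$. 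Since $D$ is a derivation with $DU = 0$ and $DX = \lambda X$, a short computation gives
\[
D[[U,X],X] = [[U,X],\lambda X] + [[U,\lambda X],X] = 2\lambda [[U,X],X].
\]

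On the other hand, $D$ vanishes on $\Lie{g}_{2\alpha+\beta}$, so $2\lambda[[U,X],X] = 0$, and nontriviality of $[[U,X],X]$ forces $\lambda = 0$. As the eigenvector was arbitrary, $D\rist{\Lie{g}_\alpha} = 0$, and Lemma \ref{lem:5.2} gives $D = 0$, so the original derivation equals $\ad(H)$. The main technical point is identifying the $B_2$ subsystem and checking that Proposition \ref{prop:B2-sym} applies to the subalgebra $\Lie{g}^{\{2\alpha,\beta\}}$; once this reduction is in place, the bracket identity supplied by Lemma \ref{lem:UXX} finishes matters essentially for free.
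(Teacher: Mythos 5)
Your proposal is correct and follows essentially the same route as the paper: restrict to the $B_2$ subalgebra $\Lie{n}_{\mathrm{sub}}$, apply Proposition \ref{prop:B2-sym} and subtract $\ad(H)$, then use the nonvanishing of $[[U,X],X]$ from Lemma \ref{lem:UXX} to force the eigenvalues on $\Lie{g}_\alpha$ to be zero. Your explicit verification of the hypotheses of Lemma \ref{lem:UXX} for the pair $(\beta,\alpha)$ is a welcome detail the paper leaves implicit.
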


\begin{proof}
By Proposition \ref{prop:B2-sym}, the restriction of $D$ to $\Lie{n}_{\mathrm{sub}}$ is given by $\ad(H)$ for some $H$ in $\Lie{a}$.
By subtracting $\ad(H)$ if necessary we may suppose that $D$ vanishes on $\Lie{n}_{\mathrm{sub}}$; it will then suffice to show that $D$ is trivial.

To do this, we pick an eigenvector $X$ of $D$ in $\Lie{g}_{\alpha}$ with eigenvalue $\lambda$ and $U \in \Lie{g}_{\beta} \setminus\{0\}$.
Since $D$ is a derivation and $DU = 0$
\[
D[[U,X],X] = 2 \lambda [[U,X],X].
\]
However, $D[[U,X],X]=0$, since $[[U,X],X]$ lies in $\Lie{g}_{2\alpha+\beta} \subset \Lie{n}_{\mathrm{sub}}$.
Since $[[U,X],X] \neq 0$ by Lemma \ref{lem:UXX},  $\lambda=0$, and $D$ is trivial on $\Lie{g}_\alpha$.
Since $D$ is also trivial on $\Lie{g}_\beta$, it is trivial on $\Lie{g}_{\alpha+\beta}$, and hence trivial on all the root spaces.
\end{proof}

We conclude our discussion of the rank $2$ case with a description of the skew-symmetric derivations.

\begin{prop}\label{prop:BC2-skew}
 The basic derivation identity $(E_{\gamma,\delta})$ holds as $\gamma$ and $\delta$ range over the set $\{\alpha, \beta\}$ of simple roots.
Consequently, every derivation $D$ in $\SkewDer{n}$ is equal to $\ad(Z)$ for some $Z$ in $\Lie{m}$.
\end{prop}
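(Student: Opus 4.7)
The identities $(E_{\alpha,\alpha})$ and $(E_{\beta,\beta})$ are immediate from Corollary~\ref{cor:Ealphaalpha}, so the content of the proposition reduces to the mixed identities $(E_{\alpha,\beta})$ and $(E_{\beta,\alpha})$. The closing computation of the proof of Proposition~\ref{prop:B2-skew} shows that, for any skew-symmetric $D$, the identity $(E_{\alpha,\beta})$ is equivalent to $(E_{\beta,\alpha})$; that pairing argument uses only the skew-symmetry of $D$ and the identity $\ad(Y)\transp=-\ad(\theta Y)$, so it applies unchanged in $BC_2$. It therefore suffices to prove $(E_{\beta,\alpha})$; once we do, Theorem~\ref{thm:skew-der-char} immediately delivers $D=\ad(Z)$ for some $Z\in\Lie{m}$.

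My plan is to transplant the $B_2$ proof of $(E_{\beta,\alpha})$. Inspection of the positive root list $\{\alpha, 2\alpha, \beta, \alpha+\beta, 2\alpha+\beta, 2\alpha+2\beta\}$ shows that neither $\alpha-\beta$ nor $\alpha+2\beta$ lies in $\Sigma$, which is exactly the input used in $B_2$. For $X,Y\in\Lie{g}_\beta$ and $Z\in\Lie{g}_\alpha$, the fact that $\alpha-\beta\notin\Sigma$ gives $[\theta Y,Z]=0$, so one Jacobi step reduces the defect
\[
D[[X,\theta Y],Z] - [[DX,\theta Y],Z] - [[X,\theta DY],Z] - [[X,\theta Y],DZ]
\]
to $D[W,\theta Y] - [DW,\theta Y] - [W,\theta DY]$, where $W=[X,Z]\in\Lie{g}_{\alpha+\beta}$. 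Lemma~\ref{lem:UX} (whose two hypotheses are precisely the non-root conditions just noted) shows that $\ad(Y)\colon\Lie{g}_\alpha\to\Lie{g}_{\alpha+\beta}$ is surjective, so we may write $W=[U,Y]$ with $U\in\Lie{g}_\alpha$.

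Expanding $D[U,Y]=[DU,Y]+[U,DY]$ and using the Jacobi identity together with $[\theta Y,U]=0$ collapses every quadruple bracket into a double bracket against $[Y,\theta Y]=-\|Y\|^2 H_\beta\in\Lie{a}$; the two scalar terms of the form $\langle\alpha,\beta\rangle\|Y\|^2 DU$ cancel and what remains is $-[U,\,[DY,\theta Y]+[Y,\theta DY]]$. Skew-symmetry of $D$ forces $DY\perp Y$ in $\Lie{g}_\beta$, and the observation recorded just before \eqref{eq:3.3} shows that for orthogonal vectors $X_1,X_2$ in a common root space one has $[X_1,\theta X_2]\in\Lie{m}$, whence $\theta[X_1,\theta X_2]=[X_1,\theta X_2]$ and therefore $[X_1,\theta X_2]+[X_2,\theta X_1]=0$. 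This kills the remaining bracket and yields $(E_{\beta,\alpha})$. The only delicate point in this plan is checking that the two root-theoretic inputs $\alpha-\beta\notin\Sigma$ and $\alpha+2\beta\notin\Sigma$ actually hold in $BC_2$ under the chosen simple-root labelling, which is exactly what the orthogonality of $\alpha$ to the highest root $\omega=2\alpha+2\beta$ guarantees.
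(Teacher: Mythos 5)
Your proposal is correct and follows essentially the same route as the paper: the paper's proof simply invokes Proposition~\ref{prop:B2-skew} together with the remark at the end of its proof that the argument nowhere uses that $2\alpha$ and $2(\alpha+\beta)$ are non-roots, and then applies Theorem~\ref{thm:skew-der-char}. You have merely spelled out the transplantation (checking $\alpha-\beta,\alpha+2\beta\notin\Sigma$ and re-running the $(E_{\beta,\alpha})$ computation and the duality with $(E_{\alpha,\beta})$), which is exactly what the paper's citation compresses.
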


\begin{proof}
This follows from Proposition \ref{prop:B2-skew}, which also holds in the root system $BC_2$, and Theorem \ref{thm:skew-der-char}.
\end{proof}

\subsection{The general case}
Now we prove Theorem \ref{thm:4.1}.
Henceforth, $\Lie{g}$ denotes a real simple Lie algebra of rank at least $3$, and $\Lie{n}$ is an Iwasawa nilpotent subalgebra of $\Lie{g}$.

\begin{prop}\label{prop:general-sum}
 Suppose that $D$ is a derivation of $\Lie{n}$.
 Then $D\transp $ is also a derivation of $\Lie{n}$.
\end{prop}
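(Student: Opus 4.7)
The plan is to reduce the claim to the rank one and rank two cases already handled in Proposition \ref{rango 1} and Propositions \ref{prop:A2-sum}, \ref{pro:B2-sum}, \ref{prop:BC2-sum}. First I would observe that because $D$ preserves each root space $\Lie{g}_\alpha$ and distinct root spaces are mutually orthogonal, the transpose $D\transp$ preserves each root space as well, so $D\transp$ is automatically root space preserving. By bilinearity it therefore suffices to verify
\[
D\transp [X,Y] = [D\transp X, Y] + [X, D\transp Y]
\]
for $X \in \Lie{g}_\gamma$ and $Y \in \Lie{g}_\delta$ as $\gamma, \delta$ range over $\Sigma^+$; the case $\gamma+\delta \notin \Sigma^+$ is trivial since both sides vanish.

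Assume then that $\gamma+\delta \in \Sigma^+$, and consider the subalgebra $\Lie{n}^{\{\gamma,\delta\}} = \sum_{\epsilon \in \Sigma^+ \cap (\R\gamma+\R\delta)} \Lie{g}_\epsilon$, which is the Iwasawa nilpotent subalgebra of a simple subalgebra of $\Lie{g}$ of real rank one (when $\gamma$ and $\delta$ are proportional) or of real rank two. The restriction $D_0$ of $D$ to $\Lie{n}^{\{\gamma,\delta\}}$ is a root space preserving derivation of $\Lie{n}^{\{\gamma,\delta\}}$, and root space preservation together with orthogonality of root spaces ensures that the restriction of $D\transp$ to $\Lie{n}^{\{\gamma,\delta\}}$ coincides with $D_0\transp$. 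It therefore suffices to show that $D_0\transp$ is a derivation of $\Lie{n}^{\{\gamma,\delta\}}$, which is a statement about real rank at most two.

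In the rank one case, Proposition \ref{rango 1} supplies a decomposition $D_0 = D_0^s + D_0^a$ with $D_0^s \in \SymDer{n^{\{\gamma,\delta\}}}$ and $D_0^a \in \SkewDer{n^{\{\gamma,\delta\}}}$, and both summands are derivations, so $D_0\transp = D_0^s - D_0^a$ is a derivation. In the rank two case I would apply Proposition \ref{prop:A2-sum}, \ref{pro:B2-sum} or \ref{prop:BC2-sum} according as the rank two root subsystem generated by $\gamma$ and $\delta$ is of type $A_2$, $B_2$ or $BC_2$; the $G_2$ case is handled by Remark \ref{rem:4.2}, since there every root space preserving derivation already lies in $\ad(\Lie{m} \oplus \Lie{a})$, and this space is closed under transposition. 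There is no substantive obstacle; the proposition is essentially a bookkeeping synthesis of the previously established rank one and rank two cases, the only point requiring care being to verify that the classification of rank two root subsystems inside $\Sigma$ exhausts all pairs $\{\gamma,\delta\}$ whose sum is a root.
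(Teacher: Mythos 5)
Your proposal is correct and follows essentially the same route as the paper: the paper's proof is a two-sentence reduction to the subalgebras $\Lie{n}^{\{\gamma,\delta\}}$ of rank at most two, where the result has already been established, and you carry out exactly this reduction, merely filling in the routine details (that $D\transp$ is root space preserving, that restriction commutes with transposition, and the case-by-case appeal to Proposition~\ref{rango 1}, Propositions~\ref{prop:A2-sum}, \ref{pro:B2-sum}, \ref{prop:BC2-sum} and Remark~\ref{rem:4.2}).
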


\begin{proof}
By linearity, this follows provided that 
\[
D\transp [X,Y] = [D\transp X,Y] + [X,D\transp Y] 
\]
for all $X \in \Lie{g}_\gamma$ and all $Y \in \Lie{g}_\delta$ where $\gamma$ and $\delta$ range over $\Sigma^+$.
This is obvious if $\gamma+\delta$ is not a root, while if $\gamma+\delta$ is a root, then it follows by restricting $D$ to $\Lie{n}^{\{\gamma,\delta\}}$.
\end{proof}

\begin{prop}\label{prop:general-sym}
 Suppose that $D$ is a symmetric derivation of $\Lie{n}$.
 Then $D$ lies in $\ad(\Lie{a})$.
\end{prop}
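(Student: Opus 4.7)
The plan is to apply Lemma \ref{lem:5.2}: once I exhibit a real scalar $\lambda_\alpha$ such that $D$ acts on every simple root space $\Lie{g}_\alpha$ as multiplication by $\lambda_\alpha$, the simple roots form a basis of the dual of $\Lie{a}$, so there is a unique $H \in \Lie{a}$ with $\alpha(H) = \lambda_\alpha$ for every $\alpha \in \Delta$, and then $D$ and $\ad(H)$ agree on the generating set $\Lie{g}_1 = \sum_{\alpha \in \Delta} \Lie{g}_\alpha$ of $\Lie{n}$ (Proposition \ref{prop:strat}), hence everywhere.

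To produce these scalars I will reduce to rank two. Fix $\alpha \in \Delta$. Because $\Lie{g}$ is simple of rank at least three, its Dynkin diagram is connected with at least three nodes, so $\alpha$ has some neighbour $\beta \in \Delta$, meaning $\lip \alpha, \beta \rip \neq 0$. Then $\Sigma \cap \rspan\{\alpha, \beta\}$ is an indecomposable rank-two root system, hence of type $A_2$, $B_2$, or $BC_2$ (the triple-bond type $G_2$ cannot occur as the sub-root-system generated by two simple roots inside a higher-rank simple algebra, since no such diagram carries a triple edge), and $\Lie{n}^{\{\alpha, \beta\}}$ is the Iwasawa nilpotent subalgebra of the rank-two simple subalgebra $\Lie{g}^{\{\alpha, \beta\}}$. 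Since $D$ preserves root spaces it preserves $\Lie{n}^{\{\alpha, \beta\}}$, and the restriction $D\rist{\Lie{n}^{\{\alpha, \beta\}}}$ is again a symmetric root space preserving derivation.

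At that point I simply invoke the appropriate rank-two result, namely Proposition \ref{prop:A2-sym}, Proposition \ref{prop:B2-sym}, or Proposition \ref{pro:BC2-sym}. Each of these says that $D\rist{\Lie{n}^{\{\alpha, \beta\}}} = \ad(H_{\alpha, \beta})$ for some $H_{\alpha, \beta} \in \Lie{a}$, so in particular $D$ multiplies every element of $\Lie{g}_\alpha$ by the single scalar $\lambda_\alpha := \alpha(H_{\alpha, \beta})$. Running this argument as $\alpha$ ranges over $\Delta$ supplies exactly the hypothesis needed by Lemma \ref{lem:5.2}, and the proposition follows.

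Once the rank-two propositions are in hand there is no serious obstacle; the only small point worth flagging is that a simple root space $\Lie{g}_\alpha$ need not be one-dimensional, but the rank-two conclusions yield a scalar (not merely symmetric) action on the whole of $\Lie{g}_\alpha$, which is precisely what Lemma \ref{lem:5.2} demands. The rest is bookkeeping over simple roots using the connectedness of the Dynkin diagram.
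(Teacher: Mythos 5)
Your proof is correct and follows essentially the same route as the paper: the paper's own (very terse) argument is precisely to restrict to rank-two subalgebras to see that $D$ acts by a scalar on each simple root space and then invoke Lemma \ref{lem:5.2}. Your version merely spells out the details (choice of a Dynkin neighbour $\beta$, exclusion of $G_2$, and the appeal to Propositions \ref{prop:A2-sym}, \ref{prop:B2-sym} and \ref{pro:BC2-sym}) that the paper leaves implicit.
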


\begin{proof}
Again, by restricting to rank two subalgebras, we may show that $D$ acts as a scalar on each root space. 
By Lemma \ref{lem:5.2}, $D \in \ad(\Lie{a})$.
\end{proof}

\begin{prop}\label{prop:general-skew}
 Suppose that $D$ is a skew-symmetric derivation of $\Lie{n}$.
 Then $D$ lies in $\ad(\Lie{m})$.
\end{prop}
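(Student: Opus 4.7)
The plan is to apply Theorem \ref{thm:skew-der-char}, which reduces the problem to verifying the identity $(E_{\gamma,\delta})$ for every pair of simple roots $\gamma, \delta \in \Delta$; once these are in hand, the theorem directly produces a $W \in \Lie{m}$ with $D = \ad(W)$.

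I would first dispatch the diagonal case $\gamma = \delta$, which is already recorded in Corollary \ref{cor:Ealphaalpha}, applied to the rank one subalgebra $\Lie{g}^{\{\gamma\}}$.

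Next, I would treat orthogonal simple roots $\gamma$ and $\delta$. The standard theory of simple root systems shows that neither $\gamma + \delta$ nor $\delta - \gamma$ is a root, so that $[\Lie{g}_{\gamma}, \Lie{g}_\delta]$ and $[\Lie{g}_{-\gamma}, \Lie{g}_\delta]$ both vanish. For $X, Y \in \Lie{g}_\gamma$ and $V \in \Lie{g}_\delta$, the Jacobi identity then gives
\[
[[X,\theta Y], V] = [X, [\theta Y, V]] - [\theta Y, [X, V]] = 0,
\]
and the same reasoning, applied with $DX$, $DY$, and $DV$ in place of $X$, $Y$, and $V$ (each of which remains in its original root space since $D$ is root space preserving), shows that every term of $(E_{\gamma,\delta})$ vanishes individually. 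So $(E_{\gamma,\delta})$ collapses to $0 = 0$ in the orthogonal case.

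The remaining case is that of two simple roots $\gamma \neq \delta$ with $\lip\gamma,\delta\rip \neq 0$. These generate a rank two irreducible root subsystem which, since $\Lie{g}$ is simple of rank at least three, must be of type $A_2$, $B_2$, or $BC_2$ — type $G_2$ is excluded because it does not occur as a proper rank two subsystem of any larger irreducible root system. The restriction $D\rist{\Lie{n}^{\{\gamma,\delta\}}}$ is a skew-symmetric root space preserving derivation of the Iwasawa nilpotent subalgebra of the rank two simple subalgebra $\Lie{g}^{\{\gamma,\delta\}}$, and Propositions \ref{prop:A2-skew}, \ref{prop:B2-skew}, and \ref{prop:BC2-skew} supply $D\rist{\Lie{n}^{\{\gamma,\delta\}}} = \ad(W_{\gamma,\delta})$ for some $W_{\gamma,\delta} \in \Lie{m}^{\{\gamma,\delta\}}$, from which $(E_{\gamma,\delta})$ is immediate since all the elements appearing in it lie in $\Lie{g}^{\{\gamma,\delta\}}$.

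I do not anticipate a serious obstacle. The one step that might at first look worrying is the orthogonal case, since it lies outside the scope of the rank two propositions already proved; but it dissolves by a direct Jacobi computation. Everything else is a clean reduction to the rank two work already carried out, and the final assembly is purely an invocation of Theorem \ref{thm:skew-der-char}.
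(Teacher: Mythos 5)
Your proof is correct and follows essentially the same route as the paper: verify the identities $(E_{\gamma,\delta})$ for simple roots by restricting to the rank one and rank two subalgebras $\Lie{n}^{\{\gamma,\delta\}}$ and then invoke Theorem \ref{thm:skew-der-char}. You are in fact more explicit than the paper, which compresses the whole argument into one sentence; your separate treatment of orthogonal simple roots and your exclusion of $G_2$ as a subsystem are correct details that the paper leaves implicit.
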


\begin{proof}
Let $D$ be a skew-symmetric root space preserving derivation of $\Lie{n}$.
Again, by restricting to rank two subalgebras, we may show that $D$ satisfies the basic derivation identity $(E_{\gamma,\delta})$ whenever $\gamma$ and $\delta$ are positive roots.
Hence $D \in \ad(\Lie{m})$ by Theorem \ref{thm:skew-der-char}.
\end{proof}

\end{document}